\newcommand{\nc}{\newcommand}
\numberwithin{equation}{section}
\newenvironment{red}{\relax\color{red}}{\relax}
\newenvironment{blue}{\relax\color{blue}}{\hspace*{.5ex}\relax}
\newenvironment{purple}{\relax\color{blue}}{\hspace*{.5ex}\relax}
\newenvironment{magenta}{\relax\color{magenta}}{\hspace*{.5ex}\relax}
\newenvironment{yellow}{\relax\color{yellow}}{\hspace*{.5ex}\relax}
\newcommand{\bep}{\begin{purple}}
\newcommand{\eep}{\end{purple}}
\newcommand{\ber}{\begin{red}}
\newcommand{\er}{\end{red}}
\newcommand{\beb}{\begin{blue}}
\newcommand{\eb}{\end{blue}}
\newcommand{\bem}{\begin{magenta}}
\newcommand{\eem}{\end{magenta}}
\newcommand{\bey}{\begin{yellow}}
\newcommand{\eey}{\end{yellow}}
\newcommand{\bero}[1]{\begin{red}{}\marginnote{\fbox{\scshape\lowercase{O}}}%
#1}
\newcommand{\bebo}[1]{\begin{blue}{}\marginnote{\fbox{\scshape\lowercase{O}}}%
#1}
\newcommand{\berE}[1]{\begin{red}{}\marginnote{\fbox{\scshape\lowercase{E}}}%
#1}
\nc{\hs}{\hspace*}
\nc{\ms}{\mspace}
\nc{\qR}[1]{\ttq_{\mspace{-2mu}\raisebox{-.8ex}{${\scriptstyle{#1}}$}}}
\theoremstyle{plain}
\newtheorem{lemma}{Lemma}[section]
\newtheorem{proposition}[lemma]{Proposition}
\newtheorem{theorem}[lemma]{Theorem}
\newtheorem{corollary}[lemma]{Corollary}
\theoremstyle{definition}
\newtheorem{remark}[lemma]{Remark}
\newtheorem{example}[lemma]{Example}
\newtheorem{definition}[lemma]{Definition}
\newtheorem{assumption}[lemma]{Assumption}
\renewcommand{\le}{\leqslant}
\renewcommand{\ge}{\geqslant}
\newcommand{\Opd}{\mathop{\prod}}
\newcommand{\seteq}{\mathbin{:=}}
\newcommand{\ev}{{\operatorname{ev}}}
\newcommand{\tens}{\mathop\otimes}
\newcommand{\g}{\mathfrak{g}}
\newcommand{\n}{\mathfrak{n}}
\newcommand{\Q}{\mathbb{Q}}
\newcommand{\Z}{\mathbb{Z}\ms{1mu}}
\newcommand{\al}{{\ms{1mu}\alpha}}
\newcommand{\la}{\lambda}
\newcommand{\be}{{\ms{1mu}\beta}}
\newcommand{\wt}{{\rm wt}}
\newcommand{\Seq}{{\rm Seq}}
\newcommand{\het}{{\rm ht}}
\newcommand{\llf}{(\hspace{-0.7ex}(}
\newcommand{\LLf}{\left(\hspace{-1ex}\left(}
\newcommand{\rrf}{)\hspace{-0.7ex})}
\newcommand{\RRf}{\right)\hspace{-1ex}\right)}
\newcommand{\pair}[1]{  \llf #1 \rrf  }
\newcommand{\Bpair}[1]{  \LLf #1 \RRf  }
\newcommand{\ii}{ \textbf{\textit{i}}}
\newcommand{\jj}{ \textbf{\textit{j}}}
\newcommand{\End}{\operatorname{End}}
\newcommand{\hI}{\widehat{I}}
\newcommand{\hcalA}{\widehat{\calA}}
\newcommand{\sfC}{\mathsf{C}}
\newcommand{\sfF}{\mathsf{F}}
\newcommand{\sfe}{\mathsf{e}}
\newcommand{\sfD}{\mathsf{D}}
\newcommand{\sfP}{\mathsf{P}}
\newcommand{\sfR}{\mathsf{R}}
\newcommand{\sfQ}{\mathsf{Q}}
\newcommand{\sfW}{\mathsf{W}}
\newcommand{\sff}{\mathsf{f}}
\newcommand{\sfd}{\mathsf{d}}
\newcommand{\bbA}{\mathbb{A}}
\newcommand{\bfk}{\mathbf{k}}
\newcommand{\bfZ}{\mathbf{Z}}
\newcommand{\bfg}{\mathbf{g}}
\newcommand{\bfd}{\mathbf{d}}
\newcommand{\calU}{\mathcal{U}}
\newcommand{\calA}{\mathcal{A}}
\newcommand{\calF}{\mathcal{F}}
\newcommand{\calB}{\mathcal{B}}
\newcommand{\calK}{\mathcal{K}}
\newcommand{\calP}{\mathcal{P}}
\newcommand{\scrC}{\mathscr{C}}
\newcommand{\ttP}{\mathtt{P}}
\newcommand{\ttB}{\mathtt{B}}
\newcommand{\ttb}{\mathtt{b}}
\newcommand{\ttx}{\mathtt{x}}
\newcommand{\tty}{\mathtt{y}}
\newcommand{\ttz}{\mathtt{z}}
\newcommand{\ttd}{\mathtt{d}}
\newcommand{\ttq}{{\ms{1mu}\mathtt{q}\ms{1mu}}}
\newcommand{\ttJ}{\mathtt{J}}
\newlength{\mylength}
\newcommand*{\para}{%
  \rlap{\rotatebox{-30}{\rule[.05ex]{.4pt}{.77em}}}%
  \kern.04em%
  \rlap{\kern.36em\raisebox{0.649519052835em}{\rule{.6em}{.4pt}}}%
  \rule{.6em}{.4pt}\kern-.04em%
  \rotatebox{-30}{\rule[.05ex]{.4pt}{.77em}}}
\newcommand{\diag}{\mathrm{diag}}
\newcommand{\rmQ}{\mathrm{Q}}
\newcommand{\wl}{\mathsf{P}}
\newcommand{\rl}{\sfQ}
\newcommand{\weyl}{\sfW}
\newcommand{\lan}{\langle}
\newcommand{\ran}{\rangle}
\newcommand{\isoto}[1][]{\mathop{\xrightarrow%
[{\raisebox{.3ex}[0ex][.3ex]{$\scriptstyle{#1}$}}]%
{{\raisebox{-.6ex}[0ex][-.6ex]{$\mspace{2mu}\sim\mspace{2mu}$}}}}}
\newcommand{\ee}{\end{enumerate}}
\newcommand{\bitem}{\begin{itemize}}
\newcommand{\eitem}{\end{itemize}}
\newcommand{\ben}{\begin{enumerate}[{\rm (1)}]}
\newcommand{\bnum}{\begin{enumerate}[{\rm (i)}]}
\newcommand{\bnump}{\begin{enumerate}[{\rm (i)$'$}]}
\newcommand{\bna}{\begin{enumerate}[{\rm (a)}]}
\newcommand{\bnA}{\begin{enumerate}[{\rm (A)}]}
\newcommand{\bc}{\begin{cases}}
\newcommand{\ec}{\end{cases}}
\newenvironment{myequation}
{\relax\setlength{\arraycolsep}{1pt}\begin{eqnarray}}
{\end{eqnarray}}
\newenvironment{myequationn}
{\relax\setlength{\arraycolsep}{1pt}\begin{eqnarray*}}
{\end{eqnarray*}}
\nc{\eq}{\begin{myequation}}
\nc{\eneq}{\end{myequation}}
\nc{\eqn}{\begin{myequationn}}
\nc{\eneqn}{\end{myequationn}}
\nc{\cl}{\colon}
\nc{\ake}[1][1ex]{\rule[-#1]{0ex}{1ex}}
\nc{\akew}[1][1ex]{\rule[-1ex]{#1}{0ex}}
\nc{\akeu}[1][1ex]{\rule[#1]{0ex}{1ex}}
\nc{\id}{\mathrm{id}}
\nc{\bl}{\bigl(}
\nc{\br}{\bigr)}
\nc{\qt}[1]{\quad\text{#1}}
\nc{\qtq}[1][{and}]{\quad\text{{#1}}\quad}
\nc{\eqs}[1]{\underset{\raisebox{.4ex}[.7ex][0ex]{$\scriptstyle{#1}$}}{=}}
\nc{\snoi}{\smallskip \noindent}
\nc{\mnoi}{\medskip \noindent}
\nc{\Mat}{\mathrm{Mat}}
\nc{\ol}{\overline}
\nc{\ul}{\underline}
\nc{\ang}[1]{\boldsymbol{\langle}{#1}\boldsymbol{\rangle}}
\nc{\rang}[1]{\boldsymbol{\langle}{#1}\boldsymbol{\rangle} \hspace{-.6ex} \boldsymbol{\rangle}}
\nc{\ba}{\begin{array}}
\nc{\ea}{\end{array}}
\nc{\noi}{\noindent}
\nc{\evq}{ \ev_{q=1}}
\nc{\Um}{ \calU^-_q(\g)}
\nc{\yi}{x_{i}}
\nc{\yj}{x_{j}}
\nc{\yk}{x_{k}}
\nc{\yjm}{x_{j,m}}
\nc{\yjmp}{x_{j,m+1}}
\nc{\yjmm}{x_{j,m-1}}
\nc{\yim}{x_{i,m}}
\nc{\yip}{x_{i,p}}
\nc{\yipp}{x_{i,p+1}}
\nc{\yipm}{x_{i,p-1}}
\nc{\yjp}{x_{j,p}}
\nc{\yjpp}{x_{j,p+1}}
\nc{\yjpm}{x_{j,p-1}}
\nc{\yimp}{x_{i,m+1}}
\nc{\yimm}{x_{i,m-1}}
\nc{\ykm}{x_{k,m}}
\nc{\ynm}{x_{n,m}}
\nc{\ynmp}{x_{n,m+1}}
\nc{\ynmpp}{x_{n,m+2}}
\nc{\ynnm}{x_{n-1,m}}
\nc{\ynnmp}{x_{n-1,m+1}}
\nc{\ynnmpp}{x_{n-1,m+2}}
\nc{\ykmp}{x_{k,m+1}}
\nc{\ykmm}{x_{k,m-1}}
\nc{\ykp}{x_{k,p}}
\nc{\ykpp}{x_{k,p+1}}
\nc{\ykpm}{x_{k,p-1}}
\nc{\seq}[1]{ \boldsymbol{(} {#1} \boldsymbol{)}   }
\newcommand{\bse}{{\boldsymbol{e}}}
\newcommand{\bsd}{{\boldsymbol{d}}}
\newcommand{\bsa}{{\boldsymbol{a}}}
\newcommand{\bsc}{{\boldsymbol{c}}}
\newcommand{\bsu}{{\boldsymbol{u}}}
\newcommand{\bsv}{{\boldsymbol{v}}}
\newcommand{\bst}{{\boldsymbol{t}}}
\nc{\rvt}[2]{ {\be^{#1}_{#2}} }
\nc{\pvt}[2]{ {f^{#1}_{#2}} }
\nc{\qbA}[1]{ {\hcalA_{#1}} }
\nc{\qbAu}[1]{ {\hcalA^{#1}} }
\nc{\cmC}{\mathsf{C}}
\nc{\bR}{\mathbf{k}}
\nc{\qA}{\widehat{\mathcal{A}}_\g}
\nc{\bT}{\mathcal{T}}
\nc{\bS}{\mathrm{S}}
\nc{\cC}{\kappa}
\nc{\Ht}{\mathrm{ht}}
\nc{\rlQ}{\mathsf{Q}}
\nc{\Span}{\mathrm{Span}}
\nc{\ri}{r_i}
\nc{\ir}{{_ir}}
\nc{\bfi}{\mathbf{i}}
\nc{\bfj}{\mathbf{j}}
\nc{\bft}{\mathbf{t}}
\nc{\bfu}{\mathbf{u}}
\nc{\bfv}{\mathbf{v}}
\nc{\rF}{\mathsf{F}}
\nc{\rG}{\mathsf{X}}
\nc{\rH}{\mathsf{Y}}
\nc{\rFv}{\mathsf{G}}
\nc{\rFt}{\widetilde\rF}
\nc{\dD}{\mathcal{D}}
\nc{\inv}{\zeta}
\nc{\bg}{\ttB}
\nc{\gar}{\Updelta}
\nc{\Ep}{E'}
\nc{\Es}{E^{\star}}
\nc{\es}{e^\star}
\nc{\bbe}{\mathsf{e}}
\nc{\bbf}{\mathsf{f}}
\newcommand{\abs}[1]{ \widetilde {#1} }
\nc{\bfic}{\bfi_{\circ}}
\nc{\bficp}{\bfi_{\circ}^\prime}
\nc{\pbwB}{\mathcal{B}}
\nc{\pd}{\mathfrak{I}}
\nc{\pdh}{\mathfrak{H}}
\nc{\gf}{{\g_{\mathrm{fin}}}}
\nc{\catC}{\scrC}
\nc{\hg}{\mathfrak{G}}
\title[PBW theory for Bosonic extensions of quantum groups]{PBW theory for Bosonic extensions of quantum groups}
\author[S.-j.~Oh]{Se-jin Oh } %^{\dagger}
\address[S.-j.~Oh]{ Department of Mathematics, Sungkyunkwan University, Suwon, South Korea}
\email{sejin092@gmail.com}
\urladdr{https://sites.google.com/site/mathsejinoh/}
\thanks{ S.-j.\ Oh was supported by the Ministry of Education of the Republic of Korea and the National Research Foundation of Korea (NRF-2022R1A2C1004045).}
\author[E. Park]{Euiyong Park}
\thanks{The research of E.\ Park was supported by the  National Research Foundation of Korea(NRF) grant funded by the Korea government (MSIT) (RS-2023-00273425 and NRF-2020R1A5A1016126).}
\address[E. Park]{Department of Mathematics, University of Seoul, Seoul 02504, Korea}
\email[E. Park]{epark@uos.ac.kr}
\date{February 7, 2024}
\begin{document}

\begin{abstract}
In this paper, we develop the PBW theory for the bosonic extension $\qbA{\g}$ of a quantum group $\mathcal{U}_q(\g)$ of \emph{any} finite type. 
When $\g$ belongs to the class of \emph{simply-laced type}, the algebra $\qbA{\g}$ arises from the quantum Grothendieck ring of the Hernandez-Leclerc category over 
quantum affine algebras of untwisted affine types. 
We introduce and investigate a symmetric bilinear form $\pair{\ , \ }$ on $\qbA{\g}$ which is invariant under the braid group actions $\bT_i$ on $\qbA{\g}$, and 
study the adjoint operators $\Ep_{i,p}$ and $\Es_{i,p}$ with respect to $\pair{\ , \ }$.    
It turns out that the adjoint operators $\Ep_{i,p}$ and $\Es_{i,p}$ are analogues of the $q$-derivations $e_i'$ and $\es_i$ on the negative half $\calU_q^-(\g)$ of $\calU_q(\g)$. 
Following this, we introduce a new family of subalgebras denoted as $\qbA{\mathfrak{g}}(\ttb)$ in $\qbA{\mathfrak{g}}$. These subalgebras are defined for any elements $\ttb$ in the positive submonoid $\bg^+$ of the 
(generalized) braid group $\ttB$ of $\g$. We prove that $\qbA{\mathfrak{g}}(\ttb)$ exhibits PBW root vectors and PBW bases defined by $\bT_\ii$ for any sequence $\ii$ of $\ttb$.
The PBW root vectors satisfy a Levendorskii-Soibelman formula and the PBW bases are orthogonal with respect to $\pair{\ , \ }$. The algebras $\qbA{\g} (\ttb)$ can be understood as a natural extension of quantum unipotent coordinate rings.  
\end{abstract}

\setcounter{tocdepth}{1}

\maketitle
\tableofcontents

\section{Introduction}

Let $q$ be an indeterminate and let $\hg$ be an affine Kac-Moody Lie algebra. 
The category $\catC_{\hg}$ of finite-dimensional integrable modules over the quantum affine algebra $\calU_q'(\hg)$ takes a central and important part in the study of representations of quantum affine algebras (see~\cite{CH10,HL21} and references therein).
 The \emph{Hernandez-Leclerc category} $\catC_{\hg}^0$ is a distinguished monoidal subcategory of $\catC_{\hg}$ defined by certain fundamental modules (see \cite{HL10} for precise definition). Hernandez-Leclerc showed a cluster-algebraic approach to the Grothendieck ring $K(\catC_{\hg}^0)$ of the Hernandez-Leclerc category $\catC_{\hg}^0$, which led us to several remarkable research results (see \cite{FHOO, FHOO2, HL10, HL15, HL16} for examples).
The \emph{quantum Grothendieck ring}  $\calK_t(\catC_{\hg}^0)$ is a $t$-deformation of the Grothendieck ring $K(\catC_{\hg}^0)$ inside a quantum torus, where $t$ is an indeterminate (\cite{Her04, Nak04,VV03}).  
It turns out that there exist several subalgebras of $\calK_t(\catC_{\hg}^0)$ having (quantum) cluster algebra structures  of \emph{skew-symmetric type}  
with  initial  clusters  arising from the \emph{$(q,t)$-characters} of Kirillov–Reshetikhin modules (see \cite{FHOO2, HL10, HL16}).
For the sake of simplicity, we assume that $\hg$ is of simply-laced affine ADE type, and write  $\bfg$ for the finite ADE simple Lie algebra inside $\hg$ with an index set $I$.  
As an algebra, a presentation of  $ \calK_t(\catC_{\hg}^0)$  was studied in \cite{HL15}. The algebra $\calK_t(\catC_{\hg}^0)$ has an infinitely many generators $f_{i,p}$ ($i\in I$, $p\in \Z$) and they satisfy the quantum Serre relations  among $\{ f_{i,p} \}_{i \in I}$   and $t$-boson relations  among $\{ f_{i,p}, f_{j,p+1} \}_{i,j \in I}$ for each $p \in \Z$. 
Thus, for each $p\in \Z$, the subalgebra of $\calK_t(\catC_{\hg}^0)$ generated by $f_{i,p}$ ($i\in I$) is isomorphic to the quantum group $\calU_t^-(\bfg)$. This observation suggests that the algebra $\calK_t(\catC_{\hg}^0)$ can be regarded as an \emph{affinization} of $\calU_t^-(\bfg)$.
We denote by $\qbA{\bfg}$ the algebra defined formally by the generators $f_{i,p}$ ($i\in I$, $p\in \Z$) and the same defining relations as in $\calK_t(\catC_{\hg}^0)$, which is called the \emph{bosonic extension} of the quantum group $\calU_t(\bfg)$. By construction, there is an isomorphism $\qbA{\bfg} \simeq  \calK_t(\catC_{\hg}^0)$. 
Despite the significance of the algebra $\qbA{\bfg}$, the algebra does not seem to have been intensively explored from the perspective of ring theory (we refer \cite[Introduction \S 1.3]{HL15}).
Our interest lies in the algebra structure of $\qbA{\g}$  for \emph{any} simple Lie algebra $\g$.   

The bosonic extension $\qbA{\g}$ is defined in Definition \ref{Def: extended qg} for a Cartan matrix $\cmC = (c_{i,j})_{i,j\in I}$ of \emph{arbitrary finite type}. In Definition \ref{Def: extended qg}, we use $q$ as an indeterminate to follow the usual notation in quantum groups. Remark \ref{Rem: q and t} explains the relation between two indeterminates $q$ and $t$. 
Note that, for any quantum affine algebra  $\calU_q'(\hg)$ of \emph{arbitrary  untwisted affine type},  
the quantum Grothendieck ring $\calK_t(\catC_{\hg}^0)$ is isomorphic to the bosonic extension $\qbA{\bfg}$ associated with some \emph{symmetric} Cartan matrix $\sfC_\bfg$ (see Remark \ref{Rem: q and t}). When Cartan matrix $\cmC$ is of \emph{non-symmetric type}, the bosonic extension $\qbA{\g}$ is isomorphic to the \emph{quantum virtual Grothendieck ring} introduced in 
\cite{KO22,JLO2, JLO1}.  Similar to quantum Grothendieck ring, the quantum virtual Grothendieck ring is realized as a subalgebra inside a quantum torus and possesses a (quantum) cluster algebra structure. 
However, the asymmetry of $\qbA{\g}$ aligns with that of $\g$, whereas the asymmetry of $\calK_t(\scrC_\hg^0)$ is consistently skew-symmetric (see \cite{JLO1}).
It was shown that $\qbA{\g}$ has an action of the (generalized) braid group $\bg_\cmC$ associated with the Cartan matrix $\cmC$ (see \cite{KKOP21A} for symmetric types and \cite{JLO2} for general types). The braid group action $\bT_i$ ($i\in I$) on $\qbA{\g}$ is similar to Lusztig's braid symmetry on $\calU_q(\g)$ (\cite{LusztigBook}) in local parts and is understood as a ring-theoretic counterpart of the braid group action on the \emph{extended crystal} (see \cite{KP22, Park23}). It is conjectured in \cite{KKOP21A} that the braid group actions $\bT_i$ can be lifted to autofunctors on the category  $\catC_\hg^0$.  

In this paper, we introduce a family of new subalgebras $\qbA{\g} (\ttb)$ of $\qbA{\g}$ for any elements $\ttb$ in the positive submonoid $\bg_\cmC^+$ of the braid group $\bg_\cmC$ associated with $\cmC$
and develop the \emph{Poincar\'e-Birkhoff-Witt theory} (shortly PBW theory) for the subalgebra $\qbA{\g} (\ttb)$ using the braid group actions $\bT_i$ ($i\in I$). 
Let $\weyl_\cmC$ be the Weyl group associated with $\cmC$ and $\pi: \bg_\cmC^+ \twoheadrightarrow \weyl_\cmC$ the natural projection (see \eqref{Eq: b->W}).  
When there is $w\in \weyl_\cmC$ such that $w = \pi(\ttb)$ and $\ell(w) = \ell(\ttb)$, 
%For any $w\in \weyl$ such that $w = \pi^+(\ttb)$ and $\ell(w) = \ell(\ttb)$, 
the subalgebra $\qbA{\g} (\ttb)$ coincides with the \emph{quantum unipotent coordinate algebra} $A_q(\n(w))$. 
Note that the algebra $A_q(\n(w))$ is defined by the (dual) PBW basis using Lusztig's braid symmetries and 
takes a crucial role in the intersection of quantum groups and cluster algebras (see \cite{GLS11,GLS13, GY17, KKKO18, Kimura12} for examples).  

Since the algebra $\qbA{\g}$ serves as an affinization of $\calU_q^-(\g)$, one can regard the algebra $\qbA{\g} (\ttb)$ as a natural generalization of $A_q(\n(w))$,   
and the PBW theory developed in the paper is also understood as an extension of the PBW theory for $A_q(\n(w))$ through the braid group $\ttB$.

Let us explain our results more precisely. We write $\qbA{}$ for $\qbA{\g}$ for simplicity and, for an interval $\ttJ$, define $\qbA{\ttJ}$ to be the subalgebra of $\qbA{}$ generated by $f_{i,p}$ for $i\in I $ and $p\in \ttJ$. Note that $ \qbA{[k]} \simeq \calU_q^-(\g)$ for any single interval $[k]:=\{ k \} \subset \Z$. 
In Definition \ref{def: ( )}, we define a bilinear form 
$\pair{ \ , \  }$ on $\qbA{[a,b]}$, which can be understood as a braid-analogue of the bilinear forms 
$(\cdot,\cdot)_L$ of Lusztig and $(\cdot,\cdot)_K$ of Kashiwara on $\calU_q^-(\g)$
in a simultaneously way, through the serial decomposition of $\qbA{[a,b]}$: 
$$
\qbA{[a,b]} \simeq \qbA{[b]} \otimes_{\bR} \qbA{[b-1]} \otimes_{\bR} \cdots \otimes_{\bR}  \qbA{[a]}
$$
(see Lemma \ref{Lem: qA decomp}).  
 Lemma \ref{Lem: properties of (,)} asserts that the bilinear form $\pair{\ , \ }$ is both symmetric and non-degenerate. Moreover, it exhibits compatibility with (anti-)automorphisms $\dD$ and $*$.   
Similar to the quantum group case, it is revealed that the bilinear form $\pair{\ , \ }$ remains invariant under the braid group action $\bT_i$ for $i\in I$ (see Proposition \ref{Prop: invariant form}).

For a \emph{PBW datum} $\pd = (\ii, \xi)$, which is a pair of $ \ii = (i_u, i_{u+1}, \ldots, i_v) \in I^{[u,v]}$ and $\xi\in \Z$, we define the \emph{PBW root vectors}
$$
\rF^{\pd}_{k} \seteq \bT_{i_u} \bT_{i_{u+1}} \cdots \bT_{i_{k-1}} (f_{i_k,{\xi}}) \qquad \text{for any $k\in [u,v]$},
$$
and the \emph{PBW monomials}
$$
\rF_{\pd}(\bsd) \seteq \Opd^{\longrightarrow}_{k\in [u,v]}  \rF^{d_k}_k  =    \rF^{d_v}_v    \rF^{d_{v-1}}_{v-1}   
\cdots  \rF^{d_u}_u \qquad \text{for any $\bsd = (d_u, d_{u+1}, \ldots d_v) \in \Z_{\ge 0}^{\oplus [u,v]}$}.
$$
The $*$-twisted PBW root vectors $\rFv^{ \pd}_{k}$ and PBW monomials $\rFv_{\pd}(\bsd)$ are also defined (see \eqref{Eq: PBW vectors} and \eqref{Eq: PBW monomials}) and the relation between those two PBW root vectors is investigated in Lemma \ref{lem: F anf vF}.

We initially consider with the case when $\bfi \in I^{[1, N]}$ is a \emph{locally reduced sequence} (see \eqref{Eq: longest case}).
Here $N = \ell \times(b-a+1)$ represents a multiple of the length $\ell = \ell (w_\circ)$ of the longest element $w_\circ \in \weyl_\cmC$. Lemma \ref{Lem: pbw comm}  asserts that the PBW root vectors $\rF^{\pd}_{k}$ satisfy an analogue of \emph{Levendorskii-Soibelman formula} (\cite{LS91}) (shortly LS-formula).
This formula is a convex property among $\rF^{\pd}_{k}$.  
We establish that the PBW monomials $\rF_{\pd}(\bsd)$ form an orthogonal basis for $\qbA{[a,b]}$ (encompassing the entire algebra $\qbA{}$) with respect to the bilinear form $\pair{\ , \ }$. This is achieved by providing an explicit formula for $ \pair{\rF_{\pd}(\bsd), \rF_{\pd}(\bsd')}$ 
(see Theorem \ref{Thm: orthonormal_reduced} and Corollary \ref{Cor: orthonormal_reduced_Aq}). Proposition \ref{Prop: T and S} tells us that the braid group action $\bT_i$ is locally equivalent to Lusztig's braid symmetry $\bS_i$ (up to a non-zero scalar multiple). This equivalence plays a crucial role in the course of our proofs. 
In the case of $\qbA{ } \simeq  \calK_t(\catC_{\hg}^0)$, the fundamental modules and standard modules in the category $\catC_{\hg}^0$ give vectors and  
a basis  of $\qbA{ }$ via $(q,t)$-characters (\cite{FHOO,Her04, Nak04, VV03}), which coincide with the PBW root vectors and PBW monomials associated with a 
sequence $\bfi$ \emph{adapted to a $\rmQ$-datum $Q$} (see \cite{FO21,FHOO}). Since $Q$-adapted sequences are examples of locally reduced 
sequences, our PBW vectors and PBW bases generalize the vectors and basis arising from fundamental modules and standard modules. Note that the PBW 
theory for $\qbA{}$ associated with locally reduced sequences can be understood as a ring-theoretic counterpart of the PBW theory for $\catC_\hg^0$ 
developed in \cite{KKOP23}.

For the full generality, we consider \emph{any} element $\ttb\in \bg^+$. 
Let $\pd = (\ii, \xi)$ be a PBW datum of $\ttb$, i.e,  $ \ii \in \Seq(\ttb)$ (see Definition \ref{def: reduced} for $\Seq(\ttb)$) and $\xi\in \Z$, and let $L = \ell(\ttb)$. 
For any interval $\ttJ \subset [1,L]$, 
we denote by $\qbAu{\ttJ,\pd} $ the subalgebra of $\qbA{}$ generated by the PBW root vectors $\rF^{\pd}_k$ for all $  k \in \ttJ$.
In the case when $\xi = 0$ and $\ttJ=[1,L]$, we simple write $\qbA{}(\ttb)\seteq \qbAu{[1,N],\pd}$. 
Lemma \ref{Lem: qA(pd, rs)} says that any subalgebra $\qbAu{\ttJ,\pd} $ can be reduced to the case of the form $\qbA{}(\ttb')$ for some $\ttb' \in \bg^+$ from a ring theoretic point of view. 
 
Utilizing the \emph{Garside element} $\gar$ in $\bg^+$ and the PBW theory linked to locally reduced sequences, we establish an analogue of the LS-formula for PBW root vectors $\rF^{\pd}_{k}$. Consequently, we demonstrate that the PBW monomials $\rF_{\pd}(\bsd)$ form an orthogonal basis for $\qbA{}(\ttb)$ with respect to the bilinear form $\pair{\ , \ }$ (refer to Theorem \ref{Thm: main}).
A crucial element in our proof involves the use of \emph{adjoint operators} $\Ep_{i,p}$ and $\Es_{i,p}$ introduced in Section \ref{Sec: adjoint operators} (see Definition \ref{def: Adjoint operator}). These operators, $\Ep_{i,p}$ and $\Es_{i,p}$, serve as adjoints to the left and right multiplication of $f_{i,p}$ with respect to $\pair{\ , \ }$ respectively.
We give an explicit formulas for applying $\Ep_{i,p}$ and $\Es_{i,p}$ to homogeneous elements (Theorem \ref{Thm: E_ik, f_ik}) and show that $\Ep_{i,p}$ and $\Es_{i,p}$ are analogues of the $q$-derivations $e_i'$ and $\es_i$  defined  on the negative half $\calU_q^-(\g)$ of quantum group $\calU_q(\g)$ (Corollary \ref{cor: prime operation}).
We expect that $f_{i,p}$ and $\Ep_{i,p}$ may give ring-theoretic \emph{extended crystal operators} of the categorical crystal introduced in \cite{KP22}.  
Since $\qbA{} (\ttb)$ can be viewed as a generalization of the quantum unipotent coordinate ring $A_q(\n(w))$, it is natural and interesting to ask whether $\qbA{} (\ttb)$ possesses a quantum cluster algebra structure associated with $\ttb$. We conjecture that $\qbA{} (\ttb)$ has a quantum cluster algebra structure whose compatible pairs appear in \cite[\S 2]{FHOO2} even if $\ii$ is not locally reduced.  We also conjecture that, in the case of $\qbA{ } \simeq  \calK_t(\catC_{\hg}^0)$, there exists a subcategory $\catC_\ttb$ of the category $\catC_{\hg}^0$ which provides a \emph{monoidal categorification} of $\qbA{} (\ttb)$. Note that, when $\ii$ is a locally reduced sequence, this follows from the monoidal categorification in \cite{KKOP2}.  
It is also interesting to develop the \emph{global basis} (or \emph{canonical basis}) theory for $\hcalA(\ttb)$.

This paper is organized as follows. 
In Section \ref{Sec:Preliminaries}, we give necessary backgrounds on quantum groups, braid groups, PBW bases and etc.  
In Section \ref{Sec: Bosonic ext}, we define the bosonic extension $\qbA{\g}$ of a quantum group $\calU_q(\g)$ and investigate its properties. 
In Section \ref{Sec: bf and bg}, we introduce and study the bilinear form $\pair{ \ , \ }$ on $\qbA{\g}$, and investigate the braid group actions $\bT_i$.
In Section \ref{Sec: PBW locally reduced}, we develop the PBW theory associated with locally reduced sequences. 
In Section \ref{Sec: adjoint operators}, we introduce and study the adjoint operators $\Ep_{i,p}$ and $\Es_{i,p}$.
In Section \ref{Section: general PBW}, we develop the PBW theory associated with arbitrary sequences.

\medskip
\noindent
{\bf Acknowledgments}
We express our gratitude to Masaki Kashiwara, Myungho Kim, Yoshiyuki Kimura, Ryo Fujita and Hironori Oya for fruitful discussion and helpful comments.

\vskip 1em 

\subsection*{Convention}
Throughout this paper, we use the following convention.
\ben
\item For a statement $\ttP$, we set $\delta(\ttP)$ to be $1$ or $0$ depending on whether $\ttP$ is true or not. In particular, we set $\delta_{i,j}=\delta(i =j)$. 
\item For $a\in \Z \cup \{ -\infty \} $ and $b\in \Z \cup \{ \infty \} $ with $a\le b$, we set 
\begin{align*}
& [a,b] =\{  k \in \Z \ | \ a \le k \le b\}, &&  [a,b) =\{  k \in \Z \ | \ a \le k < b\}, \allowdisplaybreaks\\
& (a,b] =\{  k \in \Z \ | \ a < k \le b\}, &&  (a,b) =\{  k \in \Z \ | \ a < k < b\},
\end{align*}
and call them \emph{intervals}. 
When $a> b$, we understand them as empty sets. For simplicity, when $a=b$, we write $[a]$ for $[a,b]$. For an interval $[a,b]$, we set
$A^{[a,b]}$ to be the product of copies of a set $A$ indexed by $[a,b]$, and 
$$
\Z_{\ge0}^{\oplus [a,b]} \seteq \{ (c_a,\ldots,c_b) \ | \ c_k \in \Z_{\ge 0} \text{ and  $c_k=0$ except for finitely many $k$'s} \}. 
$$
We define $A^{[a,b)}$,  $\Z_{\ge 0}^{\oplus [a,b)}$, ..., etc in a similar way. 
\item For a totally ordered set $J=\{\cdots < j_{-1} < j_{0} < j_{1} < j_{2} < \cdots \}$, write
$$
\Opd_{j \in J}^{\longrightarrow} A_{j} \seteq \cdots A_{j_{2}}A_{j_1}A_{j_{0}}A_{j_{-1}} \cdots, \qquad 
\Opd_{j \in J}^{\longleftarrow} A_{j} \seteq \cdots A_{j_{-1}}A_{j_0}A_{j_{1}}A_{j_{2}} \cdots.
$$
\ee

\vskip 2em

\section{Preliminaries} \label{Sec:Preliminaries}

We briefly recall some backgrounds on quantum groups $\calU_q(\g)$ and $q$-boson algebras $\calB_q(\g)$ (see \cite{K91, Kimura12, LusztigBook} and references therein for details).  

Let $\sfC=(c_{i,j})_{i,j \in I}$ be the \emph{finite type Cartan matrix} and let $\g$ be the finite dimensional simple Lie algebra associated with $\cmC$. We denote
by $\Pi=\{\al_i \}_{i \in I}$ the set of simple roots, by $\Pi^\vee=\{ h_i \}_{i \in I}$ the set of simple coroots. 
We also denote by $\wl$ the weight lattice of $\g$, 
and by $\Phi$ the set of roots of $\g$. We write $\Phi_\pm$ for the set of positive (resp. negative) roots.  We call the quintuple $(\sfC,\wl,\Pi,\wl^\vee,\Pi^\vee)$ the Cartan datum of $\g$.

The free abelian group $\rl = \bigoplus_{i \in I} \Z \al_i$ is called the \emph{root lattice} and we set $\rl_{\pm} \seteq \pm \sum_{i \in I}\Z_{\ge0}\al_i$. For an element $\be = \sum_{i \in I} a_i\al_i \in \rl_\pm$, we define 
\begin{align} \label{eq: height and obe}
\het(\be) = \sum_{i \in I}|a_i|  \quad \text{ and } \quad  
\abs{\be} = \sum_{i \in I} |a_i|\al_i \in \rl_+.    
\end{align}
For $\al,\be \in \Phi$, we set 
\begin{align} \label{eq: p al,be}
p_{\be,\al} \seteq \max\{ p \in \Z \ | \ \be - p\al \in \Phi \}.     
\end{align}
There exists a diagonal matrix $\sfD=\diag(\sfd_i \in \Z_{\ge 1})_{i\in I}$ such that
$\sfD\sfC$ is symmetric. In this paper, we take such $\sfD$ satisfying $\min(\sfd_i)_{i \in I}=1$. There exists a symmetric bilinear form $( \cdot, \cdot)$ on $\wl$
such that 
\begin{align*}
    (\al_i,\al_j) =\sfd_i c_{i,j} \qtq \lan h_i,\al_j \ran = c_{i,j} = \dfrac{2(\al_i,\al_j)}{(\al_i,\al_i)} \qt{for any $i,j\in I$}. 
\end{align*}
Note that $\sfd_i = (\al_i,\al_i)/2 \in \Z_{\ge1}$ for $i\in I$.

\subsection{Quantum groups}

Let $q$ be an indeterminate and $q^{1/2}$ be the formal square root of $q$. We set 
$\bfk \seteq \Q(q^{1/2})$, 
$$q_i\seteq q^{(\al_i,\al_i)/2}, \quad  [n]_{q_i}= \dfrac{q_i^n-q_i^{-n}}{q_i-q_i^{-1}}, \quad [n]_{q_i}! = \prod_{k=1}^n [k]_{q_i} \qtq
\begin{bmatrix}
n \\ m
\end{bmatrix}_{q_i} = \dfrac{[n]_{q_i}!}{[m]_{q_i}![n-m]_{q_i}!},
$$
where $i\in I$ and $n \ge m \in \Z_{\ge0}$. We write $[n]_i$ (resp.\ $[n]$) instead of $[n]_{q_i}$ (resp.\ $[n]_q$) for simplicity. 

\begin{definition}
The \emph{quantum group} $\calU_q(\g)$ associated with the Cartan datum $(\sfC,\wl,\Pi,\wl^\vee,\Pi^\vee)$ is the $\bfk$-algebra
generated by $e_i,f_i$ $(i \in I)$ and $q^h$ $(h \in \wl^\vee)$ subject to the following relations:
\begin{align*}
 q^0=1, \quad q^{h}q^{h'}=q^{h+h'}, \quad & q^he_iq^{-h} = q^{\lan h,\al_i\ran}e_i, \quad q^hf_iq^{-h} = q^{-\lan h,\al_i\ran}f_i, \allowdisplaybreaks\\
 e_if_j - f_je_i &= \delta_{i,j} (t_i-t_i^{-1})(q_i-q_i^{-1})^{-1}, \allowdisplaybreaks\\
 \sum_{k=0}^{1-c_{i,j}} (-1)^k \begin{bmatrix}  1-c_{i,j} \\ k \end{bmatrix}_i e_i^{1-c_{i,j}-k}e_je_i^k 
&= \sum_{k=0}^{1-c_{i,j}} (-1)^k \begin{bmatrix}  1-c_{i,j} \\ k \end{bmatrix}_i f_i^{1-c_{i,j}-k}f_jf_i^k=0 \quad \text{for $i\ne j$}, 
\end{align*}
where $t_i \seteq q^{\sfd_ih_i}$. 
\end{definition}
Let $\calU_q^{\pm}(\g)$ be the subalgebra of $\calU_q(\g)$ generated by $e_i$'s (resp. $f_i$'s). 
We set $f_i^{(k)}\seteq f_i / [k]_i!$ and 
$\wt(x):=\be$ for any $x \in \calU_q^-(\g)_\be$, 
where $\calU_q^-(\g)_\be$ is the weight space of $\calU_q^-(\g)$ with weight $\beta \in \rl_-$.
We have the following automorphisms:
\bna
\item The $\bfk$-algebra anti-involution $*:\calU_q(\g) \to \calU_q(\g)$ given by
$$
e_i^* = e_i, \quad f_i^*=f_i \qtq (q^{h_i})^* = q^{-h_i}.
$$
\item For an automorphism $\sigma$ of the Dynkin diagram of $\g$, the $\bfk$-algebra automorphism
$\sigma: \calU_q(\g) \to \calU_q(\g)$ given by
$$
\sigma(e_i) = e_{\sigma(i)}, \quad \sigma(f_i) = f_{\sigma(i)} \qtq 
\sigma(q^{h_i}) = q^{h_{\sigma(i)}}. 
$$
\item The $\Q$-algebra anti-automorphism $-:\calU_q(\g)\to \calU_q(\g)$ given by
$$
\overline{e_i}=e_i, \quad \overline{f_i}=f_i, \quad \overline{q^{h_i}}=q^{h_i} \qtq \overline{q^{1/2}}=q^{-1/2}.
$$
\ee
 
\subsection{Braid groups and Weyl groups}

We denote by $\ttB_\sfC$  the \emph{$($generalized$)$ braid group} or \emph{Artin-Tits group} associated with the Cartan matrix $\cmC= (c_{i,j})_{i,j\in I}$. The braid group $\ttB_\sfC $ is given by generators $r_i^{\pm1}$ $(i \in I)$ and the defining relations:
\begin{align} \label{Eq: braid gp}
r_i r_i^{-1} = r_i^{-1}r_i = {\rm id} 
\qtq
\underbrace{r^{\pm1}_ir^{\pm1}_jr^{\pm1}_ir^{\pm1}_j \cdots}_{m(i,j) \text{ factors}} = \underbrace{r^{\pm1}_jr^{\pm1}_ir^{\pm1}_jr^{\pm1}_i \cdots}_{m(i,j) \text{ factors}} \quad \text{ for $i \ne j \in I$},
\end{align}
where 
\begin{align} \label{Eq: m(i,j)}
m(i,j) \seteq
\begin{cases}
c_{i,j} c_{j, i}+2 & \text{ if } c_{i,j} c_{j, i} \le 2,\\
6 & \text{ if } c_{i,j} c_{j, i} = 3.
\end{cases}
\end{align}
We call the second relation in~\eqref{Eq: braid gp} the \emph{braid relation} or \emph{$m(i,j)$-braid move}. We denote by $\ttB_\sfC^\pm$ the submonoid of $\ttB_\sfC$ generated by $r_i$ (resp. $r_i^{-1}$) for all $i\in I$. 

\smallskip 

Let
$\weyl_\sfC$ be the Weyl group associated with the Cartan matrix $\sfC$ generated by the simple reflections $s_i$ $(i \in I)$, where
$$ s_i\la = \la -\lan h_i,\la \ran\al_i \quad \text{for $\la \in \wl$}.  $$
The generators $s_i$ satisfy the following relations:
\begin{align*} 
  \ s_i^2 = {\rm id} \qtq  \ \underbrace{s_is_js_is_j \cdots}_{m(i,j) \text{ factors}} = \underbrace{s_js_is_js_i \cdots}_{m(i,j) \text{ factors}} \quad \text{ for $i\ne j \in I$}. 
\end{align*}
We then have the natural surjection   
\begin{align} \label{Eq: b->W}
\pi^{\pm} : \bg_\sfC^{\pm} \twoheadrightarrow \weyl_\sfC
\end{align}
sending $ r_{i}^{\pm}$ to $s_i$ for $i\in I$, respectively.

For an element $\ttb \in \bg_\cmC$ (resp.\ $w \in \weyl_\sfC$), we write $\ell(\ttb)$ (resp.\ $\ell(w)$) for its length. 
\begin{definition} \label{def: reduced} Let $\ttb\in \bg_\cmC^+$ and $w \in \weyl_\cmC$. Let $l\seteq \ell(\ttb)$ and $l'\seteq \ell(w) $.
\bnum
\item A sequence $\ii=(i_1,i_2,\ldots,i_l) \in I^{[1,l]}$ is said to be a \emph{expression} (or \emph{sequence}) of $\ttb$ if $\ttb = r_{i_1}r_{i_2}\cdots r_{i_l}$. We denote by $\Seq(\ttb)$ the set of all expression of $\ttb$.
\item A sequence $\ii=(i_1,\ldots,i_{l'}) \in I^{[1,l']}$ is said to be a \emph{reduced expression} (or \emph{reduced sequence}) of $w$
if $w = s_{i_1}\cdots s_{i_{l'}}$ and $ \ell(w) = l' $. We denote by $R(w)$ the set of all reduced expression of $w$.     

\ee
\end{definition}

Let $w_\circ$ be the longest element in $\weyl_\sfC$ and set $\ell \seteq \ell(w_\circ)$.
We define $\gar^\pm$ to be the element in $\ttB_\sfC^\pm$ such that $\ell(\gar^\pm)=\ell(w_\circ)$ and $\pi^\pm(\gar^\pm)=w_\circ$. 
For any sequence $\ii=( i_1,\ldots,i_l) \in I^l$, we set 
$$
\ttb_\ii\seteq r_{i_1}\cdots r_{i_l} \in \bg_\sfC^+ \quad \text{and}\quad 
w_\ii\seteq s_{i_1}\cdots s_{i_l} \in \weyl_\sfC.
$$
Note that $\Seq(\ttb) = \{ \ii \in I^{\ell(\ttb)} \ | \ \ttb_\ii =\ttb \}.$
We call a sequence $\ii \in I^N$ \emph{locally reduced} if the following condition is satisfied: For any $1 \le k \le l \le N$ such that $l-k+1 \le \ell(w_\circ)$, there exists $w \in \weyl$ such that
\begin{align} \label{eq: locally reduced}
(i_k,\ldots,i_{l}) \in R(w).       
\end{align}
For convenience to distinguish, we use $\bfi$ for locally reduced sequences and $\ii$ for general sequences.

\subsection{PBW bases for $\calU_q^-(\g)$}
Let $\bfi_\circ=(i_1,\ldots,i_\ell) \in R(w_\circ)$.
By setting $\rvt{\bfi_\circ}{k} \seteq s_{i_1} \ldots s_{i_{k-1}}(\al_{i_k})$ for each $1 \le k \le \ell$, we
have $\Phi_+ = \{ \rvt{\bfi_\circ}{k}  \ | \ 1 \le k \le \ell\}$. 
We define 
$$
\rvt{\bfi_\circ}{k} \le_{\bfi_\circ} \rvt{\bfi_\circ}{l} \quad \text{ for any } k \le l,
$$ 
which gives an 
 total order $\le_{\bfi_\circ}$ on $\Phi_+$. Note that the total order $\le_{\bfi_\circ}$ is \emph{convex} in the sense that,
for $\al,\be \in \Phi_+$ with $\al+\be \in \Phi_+$, either $\al \le_{\bfi_\circ} \al+\be \le_{\bfi_\circ} \be$ or $\be \le_{\bfi_\circ} \al+\be \le_{\bfi_\circ} \al$.
Let $(\al,\be)$ be a pair of positive roots such that $ \al <_{\bfi_\circ} \be $ and $\eta \seteq \al+\be \in \Phi_+$.
We call the pair $(\al,\be)$ an \emph{$\bfi_\circ$-minimal pair of $\eta$} if 
there exists no pair $(\al',\be') $ of positive roots such that $\al'+\be'=\eta$ and 
$\al <_{\bfi_\circ} \al' <_{\bfi_\circ} \eta <_{\bfi_\circ} \be' <_{\bfi_\circ} \be$.

\smallskip

Recall the braid symmetries and PBW bases for $\calU_q^-(\g)$ by following \cite{LusztigBook}. For $i\in I$, we set $\bS_i\seteq T'_{i,-1} $ and $\bS_i^{-1}\seteq T''_{i,1}$, where $T'_{i,-1}$ and $T''_{i,1} $ are Lusztig's braid symmetries defined in \cite[Chapter 37]{LusztigBook}.
Note that 
\begin{equation} \label{eq: Si}
\begin{aligned}
& \bS_i(f_i)= -e_it_i, \quad \quad \   \bS_i(f_j) = \sum_{r+s = -c_{i,j}}  (-1)^{ r} q_i^{r} f_{i}^{(s)} f_{j} f_{i}^{(r)} \text{ for $i \ne j$}, \\ 
&\bS_i^{-1}(f_i)= -t_i^{-1}e_i, \quad  \bS_i^{-1}(f_j) = \hs{-1ex} \sum_{r+s = -c_{i,j}} \hs{-1ex}  (-1)^{ r} q_i^{r} f_{i}^{(r)} f_{j} f_{i}^{(s)} \text{ for $i \ne j$}.
\end{aligned}
\end{equation}
Then the automorphisms $\{ \bS_i^{\pm1} \}$ satisfy the braid relations and hence $\bg_\sfC$ acts on $\calU_q(\g)$ via $\bS_i^\pm$. 
\smallskip

Let us fix a reduced sequence $\bfi_\circ=(i_1,\ldots,i_\ell)$ of $w_\circ$. For $1\le k \le \ell$, define 
\begin{align} \label{eq: pbw vector U}
\calF^{\bfic}_k \seteq \bS_{i_1} \bS_{i_2} \cdots \bS_{i_{k-1}} (f_{i_k}).     
\end{align}
Note that $\calF^{\bfic}_k \in \calU_\bbA^-(\g)$ and $\wt( \calF^{\bfic}_k ) =-\rvt{\bfi_\circ}{k}$.
We drop ${\bfic}$ from the notation if there is no danger of confusion. 
We call $\calF^{\bfic}_k$ the \emph{$k$-th PBW root vector} corresponding to $\bfic$. For each $\bsc = (c_1,\ldots,c_\ell) \in \Z_{\ge0}^\ell$, we define 
$$
\calF_{\bfic}(\bsc) \seteq \calF^{(c_\ell)}_\ell   \calF^{(c_{\ell-1})}_{\ell-1}   \cdots 
\calF^{(c_1)}_1,
$$
where $\calF_k^{(a)}  \seteq  \calF^{a}_k  /[a]_{i_k}!$ for $a \in \Z_{\ge0}$ and $k \in [1,\ell]$. 
It was shown that the set
\begin{align} \label{eq: PBW basis}
\calP_{\bfic} \seteq \{ \calF_{\bfic}(\bsc) \ | \ \bsc \in \Z_{\ge 0}^\ell \}    
\end{align}
forms a basis of $\calU_q^-(\g)$ (\cite{L90, LusztigBook}). We call $\calF_{\bfic}(\bsc)$ a \emph{PBW monomial} and $\calP_{\bfi_\circ}$ the \emph{PBW basis} of $\calU_q^-(\g)$ corresponding to $\bfic$.

\subsection{Bilinear forms and $q$-boson algebras} \label{Subsection: bf and qba}
By \cite[\S 1.2.5]{LusztigBook}, there exists a unique non-degenerate symmetric bilinear form $( \cdot , \cdot )_L$ on $\calU_q^-(\g)$ satisfying
$$
(1,1)_L = 1, \qquad (f_i,f_j)_L = \dfrac{\delta_{i,j}}{1-q_i^2}
$$
(see \cite{LusztigBook} for details and see also \cite[\S 2.1]{Kimura12}).
For $i \in I$, there exists a unique $\bfk$-linear endomorphism $e'_i$ and $\es_i$ of $\calU^-_q(\g)$ such that
\begin{equation}\label{Eq: ei ei*}
\begin{aligned}
& e'_i(f_j) = \delta_{i,j}, &&  e'_i(xy) = e'_i(x)y + q_i^{\lan h_i,\wt(x) \ran}x e'_i(y), \\   
& \es_i(f_j) = \delta_{i,j}, &&  \es_i(xy) = x  \es_i  (y) + q_i^{\lan h_i,\wt(y) \ran}  \es_i  (x) y,
\end{aligned}
\end{equation}
where $x,y \in \calU^-_q(\g)$ are homogeneous elements (see \cite[\S 3]{LusztigBook}, \cite{K91} and see also \cite[Lemma 8.2.1]{KKKO18}). Note that 
\begin{align} \label{eq: bracket}
[e_i,x] = \dfrac{\es_i(x)t_i - t_i^{-1}e'_i(x)}{q_i-q_i^{-1}}    
\end{align}
for $x \in \Um$ and $i \in I$.
\begin{lemma} For elements $y,z \in \Um$ with $\es_i(y)=0$ and $e'_i(z)=0$, we have
\begin{align} \label{Eq: es e'i for f_i^m}
e'_i(  f_{i}^{m} z ) =   q_i^{-m+1}[m]_i f_{i}^{m-1} z, \qquad \es_i( y f_{i}^{m} ) =   q_i^{-m+1}[m]_i  y f_{i}^{m-1} 
\end{align}
for any $m \in \Z_{\ge 0}$.
\end{lemma}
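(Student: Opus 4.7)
The plan is to prove both identities by induction on $m$, using the twisted Leibniz rules in \eqref{Eq: ei ei*} together with the hypotheses $e'_i(z)=0$ and $\es_i(y)=0$ to kill one of the two terms that appears at each inductive step.

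For the first identity, the base case $m=0$ holds trivially since $[0]_i=0$ and $e'_i(z)=0$, and the case $m=1$ is immediate from $e'_i(f_iz) = e'_i(f_i)z + q_i^{\lan h_i,\wt(f_i)\rangle}f_ie'_i(z) = z$. For the inductive step, I would write $f_i^{m+1}z = f_i\cdot(f_i^m z)$ and apply the Leibniz rule, noting that $\wt(f_i)=-\al_i$ so $q_i^{\lan h_i,\wt(f_i)\rangle}=q_i^{-2}$; the induction hypothesis then yields
\[
e'_i(f_i^{m+1}z) = f_i^m z + q_i^{-2}\cdot q_i^{-m+1}[m]_i\, f_i^m z = \bigl(1+q_i^{-m-1}[m]_i\bigr)f_i^m z.
\]
It remains to verify the $q$-arithmetic identity $1+q_i^{-m-1}[m]_i = q_i^{-m}[m+1]_i$, which is a direct computation: both sides equal $(q_i-q_i^{-2m-1})/(q_i-q_i^{-1})$.

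The second identity is handled symmetrically: one writes $yf_i^{m+1}=(yf_i^m)\cdot f_i$ and applies the Leibniz rule for $\es_i$, where the hypothesis $\es_i(y)=0$ together with the derivation property forces $\es_i(yf_i^m)$ to land in the subspace annihilated by $\es_i$-like terms acting on $y$ (this follows directly by a parallel induction showing $\es_i$ only hits the $f_i$'s on the right). The same arithmetic identity closes the induction.

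No serious obstacle is anticipated; the only care needed is tracking the weight correctly (so that $q_i^{\lan h_i,\wt(f_i)\rangle}=q_i^{-2}$) and confirming the elementary $q$-number manipulation. The conditions $e'_i(z)=0$ and $\es_i(y)=0$ are exactly what is needed to ensure the ``other'' derivation term vanishes throughout the induction, so the hypotheses propagate without additional assumption.
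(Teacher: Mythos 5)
Your induction is correct and is exactly the content the paper compresses into the single line "The assertion follows from~\eqref{Eq: ei ei*}": you peel off one $f_i$ at a time via the twisted Leibniz rule, use $e'_i(z)=0$ (resp.\ $\es_i(y)=0$) to kill the vanishing term, and close with the $q$-identity $1+q_i^{-m-1}[m]_i=q_i^{-m}[m+1]_i$, which you verify correctly. The only stylistic remark is that the description of the second identity is unnecessarily vague; the cleaner phrasing is simply that the parallel induction works word for word, or alternatively that the second identity follows from the first by applying the anti-involution $*$, since $*$ intertwines $e'_i$ and $\es_i$.
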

\begin{proof}
The assertion follows from~\eqref{Eq: ei ei*}.    
\end{proof}

The braid symmetry $\bS_i$ induces a $\bfk$-algebra isomorphism $\bS_i : {_i}\calU \isoto \calU_i$, where
\begin{align} \label{eq: Ui}
_i\calU \seteq \{ x \in \Um \ | \ e_i'(x)=0 \} \quad \text{ and } \quad 
\calU_i \seteq \{ x \in \Um \ | \ \es_i(x)=0 \}.
\end{align}
It is known that 
$$ 
( x, y)_L = ( \bS_i(x), \bS_i(y))_L 
$$ 
for any homogeneous $x,y \in {_i\calU}$ (see \cite[Proposition 38.2.1]{LusztigBook} and see also \cite[\S 4.5]{Kimura12}).

In \cite[\S 3.4]{K91}, Kashiwara proved that there exists a unique non-degenerate symmetric pairing $( \cdot , \cdot)_K$ on $\Um$ satisfying
\begin{align*} 
(1,1)_K = 1, \qquad (f_ix,y)_K = (x,e'_i(y))_K.    
\end{align*}
It is proved in \cite[\S 2.2]{L04} that
\begin{align} \label{eq: Kform Lform}
(x,y)_K = \prod_{i \in I} (1-q_i^2)^{n_i}(x,y)_L    
\end{align}
for homogeneous elements $x,y \in \Um_{\be}$ with $\be = -\sum n_i\al_i \in \rl_-$.

Let $\calB_q(\g)$ be the \emph{$q$-boson algebra} (also known as the \emph{Kashiwara algebra}) of $\g$, which is isomorphic to the $\bfk$-subalgebra of $\End(\Um)$ generated by $\{ \sfe_i',\sff_{j} \}_{i,j\in I}$, where  $\sff_{j}$ is the left multiplication of $f_j$ and $\sfe'_i \seteq e'_i$ in $\End(\Um)$.
Note that the generators $\{ \sfe_i',\sff_{j} \}_{i,j\in I}$ satisfy the defining relations
\begin{align*}
 \sfe_i' \sff_{j}  &= q_i^{-\lan h_i,\al_j \ran} \sff_j\sfe'_i +\delta_{i,j}, \text{ for all $i,j \in I$, and } \\
 \sum_{k=0}^{1-c_{i,j}} (-1)^k \begin{bmatrix}  1-c_{i,j} \\ k \end{bmatrix}_i & {\sfe'_i}^{1-c_{i,j}-k}\sfe'_j{\sfe'_i}^k 
= \sum_{k=0}^{1-c_{i,j}} (-1)^k \begin{bmatrix}  1-c_{i,j} \\ k \end{bmatrix}_i \sff_i^{1-c_{i,j}-k}\sff_j\sff_i^k=0 \text{ for $i\ne j$}
\end{align*}
(see \cite[\S 3.4]{K91}).

\begin{lemma} \label{Lem: Py}
Let $y \seteq \bS_i(f_i)= -e_i t_i$ and $P \in \calU_q^-(\g)$ with $\es_i(P)=0$. Then
$$
Py = q_i^{ - \langle  h_i, \wt(P) \rangle} \left(  y P + \frac{1}{  q_i^2 ( q_{i}^{-1} - q_i ) }     e'_i(P)  \right).
$$
\end{lemma}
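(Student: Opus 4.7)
The plan is a short commutation calculation in $\calU_q(\g)$. Substituting $y=-e_it_i$, I first rewrite $Py=-Pe_it_i$, and then apply the bracket formula~\eqref{eq: bracket} under the hypothesis $\es_i(P)=0$ to get
$$
Pe_i \;=\; e_iP \;+\; \frac{1}{q_i-q_i^{-1}}\,t_i^{-1}e'_i(P).
$$
Multiplying on the right by $-t_i$ then yields
$$
Py \;=\; -e_iPt_i \;-\; \frac{1}{q_i-q_i^{-1}}\,t_i^{-1}e'_i(P)\,t_i.
$$

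Next I would push $t_i$ past the weight-homogeneous elements $P$ and $e'_i(P)$, using the standard identity $t_iXt_i^{-1}=q_i^{\langle h_i,\wt(X)\rangle}X$ for a weight vector $X$. Applied to $X=P$, this gives $Pt_i=q_i^{-\langle h_i,\wt(P)\rangle}t_iP$, so the first term on the right becomes $q_i^{-\langle h_i,\wt(P)\rangle}\,yP$. Applied to $X=e'_i(P)$, which has weight $\wt(P)+\alpha_i$, it gives
$$
t_i^{-1}e'_i(P)\,t_i \;=\; q_i^{-\langle h_i,\wt(P)\rangle-2}\,e'_i(P).
$$

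Collecting both pieces and factoring out $q_i^{-\langle h_i,\wt(P)\rangle}$ reduces the claim to the trivial rearrangement $-q_i^{-2}/(q_i-q_i^{-1}) = 1/(q_i^{2}(q_i^{-1}-q_i))$, which produces the stated formula. There is no real obstacle here: every step is a one-line commutation, the hypothesis $\es_i(P)=0$ is used precisely to kill the $\es_i(P)\,t_i$ term in~\eqref{eq: bracket}, and the only bookkeeping is tracking the weight shift introduced by $e'_i$.
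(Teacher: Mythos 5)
Your proof is correct and follows essentially the same route as the paper: rewrite $Py=-Pe_it_i$, solve \eqref{eq: bracket} for $Pe_i$ using $\es_i(P)=0$, commute $t_i$ past $P$ and $e'_i(P)$ via $t_iXt_i^{-1}=q_i^{\langle h_i,\wt(X)\rangle}X$, and collect the constants. The only differences from the paper's computation are cosmetic sign conventions ($\tfrac{1}{q_i-q_i^{-1}}=-\tfrac{1}{q_i^{-1}-q_i}$) and the order of the bookkeeping.
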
	
\begin{proof}
It follows from \eqref{eq: bracket} that $ e_i P - P e_i = \frac{1}{  q_{i}^{-1} - q_i } t_i^{-1} e'_i(P) $, i.e., 
$$
P e_i = e_i P -  \frac{1}{  q_{i}^{-1} - q_i } t_i^{-1} e'_i(P).
$$
We thus have 
\begin{align*}
Py &= -P e_i t_i	= - \left(e_i P -  \frac{1}{  q_{i}^{-1} - q_i } t_i^{-1} e'_i(P) \right) t_i \allowdisplaybreaks\\ 
&= - q_i^{ - \langle  h_i, \wt(P) \rangle} e_i t_i P + \frac{1}{  q_{i}^{-1} - q_i } q_i^{ - \langle  h_i, \wt(P)+\al_i \rangle} e'_i(P) \allowdisplaybreaks\\
&= q_i^{ - \langle  h_i, \wt(P) \rangle} \left(  yP +  \frac{1}{ q_i^2  \left( q_{i}^{-1} - q_i \right) } e'_i(P) \right). \qedhere 
\end{align*}		
\end{proof}	

\vskip 2em 

\section{Bosonic extensions of quantum groups} \label{Sec: Bosonic ext}
In this section, we investigate several properties of the \emph{bosonic extensions of quantum groups} introduced in \cite{HL15} (see also \cite{FHOO}) for simply-laced finite type and in \cite{JLO2} for arbitrary finite type.

Let $\cmC = (c_{i,j})_{i,j\in I}$ be a finite Cartan matrix and let $\calU_q(\g)$ be the quantum group associated with $\cmC$. 
We set 
$$\hI \seteq I \times \Z.$$

\begin{definition} [{see \cite[Theorem 7.3]{HL15}, \cite[\S 7.1]{JLO2}}] \label{Def: extended qg}
The \emph{bosonic extension} $\hcalA_\g$ of the quantum group $\calU_q(\g)$ is the $\bR$-algebra generated by $\{ f_{i,p} \}_{ (i,p) \in \hI }$
satisfying the following defining relations: for any $i,j \in I$ and $m,p\in \Z$,
\bna
\item  $\displaystyle \sum^{1-c_{i,j}}_{k=0} (-1)^k \left[\begin{matrix}1-c_{i,j} \\ k\\ \end{matrix} \right]_i {f_{i,p}}^{1-c_{i,j}-k} f_{j,p} f_{i,p}^{k} = 0 \quad \text{ for }  i \ne j $,
\item \label{it: relation2} $f_{i,m} f_{j,p} = q_i^{(-1)^{p-m+1}c_{i,j}} f_{j,p} f_{i,m}$  for $p> m+1$,
\item \label{it: relation3} $f_{i,p} f_{j,p+1} = q_i^{ c_{i,j} } f_{j,p+1} f_{i,p} + \delta_{i,j} (1-q_i^2)$.
\ee
\end{definition}
If there is no danger of confusion, we drop $\g$ from the notation $\hcalA_\g$ for simplicity. 

\begin{remark} \label{Rem: q and t} \ 
The $t$-deformed Grothendieck ring of the Hernandez-Leclerc category of a quantum affine algebra $\calU_q'(\hg)$ 
is isomorphic to the bosonic extension $\hcalA_\gf$ of the quantum group $\calU_q(\gf)$ (\cite[Theorem 7.3]{HL15} and  \cite[Theorem 10.4]{FHOO}). Here $\gf$ is the simple Lie algebra of finite simply-laced type given in \cite[Theorem 4.6]{KKOP22} corresponding to the quantum affine algebra   $\calU_q'(\hg)$. 
Note that $q^{1/2}$ in our paper corresponds to $t^{-1/2}$ in \cite[Theorem 7.3]{HL15}.
In Corollary~\ref{cor: q to qinverse} below, we will prove that
the presentation  in~\cite{HL15,JLO2} can be understood as relations among the adjoint operators of $\{ f_{i,p} \}_{(i,p)\in \hI}$.  
\end{remark}

For an interval $\ttJ$ of $\Z$, 
we define $\qbA{\ttJ}$ to be the subalgebra of $\qbA{}$ generated by $ f_{i,p} $ for $i\in I$ and $p\in \ttJ$. 
In particular, we set
$$
\qbA{\; \ge a} \seteq \qbA{[a,\infty]},  \quad  \qbA{\; > a} \seteq \qbA{(a,\infty]}, \quad
\qbA{\le b} \seteq \qbA{[-\infty,b]}, \quad \qbA{< b} \seteq \qbA{[-\infty,b)}.
$$
Defining a weight function by 
\begin{align} \label{Eq: weight}
\wt(f_{i,p}) \seteq (-1)^{p+1} \al_i \qquad  \text{ for any } (i,p) \in \hI,
\end{align}
the defining relations in Definition \ref{Def: extended qg} are homogeneous with respect to weights.
In particular, the second and third relations can be written in terms of weights as follows:
\begin{equation} \label{Eq: defining rel}
\begin{aligned} 
f_{i,m} f_{j,p} &= q^{-( \wt(f_{i,m}),\wt(f_{j,p}))} f_{j,p} f_{i,m}   \qquad  \qquad    \text{for $p> m+1$},   \\
f_{i,p} f_{j,p+1} &= q^{ -(\wt(f_{i,p}), \wt(f_{j, p+1})) } f_{j,p+1} f_{i,p} + \delta_{i,j} (1-q_i^2).
\end{aligned}
\end{equation}

The following lemmas say that $\qbA{}$ is viewed as an infinite tensor product of
$\qbA{[k]} \simeq \Um$'s.

\begin{lemma} \label{Lem: qA decomp} 
Let $ a,b \in \Z $ with $a \le b$.% and set $\ttI=[a,b]$. 
\bnum
\item \label{it: Decom1} As a $\bR$-vector space, we have 
$$
\qbA{[a,b]} \simeq \qbA{[b]} \otimes_{\bR} \qbA{[b-1]} \otimes_{\bR} \cdots \otimes_{\bR}  \qbA{[a]}.  
$$
\item \label{it: Decom2} Let $B_k$ be a $\bR$-linear basis of $\qbA{[k]}$ for each $k\in [a,b]$. Then the set $ B_b \times B_{b-1} \times \cdots \times B_{a}$ is a $\bR$-linear basis of $\qbA{[a,b]}$.
\ee
\end{lemma}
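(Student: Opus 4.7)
The plan is to reduce (ii) to (i) and then prove (i) by showing that the multiplication map
\[
\mu_{a,b} : \qbA{[b]} \otimes_\bR \qbA{[b-1]} \otimes_\bR \cdots \otimes_\bR \qbA{[a]} \longrightarrow \qbA{[a,b]}, \qquad x_b \otimes \cdots \otimes x_a \longmapsto x_b \cdots x_a,
\]
is a $\bR$-linear isomorphism. Once (i) is in hand, (ii) follows immediately, since the pure tensors $b_b \otimes \cdots \otimes b_a$ with $b_k \in B_k$ form a basis of the tensor product and $\mu_{a,b}$ carries them to the products $b_b b_{b-1} \cdots b_a$. I would argue by induction on $b - a \ge 0$, with the base case $a = b$ trivial.

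For surjectivity of $\mu_{a,b}$, call a monomial in the $f_{i,p}$'s \emph{normal} if its $p$-indices weakly decrease from left to right. The plan is to show by double induction, outer on the total degree and inner on the inversion count $\#\{(k,\ell) : k<\ell,\ p_k < p_\ell\}$, that every monomial is a $\bR$-linear combination of normal monomials. To swap an adjacent pair $f_{i,p}f_{j,p'}$ with $p<p'$, relation (b) of Definition~\ref{Def: extended qg} gives a pure $q$-commutation when $p' \ge p+2$, while relation (c) gives a $q$-commutation plus a scalar correction $\delta_{i,j}(1-q_i^2)$ whose total degree is two less than the original monomial, which is handled by the outer induction. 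Normal monomials split as $\mu_{a,b}$-images by construction, so surjectivity follows.

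The injectivity of $\mu_{a,b}$ is the main obstacle, and my plan is a twisted-tensor-product construction combined with a universal-property argument. On the vector space $R := \qbA{[b]} \otimes_\bR \qbA{[a,b-1]}$, I would define an associative multiplication whose rule is dictated by the reordering procedure of the surjectivity step: namely, $(x \otimes y) \cdot (x' \otimes y')$ should equal the element obtained by writing $x \cdot y \cdot x' \cdot y'$ in normal form inside $\qbA{[a,b]}$ and regrouping. Concretely, one pushes every factor of $y$ past $x' \in \qbA{[b]}$ using (b) (pure $q$-commutations, since levels $\le b-1$ differ from $b$ by at least $1$) and (c) (with scalar corrections), and reassembles; the outer induction applied to $\qbA{[a,b-1]}$ ensures this rule is unambiguous on basis elements. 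After verifying that $f_{i,b} \otimes 1$ and $1 \otimes f_{j,p}$ ($a \le p \le b-1$) satisfy the defining relations of $\qbA{[a,b]}$, the universal property yields an algebra homomorphism $\psi : \qbA{[a,b]} \to R$ inverting $\mu_{a,b}$ on generators and hence globally, completing the induction.

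The hard part will be checking associativity of this twisted multiplication on $R$. This amounts to a diamond-lemma-type consistency check for the rewriting system given by (b) and (c): both ways of reducing a triple product must yield the same element of $R$. Because the corrections appearing in (c) are central scalars and the relevant relations involve only two consecutive levels, I expect the confluence check to localize to overlaps among three generators whose $p$-values lie in a window of width at most $2$, and to reduce finally to quadratic identities within a single-level algebra $\qbA{[k]} \simeq \Um$, where consistency is guaranteed by the Serre relation (a) together with the PBW theorem for $\Um$.
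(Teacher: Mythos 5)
Your strategy diverges substantially from the paper's. The paper obtains injectivity of the multiplication map $\phi$ (your $\mu_{a,b}$) by precomposing with the isomorphism $\Theta_Q : \qbA{} \isoto \Q(q^{1/2}) \tens_{\Z[q^{\pm 1/2}]} \mathfrak{K}_q(\g)$ onto the quantum virtual Grothendieck ring and invoking the linear independence of the standard basis $\mathsf{E}_q$ there; it does not prove a normal-form or PBW statement from scratch. You instead propose a self-contained, elementary argument: a normal-form reduction for surjectivity, and an iterated twisted-tensor-product (essentially a Bergman diamond-lemma / iterated Ore-type) construction for injectivity. Both are legitimate routes. Yours has the advantage of being internal to the presentation in Definition~\ref{Def: extended qg} and not relying on the identification with quantum Grothendieck rings; the paper's has the advantage of being one line given the cited references. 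Your surjectivity step (double induction on degree and inversion count, using (b) for a pure $q$-commutation when $p' \ge p+2$ and (c) with the degree-dropping scalar $\delta_{i,j}(1-q_i^2)$ when $p' = p+1$) is correct and is in fact what the paper also uses for surjectivity.

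However, I would flag two points on the injectivity half. First, you explicitly leave unverified the associativity of the twisted product, i.e.\ the diamond-lemma confluence of the rewriting system --- this is the genuine content of the lemma and cannot be deferred. Second, your expectation that all ambiguities ``reduce finally to quadratic identities within a single-level algebra $\qbA{[k]} \simeq \Um$'' is too optimistic as stated: the overlaps among (a)--(c), (b)--(c), and (c)--(c) genuinely involve two or three consecutive levels (e.g., a Serre word in $f_{i,p}$'s overlapping with $f_{i,p} f_{j,p+1}$, or the triple $f_{i,p} f_{j,p+1} f_{k,p+2}$), and do not all collapse to a single level. These multi-level overlaps are essentially the consistency checks underlying the $q$-boson algebra $\calB_q(\g) \simeq \qbA{[k,k+1]}$ of Remark~\ref{Rmk: Aq[k] and Aq[k+1]}(3) and its iterates; they are known to resolve, but that is the hard part of your approach and would need to be carried out (or cited) rather than anticipated. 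With that caveat, the strategy would yield a correct alternative proof, arguably more robust in that it avoids dependence on the quantum Grothendieck ring machinery.
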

\begin{proof}
Since~\eqref{it: Decom2} is a direct consequence of~\eqref{it: Decom1}, it suffices to prove~\eqref{it: Decom1}.
 Define a $\bR$-linear map 
$$ 
\phi: \qbA{[b]} \otimes_{\bR} \qbA{[b-1]} \otimes_{\bR} \cdots \otimes_{\bR}  \qbA{[a]} \longrightarrow \qbA{[a,b]}  
$$
by sending $ x_b \tens x_{b-1} \tens \cdots  \tens x_a  \mapsto x_{b}x_{b-1} \cdots x_{a}$ for $x_k \in \qbA{[k]}$ for $k\in [a,b]$. 
By the defining relations~\eqref{it: relation2} and~\eqref{it: relation3} of Definition \ref{Def: extended qg}, it is easy to see that $\phi$ is surjective. 

On the other hand, we fix a Dynkin quiver $Q$ of the Cartan matrix $\cmC$ and consider the isomorphism $\Theta_Q$  given in \cite[Definition 2.6, Theorem 7.2]{JLO2} (see \cite[Theorem 7.3]{HL15} for symmetric cases): 
$$
\Theta_Q:  \qbA{} \buildrel \sim \over \longrightarrow  \Q(q^{1/2}) \tens_{\Z[q^{\pm 1/2}]} \mathfrak{K}_q (\g),  
$$ 
where $\mathfrak{K}_q (\g)$ is the quantum virtual Grothendieck ring (see \cite[Definition 2.6]{JLO2}). We then have the commutative diagram
\begin{equation} \label{Eq: diagram for Kq(Cg)}
\begin{aligned} 
\xymatrix{
\qbA{[b]} \otimes_{\bR} \cdots \otimes_{\bR}  \qbA{[a]} \ar@{->>}[r]^{ \quad\qquad \phi} \ar[rd]_{ \Theta_Q \circ \phi }  & \qbA{[a,b]} \ar@{>->}[d]^{\Theta_Q|_{ \qbA{[a,b]}  }} \\
& \Q(q^{1/2}) \tens_{\Z[q^{\pm 1/2}]} \mathfrak{K}_q (\g)
}.
\end{aligned}
\end{equation}
It follows from the standard basis $\mathsf{E_q}$ of $\mathfrak{K}_q (\g)$  (see \cite[proof of Theorem 7.3]{HL15} for symmetric cases and \cite[Section 2.5]{JLO2} for general cases) that the composition $\Theta_Q \circ \phi$ is injective. This implies that $\phi$ is an isomorphism. 
\end{proof}

In the proof of Lemma \ref{Lem: qA decomp}, when $a=b$, the composition $\Theta_Q \circ \phi$ in the diagram \eqref{Eq: diagram for Kq(Cg)} gives an isomorphism between $\qbA{[a]} $ and the subalgebra $ \Q(q^{1/2}) \tens_{\Z[q^{\pm 1/2}]}\mathfrak{K}_{q, Q}  $ of $ \Q(q^{1/2}) \tens_{\Z[q^{\pm 1/2}]} \mathfrak{K}_q (\g)$.
Thus the following lemma follows from \cite[Theorem 6.1]{HL15} and \cite[Corollary 5.3]{JLO2}.

\begin{lemma} \label{Lem: id A and U}
For any $k\in \Z$, there is  a $\bfk$-algebra isomorphism 
$$
(\qbA{\g})_{[k]} \simeq \calU_q^-(\g)
$$ sending $ f_{i,k} $ to $f_i$ for any $i\in I$.  
\end{lemma}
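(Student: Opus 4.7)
The plan is to construct a well-defined surjective $\bfk$-algebra homomorphism $\psi_k \colon \calU_q^-(\g) \to \qbA{[k]}$ sending $f_i \mapsto f_{i,k}$, and then to derive injectivity from the identification already established in the proof of Lemma \ref{Lem: qA decomp}.

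For well-definedness and surjectivity, I would inspect the defining relations of $\qbA{\g}$ in Definition \ref{Def: extended qg} and observe that among relations (a)--(c), only (a) involves generators whose second indices coincide; relations (b) and (c) require the second indices to differ. Specializing (a) at $p = k$ shows that the generators $\{f_{i,k}\}_{i \in I}$ of $\qbA{[k]}$ satisfy precisely the quantum Serre relations among $\{f_i\}_{i\in I}$. By the defining presentation of $\calU_q^-(\g)$, this provides a $\bfk$-algebra homomorphism $\psi_k$ sending $f_i \mapsto f_{i,k}$, and $\psi_k$ is tautologically surjective because $\qbA{[k]}$ is generated by its image.

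For injectivity I would invoke the paragraph immediately preceding the lemma. Specializing the proof of Lemma \ref{Lem: qA decomp} to $a = b = k$, the map $\phi$ of that proof reduces to an inclusion, and the commutative triangle \eqref{Eq: diagram for Kq(Cg)} tells us that the restriction $\Theta_Q|_{\qbA{[k]}}$ is an injective $\bfk$-algebra homomorphism from $\qbA{[k]}$ into $\Q(q^{1/2}) \tens_{\Z[q^{\pm 1/2}]} \mathfrak{K}_q(\g)$, with image contained in the subalgebra $\Q(q^{1/2}) \tens_{\Z[q^{\pm 1/2}]} \mathfrak{K}_{q,Q}$. By \cite[Theorem 6.1]{HL15} for the symmetric case and \cite[Corollary 5.3]{JLO2} for general finite type, this subalgebra is isomorphic to $\calU_q^-(\g)$ via an isomorphism that identifies the generators of $\mathfrak{K}_{q,Q}$ with $f_i \in \calU_q^-(\g)$. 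Composing, we obtain a left inverse to $\psi_k$, which forces $\psi_k$ to be injective.

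The only delicate point, hence the main bookkeeping obstacle, is checking that the composite $\calU_q^-(\g) \xrightarrow{\psi_k} \qbA{[k]} \xrightarrow{\Theta_Q} \mathfrak{K}_{q,Q}$ carries $f_i$ to the generator of $\mathfrak{K}_{q,Q}$ that corresponds to $f_i$ under the cited isomorphisms; this compatibility is, however, already encoded in the construction of $\Theta_Q$ in \cite{HL15, JLO2}, so no additional argument is needed and the lemma follows.
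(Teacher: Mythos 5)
Your proposal is correct and uses essentially the same argument as the paper: the paper also obtains the isomorphism by composing the injection $\Theta_Q|_{\qbA{[k]}}$ (coming from the $a=b$ case of the proof of Lemma~\ref{Lem: qA decomp}) with the identification $\Q(q^{1/2})\tens_{\Z[q^{\pm 1/2}]}\mathfrak{K}_{q,Q}\simeq \calU_q^-(\g)$ from \cite[Theorem 6.1]{HL15} and \cite[Corollary 5.3]{JLO2}. Your additional step of constructing $\psi_k$ directly from the presentation and then exhibiting a left inverse is a modest reorganization rather than a different route.
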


\begin{definition} For an interval $[a,b]$,
we call $x \in \qbA{[a,b]} $ \emph{homogeneous} if $x$ can be written as $x=x_{b} x_{b-1} \cdots x_{a}$ such that each $x_k $ is contained in a weight space $(\qbA{[k]})_{\beta}$ of $\qbA{[k]}$ for some $\beta \in \rlQ^{\pm}$. 
\end{definition}
By Lemma \ref{Lem: qA decomp}, any element $x$ of $ \qbA{[a,b]}$ can be written as a sum of homogeneous elements, i.e., 
$$ 
x = \sum_{t}  x_{b, t} x_{b-1, t} \cdots x_{a, t}, 
$$ 
where $x_{k, t}$ is an element in a weight space of $\qbA{[k]}$ and $t$ runs over a finite index set.

\begin{remark} \label{Rmk: Aq[k] and Aq[k+1]} \
\bnum
\item Thanks to \eqref{Eq: weight}, one can consider weight spaces $((\qbA{\g})_{[k]})_\beta$ of $(\qbA{\g})_{[k]}$ of weight $\beta \in \rl_\pm$. Via the isomorphism $(\qbA{\g})_{[k]} \simeq \calU_q^-(\g)$ given in Lemma \ref{Lem: id A and U}, the weight space $((\qbA{\g})_{[k]})_\beta$ corresponds to $\calU_q^-(\g)_{(-1)^k\beta}$.
\item The operators $e_i'$ and $\es_i$ in \eqref{Eq: ei ei*} act on $(\qbA{\g})_{[k]}$ for any $k\in \Z$ via the isomorphism $(\qbA{\g})_{[k]} \simeq \calU_q^-(\g)$ given in Lemma \ref{Lem: id A and U}.
This will be used several times throughout the paper.
\item 
The negative half $\calU_q^-(\g)$ is a module over the $q$-boson algebra $\calB_q(\g)$ (see Section \ref{Subsection: bf and qba}). 
It is easy to show that, for any $k\in \Z$, there is a $\bfk$-algebra isomorphism  
\begin{align} \label{Eq: psi_p}
\psi_k : \calB_q(\g) \buildrel \sim \over \longrightarrow  \qbA{[k,k+1]}  
\end{align}
sending $ \bbf_i  \mapsto f_{i,k}  $ and $ \bbe'_i  \mapsto  \frac{q_i^2}{ q_i^{2}-1 } f_{i,k+1}$. 
\ee
\end{remark}

\begin{lemma} \label{Lem: x f_i,k+1}
For any $i\in I$ and $k\in \Z$, we have 
$$
 x f_{i,k+1} =  q_i^{ - \langle h_i,  \wt(x) \rangle  } ( f_{i,k+1}  x  +  q_i^{-1} ( q_i^{-1} - q_i)  e'_i (x) ) \qquad \text{ for any $x \in \qbA{[k]}$.} 
$$
\end{lemma}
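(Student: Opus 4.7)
The plan is to transport the identity to the $q$-boson algebra $\calB_q(\g)$ via the isomorphism $\psi_k \colon \calB_q(\g) \isoto \qbA{[k,k+1]}$ of \eqref{Eq: psi_p}. Under $\psi_k$, the subalgebra $\qbA{[k]}$ is identified with $\calU_q^-(\g) \subset \calB_q(\g)$, the element $\tfrac{q_i^2}{q_i^2-1}\,f_{i,k+1}$ corresponds to the boson generator $\bbe_i'$, and the action of $e_i'$ on $\qbA{[k]}$ is exactly the $q$-derivation of \eqref{Eq: ei ei*} on $\calU_q^-(\g)$.

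Working inside $\calB_q(\g)$, I would first establish the commutation formula
$$
\bbe_i'\, y \;=\; q_i^{\langle h_i,\,\wt(y)\rangle}\, y\, \bbe_i' \;+\; e_i'(y)
$$
for every homogeneous $y \in \calU_q^-(\g)$. This is a routine induction on $\het(\wt(y))$: the base case $y = \bbf_j$ is the defining relation $\bbe_i'\bbf_j = q_i^{-\langle h_i,\al_j\rangle}\bbf_j\bbe_i' + \delta_{i,j}$, and the inductive step combines the induction hypothesis with the twisted Leibniz rule for $e_i'$ recorded in \eqref{Eq: ei ei*}.

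Rewriting the above identity as $y\,\bbe_i' = q_i^{-\langle h_i,\wt(y)\rangle}\bigl(\bbe_i'\,y - e_i'(y)\bigr)$, applying $\psi_k$, and then multiplying through by $\tfrac{q_i^2-1}{q_i^2}$ to clear the common scalar yields
$$
x\,f_{i,k+1} \;=\; q_i^{-\langle h_i,\wt(x)\rangle}\!\left( f_{i,k+1}\, x \;-\; \tfrac{q_i^2-1}{q_i^2}\, e_i'(x) \right),
$$
and the stated form is recovered from the scalar identity $-\tfrac{q_i^2-1}{q_i^2} = q_i^{-2}-1 = q_i^{-1}(q_i^{-1}-q_i)$. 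No substantive obstacle is expected; the only care required lies in tracking the scalar $\tfrac{q_i^2}{q_i^2-1}$ introduced by $\psi_k(\bbe_i')$ and in interpreting $\wt(x)$ as the weight carried by $x$ under the identification $\qbA{[k]} \simeq \calU_q^-(\g)$ consistent with the definition of $e_i'$.
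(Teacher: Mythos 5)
Your proposal is correct and follows essentially the same route as the paper: both transport the problem through $\psi_k$ to the $q$-boson algebra, use the commutation identity $\bbe_i' y = q_i^{\langle h_i,\wt(y)\rangle} y\,\bbe_i' + e_i'(y)$ in $\calB_q(\g)$ (which the paper cites directly from~\eqref{Eq: ei ei*} where you spell out the induction), and then track the scalar $\tfrac{q_i^2}{q_i^2-1}$ attached to $\psi_k(\bbe_i')$. Your parenthetical caution that $\wt(x)$ must be read as the $\calU_q^-(\g)$-weight under the identification of Lemma~\ref{Lem: id A and U} is apt and matches the convention implicit in the paper's proof.
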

\begin{proof}
Let $U$ be the subalgebra of the $q$-boson algebra $\calB_q(\g)$ generated by $ \bbf_i$ ($i\in I$), and for the sake of simplicity, we identify $U$ with $\calU_q^-(\g)$ via the map defined by $\bbf_i \mapsto f_i$ ($i\in I$). 
By \eqref{Eq: ei ei*}, we have 
$$
 \bbe_i' x = q_i^{ \langle h_i,  \wt(x) \rangle  } x \bbe'_i + e'_i (x)  \qquad \text{ for any $x\in U$.}
$$ 
Thus the assertion follows by pushing the above identity via the isomorphism $\psi_k$ given in \eqref{Eq: psi_p}. 
\end{proof}

From Definition \ref{Def: extended qg}, 
we have the following automorphisms on $\qbA{}$:
\bna
\item the $\bfk$-algebra anti-automorphism $*:\qbA{} \buildrel \sim \over \longrightarrow  \qbA{}$ is defined by
$$
 f_{i,p}^* = f_{i, -p}  \quad \text{for any $(i,p) \in \hI$,}
$$
\item for a Dynkin diagram automorphism $\sigma$ of $\g$, the $\bfk$-algebra automorphism $\sigma:\qbA{} \buildrel \sim \over \longrightarrow  \qbA{}$ is defined by
$$
\sigma(f_{i,p}) = f_{ \sigma(i), p} \quad \text{for any $(i,p) \in \hI$,}
$$
\item the $\Q$-algebra anti-automorphism $-:\qbA{} \buildrel \sim \over \longrightarrow  \qbA{}$ is given by
$$
\overline{f_{i,p}}=f_{i,p} \text{ for any $(i,p) \in \hI$} \qtq \overline{q^{1/2}}=q^{-1/2},
$$
\item the $\bfk$-algebra  automorphism $\dD : \qbA{} \buildrel \sim \over \longrightarrow  \qbA{} $ is given by 
$$\dD(f_{i,p}) = f_{i, p+1} \quad \text{ for any $(i,p) \in \hI$.}$$
\ee
One can check that
\begin{align*}
&\dD \circ \sigma = \sigma \circ \dD, \quad \dD \circ  - = - \circ \dD, 
\quad * \circ \dD =  \dD^{-1} \circ *, \\ 
& * \circ \ \sigma =  \sigma \circ *, \quad \ \  * \circ - =  - \circ *, \ \quad 
 \sigma \circ - =  - \circ \sigma.
\end{align*}

\vskip 2em 

\section{Braid group actions and bilinear forms} \label{Sec: bf and bg}

In this section, we investigate the
braid group actions on $\qbA{}$ introduced in~\cite{KKOP21A} for simply-laced finite type and in~\cite{JLO2} for arbitrary finite type. We then introduce and study a non-degenerate symmetric bilinear form on $\qbA{}$ which is invariant under the braid group actions. 

As the previous sections, let $\cmC = (c_{i,j})_{i,j\in I}$ be a finite Cartan matrix and let $\qbA{}$ be the bosonic extension associated with $\cmC$. 

\subsection{Braid group actions}
For any $i\in I$, the $\bfk$-automorphism $\bT_i: \qbA{} \longrightarrow \qbA{}$ defined by 
\begin{align*}
\bT_i(f_{j,p}) \seteq
\begin{cases}
f_{j,p+\delta_{i,j}} & \text{ if } c_{i,j} \ge 0,\\
\displaystyle \frac{   \sum_{r+s = -c_{i,j}}  (-1)^{ r} q_i^{ c_{i,j}/2 + r} f_{i,p}^{(s)} f_{j,p} f_{i,p}^{(r)}   }{ (q_i^{-1} - q_i)^{-c_{i,j}}} & \text{ if } c_{i,j}< 0,
\end{cases}
\end{align*}
is introduced in \cite[Theorem 2.3]{KKOP21A} for simply-laced finite types and \cite[Section 8.1]{JLO2} for non simply-laced finite types. 
The inverse of $\bT_i$ is given as follows: 
\begin{align*}
\bT_i^{-1}(f_{j,p}) =
\begin{cases}
f_{j,p-\delta_{i,j}} & \text{ if } c_{i,j} \ge 0,\\
\displaystyle \frac{ \sum_{r+s = -c_{i,j}}  (-1)^{ r} q_i^{  c_{i,j}/2 + r} f_{i,p}^{(r)} f_{j,p} f_{i,p}^{(s)}   }{ (q_i^{-1} - q_i)^{-c_{i,j}}  } & \text{ if } c_{i,j} < 0.
\end{cases}
\end{align*}

\begin{lemma} \label{Lem: Ti and others}
For any $i\in I$, we have the followings$:$ 
\bnum
\item \label{it: d T} $ \dD \circ \bT_i = \bT_i \circ \dD $.
\item \label{it: star T} $\bT_i^{-1} = * \circ \bT_i \circ * $.
\item \label{it: sigma T} $\sigma \circ \bT_i =  \bT_{\sigma(i)} \circ \sigma $ for any Dynkin diagram automorphism $\sigma$ of $\g$.
\item \label{it: bar T} $\bT_i \circ - =   - \circ  \bT_i $.
\ee
\end{lemma}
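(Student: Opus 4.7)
The plan is to verify each of the four identities directly on the generators $f_{j,p}$ of $\qbA{}$. In every case, both sides of the claimed identity are $\bfk$- or $\Q$-algebra (anti-)homomorphisms of $\qbA{}$ (for \eqref{it: star T} the two $*$'s restore automorphism character; for \eqref{it: bar T} both sides are $\Q$-algebra anti-automorphisms), so agreement on the generating set forces agreement everywhere. Each identity then splits into the trivial case $c_{i,j}\ge 0$, where $\bT_i$ acts by a mere index shift, and the genuine case $c_{i,j}<0$, where one inspects the divided-power formula.

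For \eqref{it: d T} the coefficients in the formula for $\bT_i(f_{j,p})$ do not depend on $p$, and $\dD$ acts uniformly on the second subscript of every factor $f_{i,p}^{(s)},f_{j,p},f_{i,p}^{(r)}$, so the equality is immediate. For \eqref{it: sigma T} a Dynkin diagram automorphism $\sigma$ preserves $c_{i,j}$ and $[n]_{q_i}$, so applying $\sigma$ term by term to $\bT_i(f_{j,p})$ produces precisely the defining formula for $\bT_{\sigma(i)}(f_{\sigma(j),p})$. For \eqref{it: star T}, since $*$ is an anti-automorphism with $f_{i,p}^*=f_{i,-p}$, the composition $*\circ\bT_i\circ *$ applied to $f_{j,p}$ first yields $\bT_i(f_{j,-p})$ and then reverses the triple product $f_{i,-p}^{(s)}f_{j,-p}f_{i,-p}^{(r)}$ into $f_{i,p}^{(r)}f_{j,p}f_{i,p}^{(s)}$; the resulting expression matches the stated formula for $\bT_i^{-1}(f_{j,p})$ on the nose.

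The only place requiring a little bookkeeping is \eqref{it: bar T}. The bar involution fixes each $f_{i,p}$, reverses multiplication, and satisfies $\overline{q^{1/2}}=q^{-1/2}$; in particular $\overline{[r]_i!}=[r]_i!$, so $\overline{f_{i,p}^{(r)}}=f_{i,p}^{(r)}$. Applying bar to $\bT_i(f_{j,p})$ in the case $c_{i,j}<0$ reverses each triple product to $f_{i,p}^{(r)}f_{j,p}f_{i,p}^{(s)}$, replaces $q_i^{c_{i,j}/2+r}$ by $q_i^{-c_{i,j}/2-r}$, and produces an overall sign $(-1)^{-c_{i,j}}$ in the denominator from $(q_i^{-1}-q_i)\mapsto -(q_i^{-1}-q_i)$. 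Relabeling $r\leftrightarrow s$ (permissible since $r+s=-c_{i,j}$) and using that $(-1)^{s}q_i^{-c_{i,j}/2-s}=(-1)^{-c_{i,j}}(-1)^{r}q_i^{c_{i,j}/2+r}$ when $r+s=-c_{i,j}$, all sign and exponent shifts cancel and the expression returns to $\bT_i(f_{j,p})=\bT_i(\overline{f_{j,p}})$. This sign-tracking is the one potential obstacle, but it is entirely formal.
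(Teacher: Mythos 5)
Your proof is correct and follows essentially the same route as the paper: check each identity on the generators $f_{j,p}$ (invoking the (anti-)homomorphism property of both sides), with the only nontrivial step being the sign and exponent bookkeeping in (4), which the paper carries out by the same relabeling $r\leftrightarrow s$ combined with $(-1)^{r}(-1)^{-c_{i,j}}=(-1)^{s}$ and $q_i^{-c_{i,j}/2-r}=q_i^{c_{i,j}/2+s}$ when $r+s=-c_{i,j}$. Your treatment of (1)--(3) is more explicit than the paper's one-line dismissal, but the argument is the same.
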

\begin{proof}
Since~\eqref{it: d T},~\eqref{it: star T} and~\eqref{it: sigma T} follow from the definitions directly, we only deal with~\eqref{it: bar T}. 

As the case where $i=j$ is obvious, we assume that $i\ne j$. Then we have 
\begin{align*}
\overline{\bT_i(f_{j,p})} &=  
\frac{   \sum_{r+s = -c_{i,j}}  (-1)^{ r} q_i^{ -c_{i,j}/2 - r} f_{i,p}^{(r)} f_{j,p} f_{i,p}^{(s)}   }{ (q_i - q_i^{-1})^{-c_{i,j}}} \allowdisplaybreaks\\
& = 
\frac{   \sum_{r+s = -c_{i,j}}  (-1)^{ s} q_i^{ c_{i,j}/2 + s} f_{i,p}^{(r)} f_{j,p} f_{i,p}^{(s)}   }{ (q_i^{-1} - q_i)^{-c_{i,j}}} \allowdisplaybreaks\\
&= \bT_i(f_{j,p}),
\end{align*}
which implies~\eqref{it: bar T}.
\end{proof}

\begin{proposition} [{\cite[Theorem 2.3]{KKOP21A} and \cite[Thoerem 8.1]{JLO2}}] \label{Prop: Ti braid}
The automorphisms $\bT^{\pm1}_i$ $(i\in I)$ satisfy the braid group relations for $\ttB_\cmC$.
\end{proposition}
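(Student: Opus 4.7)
The plan is to verify the braid relations
$\underbrace{\bT_i \bT_j \bT_i \cdots}_{m(i,j)\text{ factors}} = \underbrace{\bT_j \bT_i \bT_j \cdots}_{m(i,j)\text{ factors}}$
by direct computation on each generator $f_{k,p}$; since each $\bT_i$ is an algebra automorphism, this is sufficient. The key structural observation is that $\bT_i$ acts almost locally with respect to the tensor decomposition of Lemma \ref{Lem: qA decomp}: for $k \ne i$ with $c_{i,k} \ge 0$, it fixes $f_{k,p}$; for $c_{i,k} < 0$, it sends $f_{k,p}$ to a scalar multiple of Lusztig's braid symmetry $\bS_i(f_{k,p})$ computed inside the slice $\qbA{[p]} \simeq \calU_q^-(\g)$ via the identification of Lemma \ref{Lem: id A and U}; and only for $k = i$ does the slice index shift, via $\bT_i(f_{i,p}) = f_{i,p+1}$.

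The first reduction is to rank two: whenever $k \notin \{i,j\}$ satisfies $c_{i,k} = c_{j,k} = 0$, both sides fix $f_{k,p}$, so one may restrict attention to generators $f_{k,p}$ whose index $k$ interacts with $\{i,j\}$. In the trivial case $m(i,j) = 2$ (i.e., $c_{i,j} = c_{j,i} = 0$), the operators $\bT_i$ and $\bT_j$ affect essentially disjoint data, and commutativity follows routinely from the quasi-commutation relations in Definition \ref{Def: extended qg}\eqref{it: relation2}--\eqref{it: relation3}. For the nontrivial cases $m(i,j) \in \{3, 4, 6\}$, one computes the composition on $f_{i,p}$, $f_{j,p}$, and on $f_{k,p}$ for those $k$ with $c_{i,k} c_{j,k} \ne 0$, using the defining relations together with the $q$-Serre identities.

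The main obstacle is that intermediate expressions in $\bT_i \bT_j \bT_i \cdots (f_{k,p})$ generally span multiple slices $\qbA{[p']}$, due to the shifts $\bT_i(f_{i,p}) = f_{i,p+1}$ that occur whenever a $\bT_i$ lands on an $f_{i,\bullet}$ created by a previous $\bT_j$. Consequently, one cannot simply invoke Lusztig's braid relations for $\bS_i$ on a single copy $\calU_q^-(\g)$. A careful bookkeeping is needed, combining \textbf{(i)} Lusztig's braid identities applied to the ``local'' Serre-type factors lying in a single $\qbA{[p]}$, \textbf{(ii)} the cross-slice quasi-commutativity in Definition \ref{Def: extended qg}\eqref{it: relation2}--\eqref{it: relation3}, and \textbf{(iii)} the symmetries $\dD \circ \bT_i = \bT_i \circ \dD$ and $\bT_i^{-1} = * \circ \bT_i \circ *$ from Lemma \ref{Lem: Ti and others}, which reduce the case analysis by a factor of two (turning certain relations into their duals under $*$). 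The explicit computation is carried out in \cite[Theorem 2.3]{KKOP21A} for simply-laced $\cmC$ and extended in \cite[Theorem 8.1]{JLO2} to arbitrary finite types, and we content ourselves with citing these results.
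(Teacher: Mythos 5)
The paper offers no proof of this proposition: it is stated as a pure citation to \cite[Theorem 2.3]{KKOP21A} and \cite[Theorem 8.1]{JLO2}, and your proposal, after sketching (accurately, if informally) the kind of slice-by-slice bookkeeping those proofs require, likewise ends by deferring to the same two references. So the approach coincides with the paper's; the expository preamble about cross-slice shifts and the $*$-duality is useful context but not a substitute for the cited computations, and you are right not to claim otherwise.
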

For any $\ii = (i_1, \ldots, i_t) \in I^t$, we set 
$$
\bT_\ii \seteq \bT_{i_1} \bT_{i_2} \cdots \bT_{i_t}.
$$
Thanks to Proposition \ref{Prop: Ti braid}, the followings are well-defined:
\begin{align*}
\bT_{\ttb^{\pm}} & \seteq \bT^{\pm1}_{i_1} \bT^{\pm1}_{i_2} \cdots \bT^{\pm1}_{i_n} && \text{ for any $(i_1,\ldots,i_n) \in \Seq(\ttb^{\pm})$  of $\ttb^\pm \in \ttB_\sfC^{\pm}$}, \\
\bT_{w} & \seteq \bT_{j_1} \bT_{j_2} \cdots \bT_{j_r} && \text{ for any $(j_1,\ldots,j_r) \in R(w)$ of $w\in \weyl$}.
\end{align*}

For any $i\in I$, we set 
\begin{align} \label{Eq: const}
\cC_i \seteq q_i^{1/2} (q_i^{-1} - q_i).
\end{align}
We simply write $\cC $ for $ \cC_i$ when $\cmC$ is of symmetric type.
If $\cmC$ is of non-symmetric type, we often write $\cC_s \seteq \cC_i$ (resp.\ $\cC_l \seteq \cC_i$) when $\al_i$ is short (resp.\ long).  
Note that, for $i,j\in I$ with $c_{i,j} < 0$,
\begin{equation} \label{Eq: Ti fjp}
\begin{aligned} 
\bT_i(f_{j,p}) =  \cC_i^{c_{i,j}} \sum_{r+s = -c_{i,j}}  (-1)^{ r} q_i^{r} f_{i,p}^{(s)} f_{j,p} f_{i,p}^{(r)}, \\
\bT_i^{-1}(f_{j,p}) =  \cC_i^{c_{i,j}} \sum_{r+s = -c_{i,j}}  (-1)^{ r} q_i^{r} f_{i,p}^{(r)} f_{j,p} f_{i,p}^{(s)}.
\end{aligned}
\end{equation}

\begin{proposition} \label{Prop: T and S} 
Let $k\in \Z$.
Under the identification $(\qbA{\g})_{[k]} \simeq \calU_q^-(\g)$ given in {\rm Lemma \ref{Lem: id A and U}}, we have 
$$
\bT_i(x) = \cC_i^{  \langle h_i,  \abs {\wt(x)} \rangle } \bS_i(x)
$$
for any homogeneous element $x \in (\qbA{\g})_{[k]}$ with $  e'_i(x)=0$. Here, $\abs{\beta}$ is defined in \eqref{eq: height and obe}.
\end{proposition}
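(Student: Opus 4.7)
The plan is to introduce the renormalized map
$$
\Psi_i(x) := \cC_i^{-\langle h_i, \abs{\wt(x)}\rangle}\bT_i(x)
$$
on homogeneous elements of $(\qbA{\g})_{[k]}$ and to prove that $\Psi_i(x) = \bS_i(x)$ (under the identification of Lemma~\ref{Lem: id A and U}) for every homogeneous $x$ with $e'_i(x) = 0$. First I would show that $\Psi_i$ is multiplicative: by the weight convention~\eqref{Eq: weight} and Remark~\ref{Rmk: Aq[k] and Aq[k+1]}(1), every homogeneous element of $(\qbA{\g})_{[k]}$ has weight lying in the single signed cone $(-1)^{k+1}\rl_+$, so $\abs{\wt(xy)} = \abs{\wt(x)} + \abs{\wt(y)}$ for homogeneous $x,y$ and the scalar prefactor splits multiplicatively. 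Combined with Proposition~\ref{Prop: Ti braid} (which says that $\bT_i$ is an algebra automorphism), this makes $\Psi_i$ an algebra homomorphism; likewise $\bS_i$ is one.

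Next I would verify the identity on the generators $f_{j,k}$ with $j \ne i$ by directly comparing \eqref{Eq: Ti fjp} and \eqref{eq: Si}: when $c_{i,j} = 0$ both sides equal $f_j$ and $\cC_i^0 = 1$, while when $c_{i,j} < 0$ the sums agree term by term and the overall factor $\cC_i^{c_{i,j}} = \cC_i^{\langle h_i, \al_j\rangle}$ matches precisely what appears in $\Psi_i$. Multiplicativity then propagates the equality $\Psi_i = \bS_i$ to the whole subalgebra $\calU'$ of $\Um$ generated by $\{f_{j,k} : j \ne i\}$.

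The final and technically delicate step is to extend the equality from $\calU'$ to all of ${}_i\calU$. In general ${}_i\calU \supsetneq \calU'$ (for instance $\bS_i^{-1}(f_j) \in {}_i\calU$ for $j \ne i$ need not be a polynomial in the $f_{j',k}$ with $j' \ne i$), so multiplicativity alone is insufficient. The plan is to argue by induction on $\het(\abs{\wt(x)})$: decompose a general $x \in {}_i\calU$ as $x = \sum_j f_{j,k}\,y_j$ with $y_j \in (\qbA{\g})_{[k]}$, use the constraint $e'_i(x) = 0$ to express the problematic summand indexed by $j = i$ in terms of $e'_i$-images of the remaining $y_j$'s, and apply Lemma~\ref{Lem: x f_i,k+1}, which under the hypothesis $e'_i(x) = 0$ degenerates to the clean $q$-commutation $x f_{i,k+1} = q_i^{-\langle h_i, \wt(x)\rangle} f_{i,k+1}\,x$. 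This should allow one to show that the potential $f_{i,k+1}$-contributions arising from $\bT_i(f_{i,k}) = f_{i,k+1}$ cancel inside $\bT_i(x)$, so that $\bT_i(x)$ in fact lies in $(\qbA{\g})_{[k]}$ and equals $\cC_i^{\langle h_i, \abs{\wt(x)}\rangle}\bS_i(x)$.

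The hardest part will be precisely this last step: establishing the required cancellation of the $f_{i,k+1}$-terms in $\bT_i(x)$ for general $x \in {}_i\calU$ demands careful bookkeeping of the $q$-commutations together with the inductive hypothesis applied to the lower-weight elements $e'_i(y_j)$.
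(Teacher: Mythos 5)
You correctly diagnose the crux of the problem: multiplicativity of the renormalized map $\Psi_i$ plus verification on the generators $f_{j,k}$ ($j\ne i$) only handles the subalgebra $\calU'$ generated by those elements, and $_i\calU$ is strictly larger (your example $\bS_i^{-1}(f_j)\in{}_i\calU$ is apt). However, your proposal for closing this gap is genuinely incomplete, and I do not see how it could be completed along the lines sketched. In the decomposition $x=\sum_j f_{j,k}\,y_j$ the elements $y_j$ are \emph{not} in $_i\calU$, so an induction on $\het(\abs{\wt(x)})$ gives you no inductive hypothesis to apply to them; you would have to iterate the decomposition indefinitely, and the constraint $y_i=-\sum_j q_i^{-\langle h_i,\al_j\rangle}f_{j,k}\,e_i'(y_j)$ coming from $e_i'(x)=0$ is recursive rather than closing. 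Moreover Lemma~\ref{Lem: x f_i,k+1} applied to $x$ itself only tells you how $x$ $q$-commutes past $f_{i,k+1}$; it gives no direct control over the shape of $\bT_i(x)$, and in particular does not by itself force the putative $f_{i,k+1}$-contributions in $\bT_i(x)=\sum_j\bT_i(f_{j,k})\bT_i(y_j)$ to cancel. You acknowledge this is the hardest part and leave it as ``careful bookkeeping,'' which means the proof is not there.

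The paper's proof avoids this obstacle by choosing a much better generating set of $_i\calU$: the PBW root vectors $x_t=\calF^{\bfi_\circ'}_t$ for a reduced word $\bfi_\circ'=(i_2,\dots,i_\ell,i^*)$ of $w_\circ$ starting just after $i$. It then runs an induction on $\het(\be_t)$ via minimal pair decompositions, invoking the commutator identity $x_ax_b-q^{-(\be_a,\be_b)}x_bx_a=[p_{\be_b,\be_a}+1]\,x_t$ of Brundan--Kleshchev--McNamara. The only case where $f_i$ appears in a factor is $x_b=f_i=x_\ell$, and there the computation is pinned down by the explicit formulas of Lemma~\ref{Lem: Py} (inside $\calU_q^-(\g)$) and Lemma~\ref{Lem: x f_i,k+1} (inside $\qbA{}$), matched coefficient-by-coefficient. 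This makes the inductive step airtight because each $x_a,x_b$ in a minimal pair of $x_t$ is again a PBW root vector for the same word, hence lies in $_i\calU$ with strictly smaller height, so the inductive hypothesis genuinely applies. If you want to salvage your approach, the right move is not to decompose $x$ as $\sum_j f_{j,k}y_j$, but to replace the generating set $\{f_j:j\ne i\}$ of $\calU'$ by the PBW root vectors of $_i\calU$ and use the LS/BKM commutation to reduce heights.
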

\begin{proof}
Since $\bT_i$ commutes with $\dD$, we may assume that $k=0$. 
We denote by $\phi : \calU_q^-(\g) \buildrel \sim \over \longrightarrow \qbA{[0]} $ the isomorphism given in Lemma \ref{Lem: id A and U}, and set ${_i}\mathcal{A} \seteq \phi ({_i}\calU) $ and  $\mathcal{A}_i \seteq \phi (\calU_i) $, where ${_i}\calU$ and $\calU_i$ are given in \eqref{eq: Ui}.
If no confusion arises, we identify $\calU_q^-(\g)$ with $\qbA{[0]}$ via $\phi$.

Let $ \bfic = (i_1, i_2, \ldots, i_\ell) \in R(w_\circ) $ with $i_1 = i$, and set $\bficp \seteq (i_2, \ldots, i_\ell, i^*) $. Note that $\bficp \in R(w_\circ)$. For $t=1,2, \ldots, \ell$, define  
\begin{align*}
y_t \seteq \calF^{\bfic}_t, \quad \gamma_t \seteq - \wt(y_t)= \rvt{\bfic}{t} \quad \text{ and } \quad 
x_t \seteq  \calF^{\bficp}_t, \quad \beta_t \seteq -\wt(x_t)= \rvt{\bficp}{t}. 
\end{align*}
where $\calF^{\bfic}_t$ is defined in \eqref{eq: pbw vector U}. Note that $x_\ell = y_1 = f_{i}$ and $ y_{t+1} = \bS_i (x_t)$ for $t=1,2, \ldots, \ell-1$. 
It is known that 
\bna
\item ${_i}\mathcal{A}$ is generated by $x_1$, $x_2$, \ldots, $x_{\ell-1}$,  
\item $\mathcal{A}_i$ is generated by $y_2$, $y_3$, \ldots, $y_\ell$.   
\ee
Thus, it suffices to show that  
$$
\bT_i(x_t) = \cC_i^{ \langle h_i, \beta_t \rangle} y_{t+1} \quad \text{ for } t=1,2, \ldots, \ell-1.
$$
We shall use induction on $\Ht(\be_t)$. 

If $\Ht(\be_t)=1$, then it is obvious by \eqref{eq: Si} and \eqref{Eq: Ti fjp}. 

We assume that $\Ht(\be_t)>1$, and let $( \be_a, \be_b )$  be a minimal pair of $\be_t$. By \cite[Theorem 4.2 (4.4)]{BKM12}, we have 
\begin{align} \label{Eq: xaxb}
x_ax_b - q^{-(\beta_a, \beta_b )} x_b x_a =  [p_{\beta_b,\beta_a}+1] x_t,
\end{align}
where $p_{\beta_b,\beta_a}$ is the integer given in~\eqref{eq: p al,be}. 
Note that $\bT_i(x_a) = \cC_i^{  \langle h_i, \beta_a \rangle } y_{a+1} $ by the induction hypothesis.

If $ b \ne \ell$, i.e. $x_b \ne f_i$, then the induction hypothesis tells us that $\bT_i(x_b) = \cC_i^{  \langle h_i, \beta_b \rangle } \bS_i(x_b) $. Applying $\bT_i$ to the equality \eqref{Eq: xaxb}, we have 
\begin{align*}
[p_{\beta_b,\beta_a}+1]  \bT_i(x_t) &= \bT_i(x_a) \bT_i(x_b) - q^{-(\beta_a, \beta_b )} \bT_i(x_b) \bT_i(x_a) \allowdisplaybreaks\\
&=  \cC_i^{  \langle h_i, \beta_a + \beta_b \rangle } \left( \bS_i(x_a) \bS_i(x_b) - q^{-(\beta_a, \beta_b )} \bS_i(x_b) \bS_i(x_a) \right) \allowdisplaybreaks\\
&=  \cC_i^{  \langle h_i, \beta_t \rangle } \left(  y_{a+1} y_{b+1} - q^{-(\gamma_{a+1}, \gamma_{b+1} )} y_{b+1} y_{a+1} \right)\allowdisplaybreaks \\
&= \cC_i^{  \langle h_i, \beta_t \rangle }  [p_{\gamma_{b+1},\gamma_{a+1}}+1] y_{t+1} \\ &= \cC_i^{  \langle h_i, \beta_t \rangle }  [p_{\gamma_{b+1},\gamma_{a+1}}+1] \bS_i(x_{t}),
\end{align*}
where the third equality follows from the fact that $ (\beta_a, \beta_b) = (\gamma_{a+1}, \gamma_{b+1}) $, and the fourth equality follows from the fact that $( y_{a+1}, y_{b+1})$ is a minimal pair of $y_{t+1}$. Since $p_{\beta_b,\beta_a} = p_{\gamma_{b+1},\gamma_{a+1}} $, we have $\bT_i(x_t) = \cC_i^{  \langle h_i, \beta_t \rangle } \bS_i(x_t)$.   

Suppose that $b = \ell$, i.e., $x_b = f_i$, and let $y \seteq \bS_i(f_i) = -e_i t_i$. Since $ e_i^\star (y_{a+1})=0 $, Lemma \ref{Lem: Py} and \eqref{Eq: xaxb} say that 
\begin{equation} \label{Eq: [p+1]y_k+1}
\begin{aligned} 
 [p_{\beta_b,\beta_a}+1] y_{t+1} &= \bS_i( x_ax_b - q^{-(\beta_a, \beta_b )} x_b x_a )   =  y_{a+1} y - q_i^{ \langle h_i, \gamma_{a+1} \rangle} y y_{a+1} \allowdisplaybreaks\\
&= \frac{ q_i^{ \langle h_i, \gamma_{a+1} \rangle} }{ q_i^2 (q_i^{-1} - q_i) } e_i'( y_{a+1} )
\end{aligned}
\end{equation}
where the second equality follows from that $ q^{-(\beta_a, \al_i )}  = q^{ ( \al_i,  s_i\beta_a )} = q_i^{ \langle h_i,  \gamma_{a+1} \rangle}$. Applying $\bT_i$ to \eqref{Eq: xaxb}, by Lemma \ref{Lem: x f_i,k+1}, we have 
\begin{align*}
[p_{\beta_b,\beta_a}+1] \bT_i(x_t) &= \bT_i(x_ax_b - q^{-(\beta_a, \beta_b )} x_b x_a) \allowdisplaybreaks\\
&= \cC_i^{\langle h_i, \beta_a  \rangle } \left( y_{a+1} f_{i,1} - q_i^{  \langle h_i, \gamma_{a+1} \rangle} f_{i,1} y_{a+1} \right) \allowdisplaybreaks \\
&= \cC_i^{\langle h_i, \beta_a  \rangle } q_i^{  \langle h_i ,\gamma_{a+1} \rangle - 1}  ( q_i^{-1} - q_i)  e'_i (y_{a+1})  \allowdisplaybreaks\\
&\underset{\eqref{Eq: [p+1]y_k+1}}{=} [p_{\beta_b,\beta_a}+1] \cC_i^{ \langle h_i, \beta_a  \rangle } q_i (q_i^{-1}-q_i)^2 y_{t+1} \allowdisplaybreaks \\
&= [p_{\beta_b,\beta_a}+1] \cC_i^{ \langle h_i, \beta_{t}  \rangle } y_{t+1},
\end{align*}
where the last equality follows from that $\beta_{t} = \beta_a + \al_i$. Thus we have the assertion.
\end{proof}

\begin{lemma}[cf.~{\cite[Corollary 8.4 (b)]{JLO2}}] \label{Lemma: Tw0}
Let $w_\circ$ be the longest element of $\weyl$, and let $\inv$ be the Dynkin diagram automorphism defined by $i \mapsto i^*$, where $\al_{i^*} = -w_\circ(\al_i)$. Then we have $ \bT_{w_\circ} = \dD \circ \inv $, i.e.,
$$
\bT_{w_\circ}(f_{i,p}) = f_{i^*, p+1} \qquad \text{ for any $i\in I$ and $p \in \Z$.}
$$
\end{lemma}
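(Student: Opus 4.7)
The plan is to reduce the identity to its values on generators and then transport the computation into Lusztig's PBW theory on $\calU_q^-(\g)$ via Proposition \ref{Prop: T and S}. First, I observe that both $\bT_{w_\circ}$ and $\dD \circ \inv$ are $\bfk$-algebra automorphisms of $\qbA{}$ commuting with $\dD$ (for $\bT_{w_\circ}$ this uses Lemma \ref{Lem: Ti and others}\,(1)), so it suffices to prove $\bT_{w_\circ}(f_{i, 0}) = f_{i^*, 1}$ for each $i \in I$.

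Next, I would fix $i \in I$ and choose a reduced expression $\bfic = (i_1, \ldots, i_\ell)$ of $w_\circ$ with $i_\ell = i$. Since $\bT_i(f_{i, 0}) = f_{i, 1}$ by definition, the identity becomes $\bT_{w_\circ s_i}(f_{i, 1}) = f_{i^*, 1}$, where $w_\circ s_i$ is represented by the prefix $(i_1, \ldots, i_{\ell-1})$. Note that $w_\circ s_i(\al_i) = -w_\circ(\al_i) = \al_{i^*}$ is a positive simple root, so the classical Lusztig PBW theory gives $\bS_{w_\circ s_i}(f_i) = f_{i^*}$ in $\calU_q^-(\g)$ (the last PBW root vector of the reduced expression $\bfic$ of $w_\circ$ has weight $-\al_{i^*}$ and equals $f_{i^*}$ under the standard normalization). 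Under the isomorphism $\qbA{[1]} \simeq \calU_q^-(\g)$ of Lemma \ref{Lem: id A and U}, the element $f_{i^*}$ corresponds to $f_{i^*, 1}$, which is the expected answer.

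The hard part will be the rigorous iterated comparison between $\bT_{w_\circ s_i}$ acting on $f_{i, 1}$ and $\bS_{w_\circ s_i}$ acting on $f_i$: Proposition \ref{Prop: T and S} identifies $\bT_j$ with a scalar multiple of $\bS_j$ only on $e'_j$-annihilated elements within a single slice $\qbA{[k]}$, whereas the iterated product $\bT_{i_1} \cdots \bT_{i_{\ell-1}}$ may push intermediate terms out of $\qbA{[1]}$ (whenever some $i_r = i$ shifts an $f_{i, 1}$-factor to $f_{i, 2}$). I would handle this by selecting a reduced expression $(j_1, \ldots, j_{\ell-1})$ of $w_\circ s_i$ along which each truncated root $\be_k \seteq s_{j_k} \cdots s_{j_{\ell-1}}(\al_i)$ stays in $\Phi_+$, which exists since $w_\circ s_i(\al_i) \in \Phi_+$ and can be constructed by induction on $\ell(w_\circ s_i)$ via left descents avoiding the exceptional case $w(\al_i) = \al_j$. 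Along such an expression, Proposition \ref{Prop: T and S} applies step by step and produces $\bT_{w_\circ s_i}(f_{i, 1}) = C \cdot f_{i^*, 1}$ with $C = \prod_k \cC_{j_k}^{\langle h_{j_k}, \be_{k+1} \rangle}$; a telescoping using $\be_k = s_{j_k}(\be_{k+1})$ then forces $C = 1$. Alternatively, the statement is essentially \cite[Corollary 8.4\,(b)]{JLO2}, proved there in the quantum virtual Grothendieck ring, and can be transferred to $\qbA{}$ through the algebra isomorphism $\Theta_Q$ of \cite[Theorem 7.2]{JLO2} used in the proof of Lemma \ref{Lem: qA decomp}.
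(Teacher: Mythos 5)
Your overall strategy matches the paper's: reduce to $\bT_{w_\circ}(f_{i,0}) = f_{i^*,1}$, peel off one factor so that the computation lands in Proposition~\ref{Prop: T and S}'s regime, iterate that proposition along the remaining reduced word, and show that the accumulated product $C$ of powers of $\cC_j$ collapses to $1$. (The paper writes $w_\circ = s_{i^*}(w_\circ s_i)$ and works inside $\qbA{[0]}$; you write $w_\circ = (w_\circ s_i)s_i$ and move to $\qbA{[1]}$ first — that is a mirror image of the same factorization and causes no difficulty.)

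Where you genuinely diverge is in how you kill the scalar $C$. The paper writes each exponent $\langle h_{i_k}, s_{i_{k+1}}\cdots s_{i_{\ell-1}}(\al_i)\rangle$ as a pairing with the coroot $s_{i_{\ell-1}}\cdots s_{i_{k+1}}(\al_{i_k}^\vee)$, sums over the inversion set, and evaluates $(2\rho_s^\vee-\al_i^\vee,\al_i)=0$, $(2\rho_l^\vee,\al_i)=0$. You instead propose a telescoping through the relation $\be_k = s_{j_k}(\be_{k+1})$. This does work, but you should be explicit that in the non-simply-laced case the single height sum is not enough: since $\cC_{j_k}$ takes the two distinct values $\cC_s$, $\cC_l$, one must track the \emph{short height} and \emph{long height} of $\be_k$ separately. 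Writing $\be_k = \sum_j n_{j,k}\al_j$, the reflection $s_{j_k}$ changes only $n_{j_k,k}$, so $\sum_{k:\,\al_{j_k}\text{ short}}\langle h_{j_k},\be_{k+1}\rangle = \het_s(\al_i)-\het_s(\al_{i^*})$ and likewise for the long part; these vanish because $i\mapsto i^*$ is a diagram automorphism, so $\al_i$ and $\al_{i^*}$ have the same length. Stated this way, your telescoping is actually a cleaner argument than the paper's $\rho^\vee$ computation, and it makes the role of the diagram automorphism transparent.

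One part of your plan is a red herring and would be worth cutting. You worry that intermediate terms might leave $\qbA{[1]}$ and propose to construct a special reduced word so that each truncated root $\be_k$ stays positive. Positivity of $\be_k$ holds for \emph{every} reduced expression $(j_1,\ldots,j_{\ell-1})$ of $w_\circ s_i$: since $(j_1,\ldots,j_{\ell-1},i)\in R(w_\circ)$, every suffix $(j_k,\ldots,j_{\ell-1},i)$ is reduced, and $\bS_{j_k}\cdots\bS_{j_{\ell-1}}(f_i)$ is then a PBW root vector lying in $\calU_q^-(\g)$ of weight $-\be_k$, so $\be_k\in\Phi_+$ automatically. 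The actual hypothesis needed to iterate Proposition~\ref{Prop: T and S} is $e'_{j_{k-1}}(g_k)=0$ for $g_k = \bS_{j_k}\cdots\bS_{j_{\ell-1}}(f_i)$, and this too holds for free: $g_{k-1}=\bS_{j_{k-1}}(g_k)$ is a PBW root vector of index $\ge 2$ for the reduced word $(j_{k-1},\ldots,j_{\ell-1},i)$, hence lies in $\calU_{j_{k-1}}$, and $\bS_{j_{k-1}}^{-1}\colon \calU_{j_{k-1}}\isoto{}_{j_{k-1}}\calU$ gives $g_k\in{}_{j_{k-1}}\calU$. So no special reduced word need be constructed, and the paper likewise uses an arbitrary reduced expression of $w_\circ s_i$.
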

\begin{proof}
It suffices to show that $\bT_{w_\circ}(f_{i,0}) = f_{i^*, 1}$ for any $i\in I$.
Let $w = w_\circ s_i = s_{i^*} w_\circ$ and set $\bfi  = (i_1, \ldots, i_{\ell-1}) \in R(w)$. 
When $\cmC$ is of symmetric type, we understand all roots are short.
We set 
\bna
\item $ \beta^\vee \seteq 2 \beta / (\beta, \beta) $ for a positive root $\beta \in \Phi_+$, 
\item $\Phi_{+,\mathrm{s}}^\vee \seteq \{ \beta^\vee \mid  \beta\text{ : short positive roots} \} $ and $\Phi_{+,\mathrm{l}}^\vee \seteq \{ \beta^\vee \mid  \beta\text{ : long positive roots} \} $, 
\item $2\rho_s^\vee \seteq \sum_{\beta^\vee \in \Phi_{+,s}^\vee} \beta^\vee$, and $2\rho_l^\vee \seteq \sum_{\beta^\vee \in \Phi_{+,l}^\vee} \beta^\vee$.
\ee
Note that $ s_i(\beta^\vee) = (s_i\beta)^\vee $ for any $\beta\in \Phi_+$.

We assume that $\al_i$ is short. The we have $ (2\rho_s^\vee, \al_i)=2$ and $ (2\rho_l^\vee, \al_i)=0$.
Define 
$\upepsilon(i)\seteq 1$ if $\al_i$ is short, $\upepsilon(i)\seteq 0$ otherwise. 
Then we have 
\begin{align*}
\sum_{k=1}^{\ell-1}  \upepsilon(i_k) \langle h_{i_k}, s_{i_{k+1}} \cdots s_{i_{\ell-1}} (\al_i)   \rangle  
& = \sum_{k=1}^{\ell-1} \upepsilon(i_k) ( \al_{i_k}^\vee, s_{i_{k+1}} \cdots s_{i_{\ell-1}} (\al_i)   ) \allowdisplaybreaks \\
&= \sum_{k=1}^{\ell-1} \upepsilon(i_k) ( s_{i_{\ell-1}}  \cdots s_{i_{k+1}}  (\al_{i_k}^\vee),  \al_i    )  \allowdisplaybreaks\\
&= \sum_{ \beta^\vee \in \Phi_{+,s}^\vee \setminus \al_i^\vee} ( \beta^\vee,  \al_i    )  \allowdisplaybreaks\\
&=  ( 2\rho_s^\vee - \al_i^\vee,  \al_i    ) = 0,
\end{align*}
and 
\begin{align*}
\sum_{k=1}^{\ell-1}  (1-\upepsilon(i_k)) \langle h_{i_k}, s_{i_{k+1}} \cdots s_{i_{\ell-1}} (\al_i)   \rangle  
& = \sum_{k=1}^{\ell-1} (1-\upepsilon(i_k)) ( \al_{i_k}^\vee, s_{i_{k+1}} \cdots s_{i_{\ell-1}} (\al_i)   )  \allowdisplaybreaks\\
&= \sum_{k=1}^{\ell-1} (1-\upepsilon(i_k)) ( s_{i_{\ell-1}}  \cdots s_{i_{k+1}}  (\al_{i_k}^\vee),  \al_i    ) \allowdisplaybreaks \\
&= \sum_{ \beta^\vee \in \Phi_{+,l}^\vee } ( \beta^\vee,  \al_i    )  \allowdisplaybreaks\\
&=  ( 2\rho_l^\vee,  \al_i    ) = 0.
\end{align*}
Proposition \ref{Prop: T and S} says that 
$$ 
\bT_{w} (f_{i,0}) = \cC_s^{  \sum_{k=1}^{\ell-1} \upepsilon(i_k) \langle h_{i,k}, s_{i_{k+1}} \cdots s_{i_{\ell-1}} ( \al_i) \rangle }  \cC_l^{ \sum_{k=1}^{\ell-1} (1-\upepsilon(i_k)) \langle h_{i,k}, s_{i_{k+1}} \cdots s_{i_{\ell-1}} ( \al_i) \rangle } f_{i^*, 0} = f_{i^*, 0},
$$
which implies that
$$
\bT_{w_\circ} (f_{i,0}) =  \bT_{i^*} \bT_{w} (f_{i,0}) = f_{i^*,1}.
$$

In the case where $\al_i$ is long, it can be proved in the same manner as above. 
\end{proof}

\subsection{Bilinear form on $\qbA{}$}
The bilinear forms $(\ ,\ )_K$ and $(\ ,\ )_L$ on $\calU_q^-(\g)$ given in Section \ref{Subsection: bf and qba} are also defined on $\qbA{[k]}$ via the identification $\qbA{[k]} \simeq \calU_q^-(\g)$ given in Lemma \ref{Lem: id A and U}.

For any $\beta = \sum_{i\in I} n_i \al_i \in \rlQ$, we set 
$$
\cC^\beta \seteq \prod_{i\in I} \cC_i^{n_i}.
$$

\begin{definition} \label{def: ( )}
Let $a,b \in \Z$ with $a \le b$.
For homogeneous elements $x=x_{b} x_{b-1} \cdots x_{a}$ and $ y = y_{b} y_{b-1} \cdots y_{a} $ in $ \qbA{[a,b]}$, 
we define a bilinear form $\llf \ , \ \rrf$ on $\qbA{[a,b]}$ by
\begin{align*} 
\pair{x,y} \seteq \prod_{k\in [a,b]}  \cC^{  \abs{\wt(x_k)} + \abs{\wt(y_k)} } (x_k, y_k)_L  \in \bR,  
\end{align*}
where $\abs{\beta}$ is defined in \eqref{eq: height and obe}.
\end{definition}
Note that $\abs{\wt(x_k)} = (-1)^{k+1} \wt(x_k)$ for $k\in [a,b]$. It is easy to see that the bilinear form $\pair{ \ , \ }$ can be extended to the whole algebra $\qbA{}$.

\begin{lemma} \label{Lem: properties of (,)} \
\bnum 
\item \label{it: pairing1} The bilinear form $\pair{ \ , \ }$ is well-defined and it is symmetric and non-degenerate.
\item \label{it: pairing2} Let $x=x_{b} x_{b-1} \cdots x_{a}$ and $ y = y_{b} y_{b-1} \cdots y_{a} $ be homogeneous elements in $ \qbA{[a,b]}$.
\bna
\item We have  
$$ 
\pair{x,y}  = \prod_{k\in [a,b]}\pair{x_k,y_k}. 
$$
In particular, we have $ \pair{x,y} = 0$ unless $\wt(x_k) = \wt(y_k)$ for all $k\in [a,b]$. 

\item  If we write $ \abs{\wt(x_t)} = \sum_{i\in I}  n_{i,t} \al_i \in \rlQ_+$ for $t\in [a,b]$, then 
$$
\pair{x,y} = \prod_{t\in [a,b]}  q^{N_t} \cC^{\abs{\wt(x_t)}} (x_t, y_t)_K, 
$$
where $N_t = \displaystyle \frac{1}{4} \sum_{i\in I} (\al_i, \al_i) n_{i,t}$.
\ee
\item \label{it: pairing3} For any $x,y \in \qbA{[a,b]}$, we have 
$$
\pair{\dD x, \dD y} = \pair{x,y} 
 \quad \text{and} \quad \pair{*x,*y} = \pair{x,y}. 
$$
\ee
\end{lemma}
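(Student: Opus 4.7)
The overall strategy is to reduce every assertion to the decomposition $\qbA{[a,b]} \simeq \qbA{[b]} \otimes_{\bR} \cdots \otimes_{\bR} \qbA{[a]}$ from Lemma~\ref{Lem: qA decomp}\eqref{it: Decom1} and to the standard properties of the Lusztig form $(\cdot,\cdot)_L$ on each tensor factor $\qbA{[k]} \simeq \calU_q^-(\g)$ (Lemma~\ref{Lem: id A and U}). By design, $\pair{\ ,\ }$ is the bilinear extension of a product of rescalings of $(\cdot,\cdot)_L$ on the factors, so the entire proof is essentially a matter of transporting properties of $(\cdot,\cdot)_L$ slot by slot.

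To prove~\eqref{it: pairing1}, I would first observe that well-definedness is automatic: a homogeneous element $x$ determines its factorization $x=x_b \cdots x_a$ uniquely as an elementary tensor via Lemma~\ref{Lem: qA decomp}\eqref{it: Decom1}, so the formula depends only on those tensor factors, and it extends bilinearly to all of $\qbA{[a,b]}$ after decomposing each slot by weight. Symmetry is immediate from the symmetry of $(\cdot,\cdot)_L$ and of the exponent $\abs{\wt(x_k)}+\abs{\wt(y_k)}$. For non-degeneracy, I would fix for each slot $k$ and each weight $\beta$ a basis of $(\qbA{[k]})_\beta$ whose Gram matrix under $(\cdot,\cdot)_L$ is invertible (such bases exist because $(\cdot,\cdot)_L$ is non-degenerate on each weight space of $\calU_q^-(\g)$); the tensor product basis supplied by Lemma~\ref{Lem: qA decomp}\eqref{it: Decom2} then has Gram matrix (for $\pair{\ ,\ }$) equal to a tensor product of invertible matrices, hence itself invertible. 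Part~\eqref{it: pairing2}(a) is a direct rearrangement of the defining product, combined with the standard fact that $(x_k,y_k)_L$ vanishes between distinct weight spaces of $\calU_q^-(\g)$.

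For~\eqref{it: pairing2}(b), I would substitute the conversion formula~\eqref{eq: Kform Lform} into the definition, expand $\cC_i = q_i^{1/2}(q_i^{-1}-q_i)$, and simplify using $q_i=q^{\sfd_i}$ and $(\al_i,\al_i)=2\sfd_i$; this is a direct $q$-arithmetic computation yielding the advertised factor $q^{N_k}\cC^{\abs{\wt(x_k)}}$ in front of $(x_k,y_k)_K$. For~\eqref{it: pairing3}, both invariances reduce to understanding how $\dD$ and $*$ act slot by slot. The automorphism $\dD$ sends $\qbA{[k]}$ onto $\qbA{[k+1]}$ compatibly with the identifications of Lemma~\ref{Lem: id A and U} (on both sides $f_{i,k}$, $f_{i,k+1}$ correspond to $f_i$), so it intertwines $(\cdot,\cdot)_L$; since $p \mapsto p+1$ flips $\wt(f_{i,p})$ but preserves its absolute value, the factor $\cC^{\abs{\wt(x_k)}+\abs{\wt(y_k)}}$ is unchanged, and re-indexing the product gives $\pair{\dD x,\dD y}=\pair{x,y}$. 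The anti-automorphism $*$ reverses the factorization into $x^*=x_a^* \cdots x_b^*$ with $x_k^* \in \qbA{[-k]}$; under the identifications of $\qbA{[k]}$ and $\qbA{[-k]}$ with $\calU_q^-(\g)$ it induces the standard anti-involution $*$ on $\calU_q^-(\g)$, which preserves $(\cdot,\cdot)_L$. Because $(-1)^{-p+1}=(-1)^{p+1}$ the absolute weights are preserved, and the re-indexed product recovers $\pair{x,y}$.

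The main obstacle is the careful bookkeeping in~\eqref{it: pairing2}(b), where one must combine two scalar identities ($\cC_i$ rewritten in terms of $1-q_i^2$, and the $K$--$L$ conversion) to extract exactly the power $q^{N_k}$; and in the $*$-invariance step, where one must keep track of the order-reversal $[a,b]\leftrightarrow[-b,-a]$ of the slots and verify that the resulting product is re-indexed consistently. Neither is conceptually difficult once Lemma~\ref{Lem: qA decomp} and the classical properties of $(\cdot,\cdot)_L$ on $\calU_q^-(\g)$ are in hand.
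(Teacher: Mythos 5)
Your proposal follows essentially the same route as the paper's (very terse) proof: everything is pushed through the tensor decomposition of Lemma~\ref{Lem: qA decomp} to the slot-by-slot properties of $(\cdot,\cdot)_L$, with \eqref{eq: Kform Lform} handling the $K$--$L$ comparison in (2)(b) and the $*$-invariance of $(\cdot,\cdot)_L$ (Lusztig's Lemma 1.2.8) handling (3). These are exactly the ingredients the paper cites, so there is no conceptual divergence; you merely supply the bookkeeping the paper omits. Your remark about well-definedness is slightly informal (the factorization $x=x_b\cdots x_a$ is unique only up to scalars $c_k$ with $\prod c_k = 1$, but the formula is invariant under such rescalings by bilinearity of $(\cdot,\cdot)_L$), yet the conclusion is correct.

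One caution on (2)(b): you assert that the $q$-arithmetic ``yields the advertised factor $q^{N_k}\cC^{\abs{\wt(x_k)}}$'' without carrying it through. If you actually do the computation, using $\cC_i = q_i^{1/2}(q_i^{-1}-q_i) = q_i^{-1/2}(1-q_i^2)$ together with \eqref{eq: Kform Lform}, the factor that comes out is $q^{-N_t}\cC^{\abs{\wt(x_t)}}$, not $q^{N_t}\cC^{\abs{\wt(x_t)}}$. A quick check with $x_t=y_t=f_i$ in a single slot gives $\pair{f_i,f_i}=\cC_i^2(f_i,f_i)_L = q_i^{-1}(1-q_i^2)=q_i^{-1}-q_i$, which agrees with Theorem~\ref{Thm: orthonormal_reduced}(2) (case $u=1$), whereas the formula as printed would give $q^{N_t}\cC_i = 1-q_i^2$. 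This appears to be a sign typo in the paper's statement rather than a flaw in your strategy, but the point is that you should run the computation rather than assert the published formula; claiming to recover it as stated would be incorrect.
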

\begin{proof}
\eqref{it: pairing1} Since $(\ , \ )_L$ is a non-degenerate symmetric bilinear form on $\calU_q^-(\g)$, the assertion follows from by Lemma \ref{Lem: qA decomp}. 

\noindent
\eqref{it: pairing2} Since $(u,v)_L = 0$ for any homogeneous elements $ u,v \in \calU_q^-(\g) $ with $\wt(u) \ne \wt(v)$, the assertion (a) follows from Definition \ref{def: ( )}.
By \eqref{eq: Kform Lform}, we have the assertion (b).

\noindent
\eqref{it: pairing3} It follows from Definition \ref{def: ( )} and \cite[Lemma 1.2.8]{LusztigBook}.
\end{proof}

\vskip 2em

\section{PBW bases: locally reduced sequences} \label{Sec: PBW locally reduced} 

We shall construct and investigate a PBW bases of $\qbA{}$. We first define PBW vectors and PBW monomials in a formal way, and investigate their properties in this section and Section \ref{Section: general PBW}. 

\begin{definition} Let $[u,v]$ be an interval of $\Z$. A pair $ \pd = (\ii, \xi)$ of $\ii \in I^{[u,v]}$ and $\xi \in \Z$ is called a \emph{PBW datum}. 
\end{definition}

Let  $ \pd = (\ii, \xi)$ be a PBW datum of $\ii = (i_k)_{k\in [u,v]} \in I^{[u,v]}$ and $\xi \in \Z$. For any $ k \in [u,v]$, we define 
\begin{align} \label{Eq: PBW vectors} 
\rF^{\pd}_{k} \seteq \bT_{i_u} \bT_{i_{u+1}} \cdots \bT_{i_{k-1}} (f_{i_k,{\xi}})  \qtq 
\rFv^{ \pd}_{k} \seteq \bT_{i_u}^{-1} \bT_{i_{u+1}}^{-1} \cdots \bT_{i_{k-1}}^{-1} (f_{i_{k},{\xi}}). 
\end{align}
We call $ \rF^{ \pd }_k$ and $ \rFv^{ \pd }_k$ the $k$-th \emph{PBW root vectors} associated with the PBW-datum $\pd$. Lemma \ref{Lem: Ti and others}~\eqref{it: bar T} says that the PBW root vectors $\rF^{\pd}_{k}$ and $\rFv^{\pd}_{k}$ are bar-invariant.
We frequently write $\rF_{k}$ and $\rFv_{k}$ instead of $\rF_{k}^\pd$ and $\rFv^{ \pd}_{k}$ if no confusion arises.

For any $\bsd = (d_u, d_{u+1}, \ldots d_v) \in \Z_{\ge 0}^{\oplus [u,v]}$, we define 
\begin{equation} \label{Eq: PBW monomials}
\begin{aligned}
\rF_{\pd}(\bsd) &\seteq \Opd^{\longrightarrow}_{k\in [u,v]}  \rF^{d_k}_k  =    \rF^{d_v}_v    \rF^{d_{v-1}}_{v-1}   
\cdots  \rF^{d_u}_u, \\
\rFv_{\pd}(\bsd) &\seteq \Opd^{\longleftarrow}_{k\in [u,v]}  \rFv^{d_k}_k  =    \rFv^{d_u}_u    \rFv^{d_{u+1}}_{u+1}   \cdots  \rFv^{d_v}_v.
\end{aligned}
\end{equation}
The monomials $\rF_{\pd}(\bsd)$, $\rFv_{\pd}(\bsd)$ are called  \emph{PBW monomials} associated with  $\pd$ and $\bsd$. 
\begin{remark} 
We do \emph{not} use divided powers in the definition \eqref{Eq: PBW monomials} of PBW monomials.  
\end{remark}

\subsection{Case for $\qbA{[a,b]}$} 
Let $a \in \Z$ and $b\in \Z \cup \{\infty \}$ with $a \le b$. 
We set $\ell\seteq \ell(w_\circ)$ and  
$$  
N \seteq \ell \times (b-a+1).
$$
We choose a locally reduced sequence $\bfi^\Diamond = (i_k)_{k\in \Z}   \in I^\Z$; 
i.e.,
\begin{align} \label{Eq: longest case}
\text{$ (i_k, i_{k+1}, \ldots, i_{k+\ell-1})$ is a reduced expression of $w_\circ$ for any $k \in \Z$.} 
\end{align}
It is easy to see that $i_{k+\ell} = i_k^*$ for any $k \in \Z$. We set 
\begin{align} \label{Eq: i circ}
\bfi \seteq (i_1, i_2, \ldots, i_N) \in I^{[1,N]} \quad \text{ and }\quad 
\bfic \seteq (i_1, i_2, \ldots, i_\ell) \in R(w_\circ).
\end{align}
 We define the PBW datum 
$$
\pd\seteq ( \bfi, a ).
$$
We now write the PBW root vectors $\rF_{k}$ instead of $\rF_{k}^\pd$ for $k\in [1,N]$ if no confusion arises.  
 
\begin{lemma} \label{Lem: Fk+ell=Dfk} \
\bnum
\item \label{it: Dlk1} For any $k\in [1, N-\ell]$, we have 
$$
\rF_{k+\ell} = \dD( \rF_k ).
$$
\item \label{it: Dlk2} We consider the PBW vectors $\calF_r$ associated with the reduced expression $\bfic$ given in \eqref{eq: pbw vector U}
for $1 \le r \le \ell$. Let $k\in [1,N]$ and write $k = q \ell + r$ for some $q,r\in \Z_{\ge0}$ with $r \in [1, \ell]$. 
Under the identification $\qbA{[a]} \simeq \calU_q^-(\g)$ given in {\rm Lemma \ref{Lem: id A and U}}, we have
$$
\rF_k = \dD^q ( \calF_r ) \qquad \text{$($up to a product of $\cC_i$'s$)$.}
$$
\ee
\end{lemma}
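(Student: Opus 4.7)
The strategy is to handle Part~\eqref{it: Dlk1} by a direct computation that exploits Lemma~\ref{Lemma: Tw0} together with the periodicity inherent in locally reduced sequences, and then reduce Part~\eqref{it: Dlk2} to the range $k\in[1,\ell]$ via Part~\eqref{it: Dlk1} before comparing the $\bT$-braid to the $\bS$-braid step by step using Proposition~\ref{Prop: T and S}.

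For Part~\eqref{it: Dlk1}, the locally reduced hypothesis~\eqref{Eq: longest case} ensures that $(i_k, i_{k+1}, \ldots, i_{k+\ell-1})$ is a reduced expression of $w_\circ$, so Proposition~\ref{Prop: Ti braid} yields $\bT_{i_k}\bT_{i_{k+1}}\cdots\bT_{i_{k+\ell-1}} = \bT_{w_\circ}$. Comparing this reduced expression with the neighbouring one $(i_{k+1},\ldots,i_{k+\ell})$ of $w_\circ$ forces the identity $s_{i_k}w_\circ = w_\circ s_{i_{k+\ell}}$, from which $i_{k+\ell} = i_k^*$. Lemma~\ref{Lemma: Tw0} then gives
\[
\bT_{w_\circ}(f_{i_{k+\ell},a}) = f_{(i_{k+\ell})^{*},\,a+1} = f_{i_k,\,a+1},
\]
so $\rF_{k+\ell} = \bT_{i_1}\cdots\bT_{i_{k-1}}(f_{i_k,\,a+1})$. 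Pushing $\dD$ through the $\bT_{i_m}$'s by Lemma~\ref{Lem: Ti and others}~\eqref{it: d T} and using $\dD(f_{i_k,a}) = f_{i_k,a+1}$ identifies this with $\dD(\rF_k)$.

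For Part~\eqref{it: Dlk2}, writing $k = q\ell + r$ with $r\in[1,\ell]$ and iterating Part~\eqref{it: Dlk1} gives $\rF_k = \dD^q(\rF_r)$. Since $\dD$ is a $\bfk$-algebra automorphism, it suffices to treat the base case $k = r$. Because the Lusztig-type formulas in~\eqref{Eq: Ti fjp} for $\bT_i(f_{j,p})$ act at a fixed second index, successive applications of $\bT_{i_m}$ to $f_{i_r,a}$ stay inside $\qbA{[a]}$. I plan a downward induction on $m = r-1, r-2, \ldots, 1$ establishing
\[
\bT_{i_m}\bT_{i_{m+1}}\cdots\bT_{i_{r-1}}(f_{i_r,a}) \;=\; c_m\,\bS_{i_m}\bS_{i_{m+1}}\cdots\bS_{i_{r-1}}(f_{i_r})
\]
under the identification $\qbA{[a]} \simeq \Um$, where $c_m$ is a product of constants $\cC_i$; each inductive step invokes Proposition~\ref{Prop: T and S} and contributes a factor of the form $\cC_{i_m}^{\langle h_{i_m},\abs{\wt(\,\cdot\,)}\rangle}$. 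Taking $m=1$ yields $\rF_r = c_1\calF_r$, whence $\rF_k = c_1\dD^q(\calF_r)$ as required.

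The main subtlety is verifying, at each inductive step, the kernel condition $e'_{i_m}\bigl(\bS_{i_{m+1}}\cdots\bS_{i_{r-1}}(f_{i_r})\bigr) = 0$ that is required to invoke Proposition~\ref{Prop: T and S}. This follows from standard Lusztig PBW theory: since $(i_m, i_{m+1}, \ldots, i_r)$ is a reduced subword of $\bfic$, the iterated braid symmetry $\bS_{i_m}\bS_{i_{m+1}}\cdots\bS_{i_{r-1}}(f_{i_r})$ is a well-defined PBW root vector lying in $\Um$, and by the Kashiwara-type decomposition $\Um = {}_{i_m}\calU \oplus f_{i_m}\Um$ the intermediate product $\bS_{i_{m+1}}\cdots\bS_{i_{r-1}}(f_{i_r})$ must then lie in $_{i_m}\calU$.
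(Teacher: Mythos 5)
Your proof is correct and follows essentially the same route as the paper: part~\eqref{it: Dlk1} is the same braid-group/Lemma~\ref{Lemma: Tw0} computation together with Lemma~\ref{Lem: Ti and others}~\eqref{it: d T}, and part~\eqref{it: Dlk2} fleshes out the one-line reduction "(1) plus Proposition~\ref{Prop: T and S}" into an explicit downward induction. One small imprecision worth flagging: the claim that "successive applications of $\bT_{i_m}$ to $f_{i_r,a}$ stay inside $\qbA{[a]}$" is not automatic from the shape of~\eqref{Eq: Ti fjp} alone (recall $\bT_i(f_{i,p})=f_{i,p+1}$ leaves $\qbA{[a]}$); rather, containment in $\qbA{[a]}$ at each step is a consequence of the kernel condition you verify at the end, since then Proposition~\ref{Prop: T and S} identifies the result with a scalar times $\bS_{i_m}(\cdot)\in\calU_{i_m}\subset\qbA{[a]}$. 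With that ordering of the argument made explicit, the proof is complete.
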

\begin{proof}
\eqref{it: Dlk1} 
Since $ i_{k+\ell} = i_k^* $,  
it follows from Lemma \ref{Lem: Ti and others}, Lemma \ref{Lemma: Tw0} and \eqref{Eq: longest case} that 
\begin{align*}
\rF_{k+\ell} &= \bT_{i_1} \cdots \bT_{i_{k-1}} \left( \bT_{i_k} \cdots \bT_{i_{k+\ell-1}} \right) (f_{i_{k+\ell}, a})  \allowdisplaybreaks\\
&= \bT_{i_1} \cdots \bT_{i_{k-1}} \bT_{w_\circ}  (f_{i_{k}^*, a})\allowdisplaybreaks \\
&= \bT_{i_1} \cdots \bT_{i_{k-1}} \dD  (f_{i_{k}, a}) \allowdisplaybreaks\\
&=  \dD  ( \rF_k). 
\end{align*}

\noindent
\eqref{it: Dlk2} It follows from~\eqref{it: Dlk1} and Proposition \ref{Prop: T and S}.
\end{proof}

Thanks to Lemma \ref{Lem: Fk+ell=Dfk}, every PBW root vector $\rF_{k}$ ($k\in [1,N]$) is contained in $\qbA{[a,b]}$.
Recall the bilinear form $\pair{ \ , \ }$ defined in Definition \ref{def: ( )}.

\begin{theorem} \label{Thm: orthonormal_reduced}
Let $\sfP_{\pd} \seteq \{  \rF_{\pd}(\bsu) \mid \bsu \in \Z_{\ge 0}^{\oplus N} \} $.
\bnum
\item \label{it: reduced ortho1}
The set $\sfP_{\pd}$ is a $\bR$-linear orthogonal basis of $\qbA{[a,b]}$ with respect to $\pair{ \ , \ }$.
\item  \label{it: reduced ortho2}
For any $k\in [1,N]$ and $u \in \Z_{>0}$, we have  
$$
\pair{  \rF^{u}_k ,  \rF^{u}_k } = q_{i_k}^{-u(u-1)/2} ( q_{i_k}^{-1} - q_{i_k})^u [u]_{i_k}!.
$$
\item  \label{it: reduced ortho3} For any $\bsu = (u_1, \ldots, u_N), \bsu' = (u_1', \ldots, u_N' ) \in \Z_{\ge0}^{\oplus N}$, we have 
$$
\pair{\rF_{\pd}(\bsu), \rF_{\pd}(\bsu')}
 = \prod_{k=1}^N \delta_{u_k, u_k'} q_{i_k}^{-u_{k}(u_{k}-1)/2} (q_{i_k}^{-1}-q_{i_k})^{u_k}  [u_{k}]_{i_k}!.
$$
\ee
\end{theorem}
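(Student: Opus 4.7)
The plan is to reduce to the classical PBW basis theorem for $\calU_q^-(\g)$ with respect to Lusztig's bilinear form, using the tensor-product decomposition of $\qbA{[a,b]}$ from Lemma~\ref{Lem: qA decomp} together with the block structure of the PBW root vectors afforded by the locally reduced sequence $\bfi$.

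First, Lemma~\ref{Lem: Fk+ell=Dfk}\eqref{it: Dlk2} implies that if $k=q\ell+r$ with $r\in[1,\ell]$, then $\rF_k\in\qbA{[a+q]}$ and, under the identification $\qbA{[a+q]}\simeq \calU_q^-(\g)$ of Lemma~\ref{Lem: id A and U}, equals a nonzero scalar multiple of the classical PBW root vector $\calF_r$. Regrouping the product $\rF_\pd(\bsu)=\rF_N^{u_N}\cdots\rF_1^{u_1}$ by blocks yields
\begin{align*}
\rF_\pd(\bsu)=M_b\cdot M_{b-1}\cdots M_a,\qquad M_p\in\qbA{[p]},
\end{align*}
where each $M_p$ corresponds, via the identification, to a scalar multiple of $\calF_{\bfic}(\bsu^{(p)})$ with $\bsu^{(p)}=(u_{(p-a)\ell+1},\ldots,u_{(p-a+1)\ell})$. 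Since $\{\calF_{\bfic}(\bsc)\}$ is a basis of $\calU_q^-(\g)$ by~\eqref{eq: PBW basis}, Lemma~\ref{Lem: qA decomp}\eqref{it: Decom2} then gives that $\sfP_\pd$ is a $\bR$-basis of $\qbA{[a,b]}$, establishing the spanning part of~\eqref{it: reduced ortho1}.

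Next, Lemma~\ref{Lem: properties of (,)}\eqref{it: pairing2}(a) factorises the bilinear form block by block,
\begin{align*}
\pair{\rF_\pd(\bsu),\rF_\pd(\bsu')}=\prod_{p=a}^{b}\pair{M_p(\bsu^{(p)}),M_p((\bsu')^{(p)})},
\end{align*}
and each block-factor reduces, via Definition~\ref{def: ( )}, to a $\cC$-rescaled classical Lusztig pairing. The orthogonality of the classical PBW basis of $\calU_q^-(\g)$ with respect to $(\cdot,\cdot)_L$ forces this to vanish unless $\bsu=\bsu'$, which simultaneously yields linear independence and the diagonal shape of~\eqref{it: reduced ortho3}; only the norm~\eqref{it: reduced ortho2} remains. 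For this, I would invoke the braid invariance of $\pair{\cdot,\cdot}$ (Proposition~\ref{Prop: invariant form}) to reduce $\pair{\rF_k^u,\rF_k^u}=\pair{f_{i_k,a}^u,f_{i_k,a}^u}$, and then compute the right-hand side as $\cC_{i_k}^{2u}(f_{i_k}^u,f_{i_k}^u)_L$ from Definition~\ref{def: ( )}; iterating $(f_ix,y)_K=(x,e_i'(y))_K$ against~\eqref{Eq: es e'i for f_i^m} gives $(f_i^u,f_i^u)_K=q_i^{-u(u-1)/2}[u]_i!$, and converting to $(\cdot,\cdot)_L$ via~\eqref{eq: Kform Lform} together with $\cC_{i_k}^{2u}=q_{i_k}^u(q_{i_k}^{-1}-q_{i_k})^{2u}$ produces the claimed $q_{i_k}^{-u(u-1)/2}(q_{i_k}^{-1}-q_{i_k})^u[u]_{i_k}!$.

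The main obstacle I anticipate is ensuring that braid invariance from Proposition~\ref{Prop: invariant form} actually applies to arbitrary powers of PBW root vectors: if its hypotheses carry a condition analogous to the $e_i'$-annihilation of the classical ${_i}\calU$, one must check inductively along $\bT_{i_1}\cdots\bT_{i_{k-1}}$ that the relevant vanishing holds at each intermediate step. Should that route be unavailable, the fallback is to work entirely inside the block $\qbA{[a+q]}$: track the scalar $c_k$ in $\rF_k=c_k\dD^q(\calF_r)$ using Proposition~\ref{Prop: T and S}, compute $(\calF_r^u,\calF_r^u)_L$ via braid invariance of Lusztig's form on ${_i}\calU\subset \calU_q^-(\g)$, and reconcile the resulting $\cC$-powers with $\cC_{i_k}^{2u}$ through the weight bookkeeping used in the proof of Lemma~\ref{Lemma: Tw0}.
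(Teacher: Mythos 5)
Your overall structure for parts~\eqref{it: reduced ortho1} and~\eqref{it: reduced ortho3} matches the paper: block by block reduction via Lemma~\ref{Lem: Fk+ell=Dfk}\eqref{it: Dlk2} and Lemma~\ref{Lem: qA decomp}, identification of each block-monomial with a scalar multiple of the classical PBW monomial, and then factorisation of the form through Lemma~\ref{Lem: properties of (,)} plus Lusztig's orthogonality. The arithmetic you carry out for the norm in~\eqref{it: reduced ortho2}, converting from $(\cdot,\cdot)_K$ to $(\cdot,\cdot)_L$ and tracking the $\cC_{i_k}^{2u}$ factor, is correct and lands on the claimed formula.

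The real problem lies in your primary route for~\eqref{it: reduced ortho2}: you propose to invoke Proposition~\ref{Prop: invariant form} to reduce $\pair{\rF_k^u,\rF_k^u}$ to $\pair{f_{i_k,a}^u,f_{i_k,a}^u}$. That proposition is stated \emph{after} Theorem~\ref{Thm: orthonormal_reduced}, and its proof in the paper explicitly uses Theorem~\ref{Thm: orthonormal_reduced} to reduce to the case where $x$ and $y$ are PBW monomials. So your primary route is circular. You sense an obstacle, but you misattribute it: Proposition~\ref{Prop: invariant form} has no $e_i'$-annihilation hypothesis, so the issue is not a hidden condition on the domain; it is purely one of logical ordering. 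Your fallback paragraph is, however, exactly the right fix and is in substance what the paper does: reduce (WLOG) to $k\le\ell$ by Lemma~\ref{Lem: Fk+ell=Dfk} and $\dD$-invariance of the form, then prove a \emph{restricted} $\bT_i$-invariance $\pair{\bT_i(x),\bT_i(y)}=\pair{x,y}$ for homogeneous $x,y\in\qbA{[a]}$ with $\bT_i(x),\bT_i(y)\in\qbA{[a]}$, directly from Proposition~\ref{Prop: T and S}, Definition~\ref{def: ( )}, and $\bS_i$-invariance of $(\cdot,\cdot)_L$. The paper packages the scalar reconciliation you describe (``reconcile the resulting $\cC$-powers'') into this one restricted-invariance computation rather than tracking $c_k$ explicitly, but the content is the same. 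Promote the fallback to your main argument and drop the appeal to Proposition~\ref{Prop: invariant form}; then the proof is sound and matches the paper's.
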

\begin{proof}
\eqref{it: reduced ortho1}
For any $t\in [0,b-a]$ and $r \in [1,\ell]$, we define $ \rF_{t, r} \seteq \rF^{\pd}_{t \ell + r} $, and set 
$$ 
\rF_{t}(\bsc) \seteq \Opd^{\longrightarrow}_{r \in [1,\ell]}  \rF_{t, r}^{c_r},
$$ for any $\bsc = (c_1, \ldots, c_\ell) \in \Z_{\ge0}^{\oplus \ell}$.
Thanks to Lemma \ref{Lem: Fk+ell=Dfk}, under the identification $\qbA{[a+t]} \simeq \calU_q^-(\g)$ given in Lemma \ref{Lem: id A and U}, $\rF_{t}(\bsc)$ is equal to the PBW monomial in $\calU_q^-(\g)$ associated with $\bsc$ and $\bfic$ up to a non-zero constant. Since the  PBW monomials form an orthogonal basis of $\calU_q^-(\g)$ with respect to $(\ , \ )_L$ (see \cite[Proposition 38.2.3]{LusztigBook} and see also \cite[Proposition 4.22]{Kimura12}), the set 
$$
\left\{ \rF_{t}(\bsc) \mid \bsc \in \Z_{\ge0}^{\oplus \ell}  \right\}
$$
is an orthogonal basis of $\qbA{[a+t]}$ with respect to $\pair{\ , \ }$. 
By Lemma \ref{Lem: qA decomp} and Definition \ref{def: ( )} of the bilinear form  $\pair{\ , \ }$, we have the assertion. 

\smallskip

\noindent
\eqref{it: reduced ortho2} 
By Definition \ref{def: ( )}, Lemma \ref{Lem: properties of (,)}~\eqref{it: pairing3} and Lemma \ref{Lem: Fk+ell=Dfk}, we may assume that $k\in [1, \ell]$.
Let $x,y$ be homogeneous elements in $\qbA{[a]}$ such that $\bT_i(x), \bT_i(y) \in \qbA{[a]}$. We set $\beta \seteq \wt(x)$ and $\gamma \seteq \wt(y)$. Note that 
\begin{align*}
\abs{\beta} &=  (-1)^{a+1}\beta \quad \text{ and } \quad  \abs{s_i(\beta)} = (-1)^{a+1}s_i(\beta), \\
\abs{\gamma} &=  (-1)^{a+1}\gamma \quad \text{ and } \quad  \abs{s_i(\gamma)} = (-1)^{a+1}s_i(\gamma).
\end{align*}
Since $(\ , \ )_L$ is invariant under the action $\bS_i$ (see \cite[Prospoition 38.2.1]{LusztigBook}), 
Proposition \ref{Prop: T and S} and Definition \ref{def: ( )} tell us that 
\begin{align*}
\pair{\bT_i(x), \bT_i(y)} &=  \cC^{ (-1)^{a+1} ( s_i(\beta) + s_i(\gamma))} (\bT_i(x), \bT_i(y))_L \allowdisplaybreaks\\ 
&=  \cC^{(-1)^{a+1}(\beta  +  \gamma)} ( \cC_i^{-\langle h_i, (-1)^{a+1}\beta \rangle} \bT_i(x), \cC_i^{-\langle h_i,  (-1)^{a+1}\gamma  \rangle} \bT_i(y))_L \allowdisplaybreaks\\
&=  \cC^{\abs{\beta}  + \abs{ \gamma}} (\bS_i (x), \bS_i(y))_L \allowdisplaybreaks\\
&=  \cC^{\abs{\beta}  + \abs{ \gamma}} (x, y)_L \allowdisplaybreaks\\
&=  \pair{x, y}.
\end{align*}
By \cite[Lemma 1.4.4]{LusztigBook}, for any $u\in \Z_{>0}$, we have 
\begin{align*}
\pair{  \rF^{u}_k,  \rF^{u}_k } &= \pair{ \bT_{i_1} \cdots \bT_{i_{k-1}} (f_{i_k,a}^u),  \bT_{i_1} \cdots \bT_{i_{k-1}} (f_{i_k,a}^u) } \allowdisplaybreaks\\
&= \pair{f_{i_k,a}^u, f_{i_k, a}^u} \allowdisplaybreaks\\
&= \cC_{i_k}^{2u} (f_{i_k,a}^u, f_{i_k, a}^u)_L \allowdisplaybreaks\\
&=  q_i^{-u(u-1)/2} (q_{i_k}^{-1} - q_{i_k})^u [u]_{i_k}!.
\end{align*}

\smallskip

\noindent
\eqref{it: reduced ortho3}  By Definition \ref{def: ( )} and \cite[Proposition 38.2.3]{LusztigBook}
(see also \cite[Proposition 4.22]{Kimura12}), we have 
$$
\pair{\rF_{\pd}(\bsu), \rF_{\pd}(\bsu')} = \Opd_{k\in [1,N]} \delta_{u_k, u_k'} \pair{ \rF^{u_k}_k, \rF^{u_k}_k},
$$
which implies the assertion by~\eqref{it: reduced ortho2}.
\end{proof}

The following is an analogue of the \emph{Levendorskii-Soibelman formula} (shortly LS-formula) (see \cite[Proposition 5.2.2]{LS91}).  
\begin{lemma} \label{Lem: pbw comm} \
Let $k, t \in [1,N] $ with $ k < t$. Then we have 
$$
\rF_k \rF_t - q^{ - ( \wt(\rF_k), \wt (\rF_t) ) } \rF_t \rF_k = \sum_{\bsc= (c_{k+1}, \ldots, c_{t-2}, c_{t-1}) \in \Z_{\ge 0}^{\oplus (k,t)}} A_\bsc   \rF^{c_{t-1}}_{t-1}  
\rF^{c_{t-2}}_{t-2}   \cdots  \rF^{c_{k+1}}_{k+1},
$$
for some $ A_\bsc \in \bR$. In particular, if $ t > k+\ell$, then 
$$
\rF_k  \rF_t = q^{ - ( \wt(\rF_k), \wt (\rF_t) ) } \rF_t \rF_k.
$$
\end{lemma}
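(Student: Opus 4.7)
The plan is to first reduce to $k = 1$ by applying the algebra automorphism $\bT_{i_{k-1}}^{-1}\cdots\bT_{i_1}^{-1}$, and then split into three cases according to how $t$ compares with $\ell$ and $\ell + 1$. Each $\bT_i$ is an algebra automorphism of $\qbA{}$ acting on weights by the isometry $s_i$, so this twist preserves both the commutator structure and the scalar $q^{-(\wt(\rF_k),\wt(\rF_t))}$ while sending $\rF_k\mapsto f_{i_k,a}$ and $\rF_s\mapsto \tilde{\rF}_s \seteq \bT_{i_k}\cdots\bT_{i_{s-1}}(f_{i_s,a})$ for $s > k$. Since $(i_k,i_{k+1},\ldots)$ remains locally reduced, after relabeling we may assume $k = 1$.

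When $2 \le t \le \ell$, local reducedness guarantees that $(i_1,\ldots,i_\ell)$ is a reduced expression of $w_\circ$. Iterating Proposition~\ref{Prop: T and S} (using the standard Lusztig fact that $\bS_{i_{r+1}}\cdots\bS_{i_{s-1}}(f_{i_s})\in\ker e'_{i_r}$ whenever $(i_r,\ldots,i_s)$ is reduced) identifies each $\rF_s$ for $s\in[1,\ell]$, under $\qbA{[a]}\simeq\calU_q^-(\g)$, with a nonzero scalar multiple of Lusztig's PBW root vector $\calF_s^{\bfic}$; because $W$-invariance of $(\cdot,\cdot)$ matches the $q$-exponents on the two sides, the classical Levendorskii--Soibelman formula for $(\calF_1^{\bfic},\calF_t^{\bfic})$ in $\calU_q^-(\g)$ (cf.~\cite[Proposition 5.5.2]{LS91}) transports, after rescaling, into the desired identity inside $\qbA{[a]}$. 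When $t = \ell+1$, Lemma~\ref{Lem: Fk+ell=Dfk}(i) gives $\rF_{\ell+1} = \dD(\rF_1) = f_{i_1,a+1}$, and defining relation~(c) of Definition~\ref{Def: extended qg} produces $f_{i_1,a}f_{i_1,a+1} - q_{i_1}^{2}f_{i_1,a+1}f_{i_1,a} = 1 - q_{i_1}^{2}$ with $q_{i_1}^{2} = q^{-(\wt(\rF_1),\wt(\rF_{\ell+1}))}$, realizing the identity with $A_{\mathbf{0}} = 1 - q_{i_1}^{2}$ and all other coefficients zero.

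When $t \ge \ell + 2$ (so $t > k+\ell$), the goal reduces to the pure $q$-commutation $\rF_1\rF_t = q^{-(\wt(\rF_1),\wt(\rF_t))}\rF_t\rF_1$. Apply $\bT_{i_1}^{-1}$ once more: this sends $\rF_1 \mapsto f_{i_1,a-1}\in\qbA{[a-1]}$ and $\rF_t \mapsto \rF_t^{(2)} \seteq \bT_{i_2}\cdots\bT_{i_{t-1}}(f_{i_t,a})$, which is the $(t-1)$-th PBW root vector of the shifted locally reduced sequence $(i_2,i_3,\ldots)$. Since $t-1 \ge \ell+1$ lies outside the first window of the shifted sequence, the analogue of Lemma~\ref{Lem: Fk+ell=Dfk}(i) for the shifted sequence places $\rF_t^{(2)}\in \qbA{[a+p]}$ for some $p \ge 1$. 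Therefore $f_{i_1,a-1}$ and $\rF_t^{(2)}$ lie in windows of $\qbA{}$ separated by at least two, and defining relation~(b) of Definition~\ref{Def: extended qg}, extended multiplicatively to products, yields $f_{i_1,a-1}\rF_t^{(2)} = q^{-(\wt(f_{i_1,a-1}),\wt(\rF_t^{(2)}))}\rF_t^{(2)}f_{i_1,a-1}$; applying $\bT_{i_1}$ back delivers the claimed $q$-commutation, with the exponent matching by $W$-invariance.

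The main subtlety is the adjacent-window case $t\in[\ell+2,2\ell]$, where $\rF_1\in\qbA{[a]}$ and $\rF_t\in\qbA{[a+1]}$ themselves sit in adjacent windows and do not $q$-commute via defining relation~(b) directly. The unlocking idea is the further twist by $\bT_{i_1}^{-1}$, which simultaneously pushes $\rF_1$ down to $\qbA{[a-1]}$ and re-identifies $\rF_t$ as a PBW vector of the shifted locally reduced sequence landing in $\qbA{[a+1]}$, thereby creating the two-window gap needed to invoke~(b) uniformly across all $t \ge \ell+2$.
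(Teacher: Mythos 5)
Your proof is correct and follows essentially the same strategy as the paper: reduce to $k=1$ via the algebra automorphism, handle $t\le\ell$ by transporting the classical Levendorskii--Soibelman formula through Proposition~\ref{Prop: T and S}, handle $t=\ell+1$ directly from relation (c), and for $t\ge\ell+2$ apply $\bT_{i_1}^{-1}$ once more to create a window gap of at least two so that relation (b) yields the pure $q$-commutation before twisting back. The only cosmetic difference is that you make explicit, via the shifted analogue of Lemma~\ref{Lem: Fk+ell=Dfk}, the statement the paper leaves as ``one can easily see'' (and you cite the LS-formula as Proposition 5.5.2 where the paper has 5.2.2); the mathematical content is the same.
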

\begin{proof}
Since $\bT_i$ is an automorphism, we may assume that $k=1$, i.e., $\rF_k= f_{i_1, a}$.

\noindent
$\bullet$ If $ t \le \ell$, then it follows from Proposition \ref{Prop: T and S} and the LS-formula (\cite[Proposition 5.2.2]{LS91}). 

\noindent
$\bullet$ If $t = \ell+1$, then it follows from Definition \ref{Def: extended qg} (c). 

\noindent
$\bullet$ Suppose that $ t > \ell+1$.
We write $ t = v \ell + r $ for some $v\in \Z_{\ge 1}$ and $ r \in [0,\ell-1]$. By Lemma \ref{Lemma: Tw0} and \eqref{Eq: longest case}, we have 
$$
\rF_t = \bT_{i_1} \bT_{i_2} \cdots \bT_{i_{r}} \bT_{w_\circ}^v ( f_{i_{t}, a} )
= \bT_{i_1} \bT_{i_2} \cdots \bT_{i_{r}} ( f_{i_{t+ \ell v}, a+v} ). 
$$
Since $v\ge 1$, by the definition $\bT_i$ and Definition \ref{Def: extended qg} (b), one can easily see that $f_{i_1, a-1}$ commutes with $ \bT_{i_1}^{-1} \rF_t = \bT_{i_2} \cdots \bT_{i_{r}} ( f_{i_{t+ \ell v}, a+v} )$ up to a power of $q$, i.e., 
$$
 (\bT_{i_1}^{-1} \rF_t) f_{i_1, a-1} =  
 q^{-( \wt( f_{i_1, a-1} ), \wt( \bT_{i_1}^{-1} \rF^{\bfi}_t ) )} f_{i_1, a-1}   (\bT_{i_1}^{-1} \rF_t).
$$
Applying $\bT_{i_1}$ to the above, we have the assertion. 
\end{proof}

The following proposition says that the bilinear form  is invariant under the action $\bT_i$.

\begin{proposition} \label{Prop: invariant form}
Let $i\in I$. For any $x,y \in \qbA{} $, we have   
$$
\pair{\bT_i(x), \bT_i(y)} = \pair{x,y}.
$$
\end{proposition}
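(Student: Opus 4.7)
The plan is to leverage the orthogonal PBW bases from Theorem~\ref{Thm: orthonormal_reduced} and exhibit $\bT_i$ as a pure index shift between two such bases. By bilinearity of $\pair{\,,\,}$ it suffices to verify the identity on basis elements. Given $x, y \in \qbA{}$, pick $a \le b$ with $x, y \in \qbA{[a,b]}$; since $\bT_i(f_{j,p}) \in \qbA{[p,p+1]}$ by definition, we have $\bT_i(x), \bT_i(y) \in \qbA{[a, b+1]}$, and the form computed on $\qbA{[a,b+1]}$ agrees with the one on any larger interval by Definition~\ref{def: ( )} together with Lemma~\ref{Lem: qA decomp}. So we work inside $\qbA{[a, b+1]}$.

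Fix a locally reduced bi-infinite sequence $\bfi^\Diamond = (i_k)_{k\in\Z}$ with $i_1 = i$ and set $N \seteq \ell(b-a+1)$. Introduce the two PBW data
$$
\pd' \seteq (\bfi',\, a), \quad \bfi' \seteq (i_2, i_3, \ldots, i_{N+1}), \qquad
\pd'' \seteq (\bfi'',\, a), \quad \bfi'' \seteq (i_1, i_2, \ldots, i_{N+\ell}).
$$
Both $\bfi'$ and $\bfi''$ are locally reduced as contiguous subsequences of $\bfi^\Diamond$, with lengths $N = \ell(b-a+1)$ and $N + \ell = \ell(b-a+2)$ respectively; so by Theorem~\ref{Thm: orthonormal_reduced}, $\sfP_{\pd'}$ is an orthogonal basis of $\qbA{[a,b]}$ and $\sfP_{\pd''}$ is an orthogonal basis of $\qbA{[a, b+1]}$. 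Unfolding~\eqref{Eq: PBW vectors} and using $i_1 = i$ yields the key identity
$$
\bT_i \bl \rF_k^{\pd'} \br \;=\; \bT_{i_1}\bT_{i_2}\cdots\bT_{i_k}(f_{i_{k+1},a}) \;=\; \rF_{k+1}^{\pd''}, \qquad k \in [1, N].
$$
Since $\bT_i$ is an algebra automorphism, this promotes, for every $\bsu = (u_1, \ldots, u_N) \in \Z_{\ge 0}^{\oplus N}$, to
$$
\bT_i \bl \rF_{\pd'}(\bsu) \br = \rF_{\pd''}(\bsu^\sharp), \qquad \bsu^\sharp_k = \begin{cases} u_{k-1} & \text{if } k \in [2, N+1], \\ 0 & \text{otherwise.} \end{cases}
$$

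To finish, one compares the two norms via Theorem~\ref{Thm: orthonormal_reduced}\eqref{it: reduced ortho3}. All factors in the product for $\pair{\rF_{\pd''}(\bsu^\sharp), \rF_{\pd''}(\bsv^\sharp)}$ indexed by $k \in \{1\} \cup [N+2, N+\ell]$ are trivial since $\bsu^\sharp_k = \bsv^\sharp_k = 0$ there, and the surviving factors (for $k \in [2, N+1]$), after the reindexing $k \leftrightarrow k-1$ and the matching $(\bfi'')_k = i_k = (\bfi')_{k-1}$, reproduce the product defining $\pair{\rF_{\pd'}(\bsu), \rF_{\pd'}(\bsv)}$. Therefore
$$
\pair{\bT_i \rF_{\pd'}(\bsu),\, \bT_i \rF_{\pd'}(\bsv)} = \pair{\rF_{\pd''}(\bsu^\sharp), \rF_{\pd''}(\bsv^\sharp)} = \pair{\rF_{\pd'}(\bsu), \rF_{\pd'}(\bsv)},
$$
and bilinearity extends this to $\pair{\bT_i x, \bT_i y} = \pair{x, y}$ for all $x, y \in \qbA{}$. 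The one delicate point is the simultaneous choice of $\pd'$ and $\pd''$ so that $\bT_i$ becomes a pure index shift between orthogonal bases whose norms are supplied in closed form by Theorem~\ref{Thm: orthonormal_reduced}; once these data are in hand the remainder is routine bookkeeping.
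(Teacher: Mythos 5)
Your proof is correct and follows essentially the same strategy as the paper's own argument: realize $\bT_i$ as a pure index shift between two PBW bases arising from a locally reduced bi-infinite sequence (here $\pd'$ and $\pd''$; in the paper $\pd$ and $\pdh$ with $\pdh$'s sequence shifted by one), then invoke the explicit orthogonality formula of Theorem~\ref{Thm: orthonormal_reduced}\eqref{it: reduced ortho3}, whose factors depend only on the letters of the sequence and so are invariant under the reindexing. The cosmetic difference is that you work in a finite window $\qbA{[a,b]} \subset \qbA{[a,b+1]}$ while the paper reduces to $\qbA{\ge 0}$ via $\dD$-invariance and works with infinite sequences, but the substance is identical.
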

\begin{proof}
Since $\pair{\ , \ }$ is invariant under the action $\dD$, it suffices to show that $\pair{\bT_i(x), \bT_i(y)} = \pair{x,y}$ for any $x, y \in \qbA{\ge 0}$. 

We choose a locally reduced sequence $\bfi^{\Diamond} = (i_k)_{k\in \Z}$ satisfying \eqref{Eq: longest case} and $i_0 = i$, 
and set another $\bfj^\Diamond \seteq (j_k)_{k\in \Z}$ such that $ j_k = i_{k-1}$ for any $k\in \Z$. We set 
\begin{align*} 
\bfi \seteq (i_1, i_2, i_3, \ldots) \in I^{[1, \infty]}  \quad \text{ and }\quad 
\bfj \seteq (j_1, j_2, j_3, \ldots) \in I^{[1, \infty]} 
\end{align*} 
and define PBW data $\pd\seteq ( \bfi, 0 )$ and $\pdh\seteq ( \bfj, 0 )$.
Note that 
$\rF^{\pdh}_{k+1} = \bT_i (\rF^{\pd}_k) $ for any $k\ge 1$ and $\rF^{\pdh}_{1}= f_{i,0}$.

Let $\bsu : =(u_k)_{k\ge 1}, \bsu' : =(u_k')_{k\ge 1} \in \Z_{\ge0}^{\oplus \Z_{ \ge1}}$ and set $\bsv : =(v_k)_{k\ge 1}$ and $\bsv' : =(v_k')_{k\ge 1}$ such that 
$$
\text{$ v_1=v_1'=0$, $v_{k} = u_{k-1}$ and $v_{k}' = u_{k-1}'$ for any $k>1$.} 
$$
Since the PBW monomials $\rF_{\pd}(\bfd)$ form an orthogonal basis of $\qbA{\ge 0}$ by Theorem \ref{Thm: orthonormal_reduced},  we may assume that $x = \rF_{\pd}(\bsu)$ and $y = \rF_{\pd}(\bsu')$. 
By construction, we have $ \bT_i(x) =  \rF_{\pdh}(\bsv)$ and $\bT_i(y) = \rF_{\pdh}(\bsv')$,
 and Theorem \ref{Thm: orthonormal_reduced} tells us that 
\begin{align*}
\pair{ x, y } & = \prod_{k=1}^\infty \delta_{u_k, u_k'} q_{i_k}^{-u_{k}(u_{k}-1)/2} (q_{i_k}^{-1}-q_{i_k})^{u_k}  [u_{k}]_{i_k}! \allowdisplaybreaks\\
& = \prod_{k=2}^\infty \delta_{v_k, v_k'} q_{j_k}^{-v_{k}(v_{k}-1)/2} (q_{j_k}^{-1}-q_{j_k})^{v_k}  [v_{k}]_{j_k}! \allowdisplaybreaks\\
&= \pair{ \bT_i(x), \bT_i(y) },
\end{align*}
where the last equality follows from that $v_1=v_1'=0$. 
\end{proof}

\subsection{Case for $\qbA{}$} 

We keep the notations appeared in the previous subsection.
Let $\bfi^\Diamond =(i_k)_{k\in \Z}$ be a sequence satisfying \eqref{Eq: longest case}. 
We  set  $j_k = i_{-k}$ for $k\in [0,\infty]$ and  
\begin{align*} 
\bfi^- \seteq (j_0, j_{1}, j_2, \ldots) \in I^{ [0, \infty] }  \quad \text{ and }\quad 
\bfi \seteq (i_1, i_2, i_3, \ldots) \in I^{[1, \infty]}.
\end{align*}
The two PBW root vectors in~\eqref{Eq: PBW vectors} are related as follows.

\begin{lemma} \label{lem: F anf vF}
Let $a,b \in \Z$ with $a \le b$ and set $N\seteq \ell \times (b-a+1)$. 
Then we have 
$$
\rF^{(\bfi,a)}_{k} = \rFv^{(\bfi^-,b)}_{N-k}  \quad \text{for any $k \in [1, N]$.}
$$
\end{lemma}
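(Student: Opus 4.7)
The plan is to rewrite $\rF^{(\bfi,a)}_{k}$ as an explicit composition of inverse braid operators and match it termwise with $\rFv^{(\bfi^-,b)}_{N-k}$. The crucial preliminary identity is
\[
\bT_{i_1}\bT_{i_2}\cdots\bT_{i_N} \;=\; \bT_{w_\circ}^{N/\ell},
\]
which follows by splitting the product into $N/\ell$ consecutive blocks of length $\ell$; each block is a reduced expression of $w_\circ$ by~\eqref{Eq: longest case}, and Proposition~\ref{Prop: Ti braid} guarantees $\bT_{i_m}\cdots\bT_{i_{m+\ell-1}} = \bT_{w_\circ}$ regardless of the starting point~$m$.

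Starting from $\rF^{(\bfi,a)}_{k} = \bT_{i_1}\cdots\bT_{i_{k-1}}(f_{i_k,a})$, I insert the identity $\bT_{i_k}\cdots\bT_{i_N}\cdot\bT_{i_N}^{-1}\cdots\bT_{i_k}^{-1}$ and use the displayed identity to obtain
\[
\bT_{i_1}\cdots\bT_{i_{k-1}} \;=\; \bT_{w_\circ}^{N/\ell}\cdot\bT_{i_N}^{-1}\bT_{i_{N-1}}^{-1}\cdots\bT_{i_k}^{-1}.
\]
Applying this to $f_{i_k,a}$ and using $\bT_{i_k}^{-1}(f_{i_k,a}) = f_{i_k,a-1}$ gives
\[
\rF^{(\bfi,a)}_{k} \;=\; \bT_{w_\circ}^{N/\ell}\,\bT_{i_N}^{-1}\bT_{i_{N-1}}^{-1}\cdots\bT_{i_{k+1}}^{-1}\bigl(f_{i_k,a-1}\bigr).
\]
Next I factor $\bT_{w_\circ}^{N/\ell} = \dD^{N/\ell}\circ\inv^{N/\ell}$ via Lemma~\ref{Lemma: Tw0} and push $\inv^{N/\ell}$ rightward through each $\bT^{-1}_{i_m}$ by means of the intertwining relation $\inv\circ\bT_i = \bT_{\inv(i)}\circ\inv$ from Lemma~\ref{Lem: Ti and others}~\eqref{it: sigma T}. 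The locally reduced property forces $\inv(i_m) = i_m^* = i_{m+\ell}$, so iterating yields $\inv^{N/\ell}(i_m) = i_{m+N}$; the sequence $(i_k)_{k\in\Z}$ has period $2\ell$ (since $\inv^2=\id$), and since $2N$ is a multiple of $2\ell$ we may equally well write this as $i_{m-N}$. Hence the indices $N, N-1,\ldots, k+1$ are relabelled to $0,-1,\ldots,k-N+1$, and $\inv^{N/\ell}(f_{i_k,a-1}) = f_{i_{k-N},a-1}$.

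Finally, I commute the central $\dD^{N/\ell}$ past the remaining inverse braid operators using Lemma~\ref{Lem: Ti and others}~\eqref{it: d T} and apply it to the $f$-element: the second index shifts from $a-1$ to $a-1+N/\ell = b$, where the last equality uses $N = \ell(b-a+1)$. Collecting these steps,
\[
\rF^{(\bfi,a)}_{k} \;=\; \bT_{i_0}^{-1}\bT_{i_{-1}}^{-1}\cdots\bT_{i_{k-N+1}}^{-1}\bigl(f_{i_{k-N}, b}\bigr),
\]
which, after the substitution $j_m = i_{-m}$, coincides with $\rFv^{(\bfi^-,b)}_{N-k}$. The only delicate point is the bookkeeping of indices under $\inv^{N/\ell}$, specifically the modular identification $i_{m+N}=i_{m-N}$ that allows the rewritten operator sequence to line up with the reversed sequence $\bfi^-$; the rest of the argument is a transparent rearrangement based on the elementary compatibilities between $\bT_i$, $\dD$, $\inv$, and $\bT_{w_\circ}$ collected in Section~\ref{Sec: bf and bg}.
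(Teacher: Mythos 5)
Your proof is correct, and it is the same computation as the paper's in substance but packaged differently. Both arguments hinge on the same three facts: every consecutive block of $\ell$ indices in the locally reduced sequence composes to $\bT_{w_\circ}$; Lemma~\ref{Lemma: Tw0} gives $\bT_{w_\circ} = \dD\circ\inv$; and $\bT_i$ intertwines with $\dD$ and with diagram automorphisms as in Lemma~\ref{Lem: Ti and others}. The paper composes on the \emph{left} with the very operator $\mathcal{S}=\bT_{i_{k-N+1}}\cdots\bT_{i_0}$ that appears in $\rFv^{(\bfi^-,b)}_{N-k}$, recognizes $\mathcal{S}\bT=\bT_{i_{k-N}}^{-1}(\bT_{w_\circ})^{b-a+1}$ all at once, evaluates this on $f_{i_k,a}$ to get $f_{i_{k-N},b}$, and then simply applies $\mathcal{S}^{-1}$ to both sides. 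You instead extend $\bT_{i_1}\cdots\bT_{i_{k-1}}$ on the \emph{right} to reach $\bT_{w_\circ}^{N/\ell}$, peel off the surplus as $\bT_{i_N}^{-1}\cdots\bT_{i_k}^{-1}$, factor $\bT_{w_\circ}^{N/\ell}=\dD^{N/\ell}\circ\inv^{N/\ell}$, and push $\inv^{N/\ell}$ past each inverse operator term by term (using $\inv(i_m)=i_{m+\ell}$ and period $2\ell$ to identify $i_{m+N}=i_{m-N}$), then finally commute $\dD^{N/\ell}$ to the inside. The paper's route absorbs all the index bookkeeping into a single global factorization and is therefore shorter; your route makes the modular relabelling explicit and is arguably easier to audit line by line, at the cost of a longer argument. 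Either is acceptable; there is no gap.
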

\begin{proof}
We set $ \mathcal{S}\seteq \bT_{i_{k-N+1}} \cdots \bT_{i_{-1}}  \bT_{i_{0}} $ and $ \bT\seteq \bT_{i_1} \bT_{i_2} \cdots \bT_{i_{k-1}} $. 
Since $\mathcal{S} \bT =  \bT_{ s_{i_{k-N}} w_\circ} (\bT_{w_\circ})^{b-a}  $, we have 
$$ 
\mathcal{S} \bT (f_{i_k, a}) =  \bT_{i_{k-N}}^{-1} (\bT_{w_\circ})^{b-a+1} (f_{i_{k}, a})
= \bT_{i_{k-N}}^{-1} (f_{i_{k-N}, b+1}) = f_{i_{k-N}, b}
$$
by Lemma \ref{Lemma: Tw0}. Thus we have $ \bT (f_{i_k, a}) = \mathcal{S}^{-1} (f_{i_{k-N}, b})$, which gives the assertion. 
\end{proof}

We define the PBW datum $\pdh\seteq ( \bfi^-, -1 )$ and $\pd\seteq ( \bfi, 0)$, and for $k\in [1,\infty]$ and $t\in [-\infty, 0]$ set 
$$
\rF_k\seteq \rF^{\pd}_{k} \quad \text{ and } \quad  \rF^\star_{t}\seteq \rFv^{\pdh}_{-t}.
$$

Let  $\bst = (t_{k})_{k\in \Z} \in \Z_{\ge0}^{\oplus \Z}$ and write
$\bst^- = (t_{k} )_{k \le 0} \in \Z_{\ge0}^{\oplus \Z_{\le 0}}$ and $  \bst^+  = (t_{k} )_{k > 0} \in \Z_{\ge0}^{\oplus \Z_{>0}}$ so that $\bst = \bst^- * \bst^+ $. Here $A*B$ is the concatenation of $A$ and $B$.
We define 
$$
\rFt_{\bfi^\Diamond}(\bst) \seteq \rF_{\pd}(\bst^+) \cdot  \rF_{\pdh}^\star(\bst^-) , 
$$
where 
\begin{align*}
\rF_{\pdh}^\star (\bst^-) \seteq \Opd^{\longrightarrow}_{k\in [-\infty,0]}  (\rF^{\star}_k)^{u_k} = 
(\rF^{\star}_0)^{u_0} \cdot (\rF^{\star}_{-1})^{u_{-1}} 
\cdot (\rF^{\star}_{-2})^{u_{-2}} \cdots 
\end{align*}

Let $a \in \Z_{<0}$ and $b \in \Z_{>0}$ and set 
$$ 
\bfZ_{[a,b]}\seteq \{ \bst = (t_{k})_{k\in \Z} \in \Z_{\ge0}^{\oplus \Z} \mid t_k=0 \text{ unless $ a \ell < k \le (b+1)\ell $} \}.
$$
It follows by applying Lemma \ref{Lem: qA decomp}, Lemma \ref{lem: F anf vF} and Theorem \ref{Thm: orthonormal_reduced} to $ \qbA{[a,b]} \simeq  \qbA{[0,b]} \otimes_\bR \qbA{[a,0)}$ that the set
$$
\sfP_{\bfi^\Diamond;[a,b]} \seteq \{ \rFt_{\bfi^\Diamond}(\bst)  \mid \bst \in \bfZ_{[a,b]} \}
$$
forms an orthogonal basis of $\qbA{[a,b]}$ satisfying the same properties of Theorem \ref{Thm: orthonormal_reduced}.
Considering the case when $a \rightarrow -\infty$ and $b \rightarrow \infty$ respectively, we obtain the following.

\begin{corollary} \label{Cor: orthonormal_reduced_Aq}
Let $a \in \Z_{<0} \cup \{ -\infty\} $ and $  b\in \Z_{>0} \cup \{ \infty \}$.
\bnum
\item The set $\sfP_{\bfi^\Diamond;[a,b]}$ is a $\bR$-linear orthogonal basis of $\qbA{[a,b]}$ with respect to $\pair{\ , \ }$.
\item For any $\bst = (t_{k}), \bst' = (t_{k}') \in \Z_{\ge0}^{\oplus (a\ell, (b+1)\ell]}$, we have 
$$
\pair{\rFt_{\bfi^\Diamond}(\bst), \rFt_{\bfi^\Diamond}(\bst')} = \prod_{k= a\ell+1}^{(b+1)\ell} \delta_{t_k, t_k'} q_{i_k}^{-t_{k}(t_{k}-1)/2} (q_{i_k}^{-1}-q_{i_k})^{t_k}  [t_{k}]_{i_k}!.
$$
\ee
\end{corollary}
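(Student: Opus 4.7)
The finite case of the corollary is essentially announced in the paragraph preceding the statement. My plan is to flesh out that sketch carefully, and then pass to the limits $a\to-\infty$, $b\to\infty$ by a filtered-union argument.

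For $a,b$ finite, the starting point is the tensor decomposition
\[
\qbA{[a,b]} \simeq \qbA{[0,b]} \otimes_{\bR} \qbA{[a,0)}
\]
from Lemma \ref{Lem: qA decomp}, combined with the multiplicativity of $\pair{\ ,\ }$ across tensor factors (Lemma \ref{Lem: properties of (,)}(ii)(a)). Both $\bfi$ (indexed by $[1,\infty]$) and its reverse $\bfi^-$ (indexed by $[0,\infty]$) are locally reduced by the hypothesis on $\bfi^\Diamond$. For the right-hand tensor factor, I would apply Theorem \ref{Thm: orthonormal_reduced} directly to the datum $(\bfi,0)$ on the interval $[0,b]$: the monomials $\rF_\pd(\bst^+)$ with $\bst^+\in\Z_{\ge 0}^{\oplus(0,(b+1)\ell]}$ form an orthogonal basis with the stated self-pairings. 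For the left-hand tensor factor $\qbA{[a,0)}$, I would use Lemma \ref{lem: F anf vF} to translate the backward PBW monomials $\rF_\pdh^\star(\bst^-)$ with $\bst^-\in\Z_{\ge 0}^{\oplus(a\ell,0]}$ into forward PBW monomials for a locally reduced datum based at $a$, and then invoke Theorem \ref{Thm: orthonormal_reduced} again. Assembling the two factors via the multiplicativity of $\pair{\ ,\ }$ produces both the orthogonal basis $\sfP_{\bfi^\Diamond;[a,b]}$ of $\qbA{[a,b]}$ and the product formula in (ii).

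For $a=-\infty$ or $b=\infty$, the remaining step is a direct-limit argument. The family $\{\qbA{[a,b]}\}_{a,b\text{ finite}}$ is filtered by inclusion, and $\qbA{} = \bigcup_{a,b}\qbA{[a,b]}$ since every generator $f_{i,p}$ lies in some finite slab. Correspondingly $\bfZ_{[a,b]}\subseteq\bfZ_{[a',b']}$ whenever $[a,b]\subseteq[a',b']$, so the finite bases $\sfP_{\bfi^\Diamond;[a,b]}$ nest under inclusion and their union is $\sfP_{\bfi^\Diamond;[-\infty,\infty]}$. Linear independence and spanning in a vector space are detected on finite subsets, so the basis property transfers to the direct limit; the pairing formula concerns only two basis vectors at a time, hence persists as well.

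The main technical obstacle I anticipate is purely bookkeeping: aligning the indexing conventions in Lemma \ref{lem: F anf vF} so that the backward datum $\pdh=(\bfi^-,-1)$ defining $\rF^\star_t$ converts cleanly into a forward PBW datum over the positions $(a\ell,0]$, and so that the $k$-th factor of the product formula in (ii) corresponds to position $k\in(a\ell,0]$ with the correct label $i_k$ and parameter $q_{i_k}$. Once this dictionary is fixed, everything else is a direct combination of Lemma \ref{Lem: qA decomp}, Lemma \ref{Lem: properties of (,)} and Theorem \ref{Thm: orthonormal_reduced}.
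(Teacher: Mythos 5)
Your proposal follows the paper's own route exactly: the finite case is reduced to Theorem \ref{Thm: orthonormal_reduced} by the decomposition $\qbA{[a,b]}\simeq\qbA{[0,b]}\otimes_{\bR}\qbA{[a,0)}$, the left factor handled via Lemma \ref{lem: F anf vF}, and the infinite cases by the filtered union over finite slabs. This matches the argument sketched in the paragraph preceding the corollary, with the bookkeeping you flag (the dictionary between $\rF_{\pdh}^{\star}$ and forward PBW vectors based at $a$) being the only detail the paper leaves implicit.
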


Note that the monomials $\rF_{\pd}(\bst^+)$ (resp.\ $\rF_{\pdh}^\star(\bst^-)$) form a $\bR$-linear orthogonal basis of $\qbA{\ge0}$ (resp.\ $\qbA{<0}$) with respect to $\pair{\ , \ }$.
For any $i\in I$ and $a\in \Z$, we set
\begin{align*}
(\qbA{\ge a})_i &\seteq \Span_\bR \{ xy \mid x \in \qbA{>a}, \ y \in \qbA{[a]} \text{ with $\es_i(y)=0$}  \}, \allowdisplaybreaks\\
{_i}(\qbA{\le a}) &\seteq \Span_\bR \{ xy \mid y \in \qbA{<a}, \ x \in \qbA{[a]} \text{ with $e'_i(x)=0$}  \}.
\end{align*}
\begin{lemma} \label{Lem: codomain of T_i}
Let $i\in I$ and $a\in \Z$.
\bnum
\item $(\qbA{\ge a})_i$ and $_{i}(\qbA{\le a})$ are subalgebras of $\qbA{}$. 
\item For any $u \in \qbA{\ge a} $,  we have $\bT_i(u)\in (\qbA{\ge a})_i.$
\item For any $u \in \qbA{\le a} $,  we have $\bT^{-1}_i(u)\in {_{i}}(\qbA{\le a}).$
\ee
\end{lemma}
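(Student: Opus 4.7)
The plan is to reduce all three parts to a single commutativity identity on $\calU_q^-(\g)$: the derivations $e'_j$ and $\es_i$ commute for all $i,j \in I$. Starting from \eqref{Eq: ei ei*}, a direct expansion gives
\[
(\es_i e'_j - e'_j \es_i)(xy) = q_i^{\langle h_i,\wt(y)\rangle}(\es_i e'_j - e'_j \es_i)(x)\,y + q_j^{\langle h_j,\wt(x)\rangle} x\,(\es_i e'_j - e'_j \es_i)(y) + c\cdot\es_i(x)\,e'_j(y),
\]
with $c = q_j^{\langle h_j,\wt(x)\rangle}q_i^{\langle h_i,\wt(y)\rangle}(q_i^{c_{i,j}} - q_j^{c_{j,i}})$. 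The identity $\sfd_i c_{i,j} = \sfd_j c_{j,i} = (\al_i,\al_j)$ forces $c=0$, while $(\es_i e'_j - e'_j \es_i)(f_k) = 0$ trivially; induction on the height of $\wt(x)$ then yields $\es_i\circ e'_j = e'_j\circ\es_i$ on $\calU_q^-(\g)$, hence on every $\qbA{[k]}$ via Lemma \ref{Lem: id A and U}.

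For Part (1), I treat $(\qbA{\ge a})_i$ (the other case is symmetric). Since $\{y \in \qbA{[a]} : \es_i(y) = 0\}$ is a subalgebra of $\qbA{[a]}$ by \eqref{Eq: ei ei*}, closure of $(\qbA{\ge a})_i$ under multiplication reduces via $(x_1 y_1)(x_2 y_2) = x_1(y_1 x_2)y_2$ to showing: for $y \in \qbA{[a]}$ with $\es_i(y)=0$ and $x \in \qbA{>a}$, $yx \in (\qbA{\ge a})_i$. By induction on the number of generators in a monomial expression for $x$, it suffices to treat $x = f_{j,p}$ with $p > a$. For $p \ge a+2$, $y$ and $f_{j,p}$ $q$-commute by relation~\eqref{it: relation2} of Definition \ref{Def: extended qg}. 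For $p = a+1$, Lemma \ref{Lem: x f_i,k+1} gives
\[
y f_{j,a+1} = q_j^{-\langle h_j,\wt(y)\rangle}\bigl(f_{j,a+1}\,y + q_j^{-1}(q_j^{-1}-q_j)\,e'_j(y)\bigr);
\]
the first term is in $(\qbA{\ge a})_i$ since $\es_i(y)=0$, and by the commutativity $\es_i(e'_j(y)) = e'_j(\es_i(y)) = 0$, so $e'_j(y) \in \qbA{[a]}\cap\ker\es_i \subset (\qbA{\ge a})_i$ as well.

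For Part (2), since $\bT_i$ is an algebra automorphism and $(\qbA{\ge a})_i$ is now a subalgebra, it suffices to check $\bT_i(f_{j,p}) \in (\qbA{\ge a})_i$ for all generators $(j,p)$ with $p \ge a$. The cases $i=j$ (where $\bT_i(f_{i,p}) = f_{i,p+1} \in \qbA{>a}$), $c_{i,j}=0$ with $i\neq j$ (where $\bT_i(f_{j,p}) = f_{j,p}$, and $\es_i(f_{j,a}) = 0$), and $p > a$ with $c_{i,j}<0$ (where $\bT_i(f_{j,p}) \in \qbA{[p]} \subset \qbA{>a}$) are immediate from \eqref{Eq: Ti fjp}. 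The substantive case is $p=a$, $c_{i,j}<0$: comparing \eqref{Eq: Ti fjp} with \eqref{eq: Si}, $\bT_i(f_{j,a}) \in \qbA{[a]}$ corresponds under Lemma \ref{Lem: id A and U} to $\cC_i^{c_{i,j}}\bS_i(f_j) \in \calU_i = \ker\es_i$ by \eqref{eq: Ui}. Part (3) then follows by applying the anti-automorphism $*$: it sends $\qbA{\le a}$ to $\qbA{\ge -a}$ and satisfies $* \circ \es_i = e'_i \circ *$, so it maps $(\qbA{\ge -a})_i$ onto ${_i}(\qbA{\le a})$; combined with Lemma \ref{Lem: Ti and others}~\eqref{it: star T} ($\bT_i^{-1} = *\circ \bT_i\circ *$), Part (2) applied to $*u$ yields the claim.

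The technical heart, and my expected main obstacle, is Part (1)---specifically, guaranteeing that the condition $\es_i(y)=0$ is preserved by the derivation correction $e'_j(y)$ produced when commuting a $\qbA{[a]}$-factor past $f_{j,a+1}$. This is exactly what the commutativity identity $\es_i e'_j = e'_j \es_i$ provides; without it, $e'_j(y)$ could escape $\ker\es_i$ and the entire induction would collapse.
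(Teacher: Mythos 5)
Your proof is correct but takes a genuinely different route from the paper's. The paper proves Part~(1) via the PBW machinery already established in this section: it fixes a locally reduced sequence $\bfi$ with $i_1=i$, notes that $\es_i(\rF^{\pd}_k)=0$ for $2\le k\le\ell$, so that the PBW monomials $\rF_{\pd}(\bsu)$ with $u_1=0$ give an orthogonal basis of $(\qbA{\ge a})_i$ by Corollary~\ref{Cor: orthonormal_reduced_Aq}, and then reads off the subalgebra property from the Levendorskii--Soibelman formula of Lemma~\ref{Lem: pbw comm} (the correction terms involve only $\rF^{\pd}_j$ with $j$ strictly between the two indices, hence $j>1$). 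For Part~(2) the paper writes a general element of $\qbA{\ge 0}$ as a combination of $x\,f_{i,0}^m\,y$ with $y\in\ker e_i'$, applies $\bT_i$ factorwise, and invokes Proposition~\ref{Prop: T and S} on the $y$-factor; the $\prescript{}{i}{(\qbA{\le a})}$ case is dispatched symmetrically rather than via $*$.

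Your argument sidesteps the orthogonality and LS machinery entirely. The key input is the commutativity $\es_i\circ e'_j=e'_j\circ\es_i$, which you derive correctly from \eqref{Eq: ei ei*} using $\sfd_i c_{i,j}=\sfd_j c_{j,i}$ (this identity is classical, going back to Kashiwara). With it, the only nontrivial commutation in closing $(\qbA{\ge a})_i$ under multiplication is the level-$(a{+}1)$ case handled by Lemma~\ref{Lem: x f_i,k+1}, and the derivation correction $e'_j(y)$ stays inside $\ker\es_i$. Part~(2) then reduces to generators by the automorphism property, the only substantive case being $\bT_i(f_{j,a})$ with $c_{i,j}<0$, which you identify by direct formula comparison with $\cC_i^{c_{i,j}}\bS_i(f_j)\in\calU_i$ under \eqref{eq: Ui}; and Part~(3) follows through $*$. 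The trade-off: the paper's proof is shorter once Lemma~\ref{Lem: pbw comm} and Corollary~\ref{Cor: orthonormal_reduced_Aq} are in hand and fits the section's narrative, while yours depends only on the defining relations, Lemma~\ref{Lem: x f_i,k+1}, \eqref{eq: Ui}, and the $*$-involution, so it is more self-contained and could be proved before any of the PBW orthogonality results.
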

\begin{proof}
We shall focus on the case for $(\qbA{\ge a})_i$ since the other case for ${ _i}(\qbA{\le a})$ can be proved in the same manner. Without loss of generality, we assume that $a=0$.

We choose a locally reduced sequence $\bfi = (i_k)_{k\in \Z_{\ge 1}}   \in  I^{ [1, \infty]}$ such that $i_1 = i$. Set $\pd\seteq (\bfi, 0) $.
Then $\rF^{\pd}_1 =f_{i, 0}$ and 
$ \es_i( \rF^{\pd}_k ) = 0 $ for any $ k=2, \ldots, \ell$ by Proposition \ref{Prop: T and S}.
Thus all monomials of the PBW vectors $ \rF^{\pd}_k $ ($k \in \Z_{>1}$) form  an orthogonal basis of $(\qbA{\ge 0})_i$ by Corollary \ref{Cor: orthonormal_reduced_Aq}. Lemma \ref{Lem: pbw comm} says that $(\qbA{\ge 0})_i$ is a subalgebra. 

By the definition of $\bT_i$, it is obvious that $ \bT_i(u) \in \qbA{\ge a} $ for any $a\in \Z$ and $u \in \qbA{\ge a} $.
Let $x\in \qbA{\ge 1} $, $y  \in \qbA{[0]}$ with $e'_i(y)=0$. By Theorem \ref{Prop: T and S}, we have 
\begin{align*}
\bT_i (x f_{i,0}^m y) = \bT_i (x) \bT_i( f_{i,0}^m) \bT_i( y) = 
\left(\bT_i (x) f_{i,1}^m \right) \bT_i(y) \in (\qbA{\ge 0})_i
\end{align*}
for any $m\in \Z_{\ge0}$.
Since such elements $x f_{i,0}^m y$ spans $\qbA{\ge 0}$, we have $ \bT_i(u) \in (\qbA{\ge 0})_i $ for any $u \in \qbA{\ge 0} $.
\end{proof}

\begin{example} \label{Ex: example1}
For homogeneous elements $x, y \in  \qbA{}$, we define 
$$
[x,y]_q\seteq xy - q^{-( \wt(x), \wt(y)) } yx.
$$
Note that $\bT_i( [x,y]_q) = [\bT_i(x),  \bT_i(y)]_q$ 
because $(\beta, \gamma) = ( s_i(\be), s_i(\gamma))$ for any $\beta, \gamma \in \rl$.
One can show that  
\bnum
\item If $[x,z]_q=0$, then  $[x, [y,z]_q]_q = [[x,y]_q,z]_q$, 
\item If $[x,y]_q=0$, then  $[x, [y,z]_q]_q = q^{-( \alpha, \beta)} y [x,z]_q  - q^{-( \beta, \gamma)}  [x,z]_qy $, 
\item \label{it: 4th computation} If $[y,z]_q=0$, then $[ [x,y]_q, z ]_q = q^{-(\beta, \gamma)} [x,z]_q y - q^{-(\al, \beta)} y [x,z]_q$,
\ee
for any homogeneous elements $x,y,z$.

Let $\cmC$ be of type $A_2$ and let $a=0$ and $b=1$. Note that $\ell = \ell(w_\circ)=3$. We consider the locally reduced sequence  
$$
\bfi\seteq (1,2,1,2,1,2) \in I^{[1,6]}
$$
and set $\pd\seteq (\bfi, 0)$. We then have $\rF^{\pd}_{1} = f_{1,0}$, $\rF^{\pd}_{2} = \bT_1(f_{2,0})=\cC^{-1}[f_{1,0}, f_{2,0}]_q$ and 
\begin{align*}
\rF^{\pd}_{3} &= \bT_1 \bT_2 ( f_{1,0} ) = \bT_1 ( \kappa^{-1} [f_{2,0}, f_{1,0}]_q )
=  \kappa^{-1} [ \bT_1(f_{2,0}), \bT_1(f_{1,0})]_q \allowdisplaybreaks\\
&= \kappa^{-2} [ [f_{1,0}, f_{2,0}]_q, f_{1,1}]_q \allowdisplaybreaks\\
& = \kappa^{-2} ( q^{-1}[f_{1,0}, f_{1,1}]_qf_{2,0} - q f_{2,0}[f_{1,0}, f_{1,1}]  ) \allowdisplaybreaks\\
&= f_{2,0},
\end{align*}
where the fifth equality follows from~\eqref{it: 4th computation} and $[f_{2,0}, f_{1,1}]_q=0$, and the last equality follows from $[f_{1,0}, f_{1,1}]_q=1-q^2$. By the same manner, we have
\begin{equation*}
\begin{aligned}
 \rF^{\pd}_{4} = f_{1,1}, \quad \rF^{\pd}_{5} = \cC^{-1}[f_{1,1}, f_{2,1}]_q, \quad \rF^{\pd}_{6} = f_{2,1}.
\end{aligned}
\end{equation*}
It is easy to see that $\rF^{\pd}_{1}$, $\rF^{\pd}_{2}$, \ldots, $\rF^{\pd}_{6}$  are bar-invariant. Theorem \ref{Thm: orthonormal_reduced} says that $\rF^{\pd}_{1}$, $\rF^{\pd}_{2}$, \ldots, $\rF^{\pd}_{6}$ generate the algebra $\qbA{[0,1]}$ and the PBW monomials $\rF_{\pd}(\bsu)$ form an orthogonal basis of $\qbA{[0,1]}$.

\end{example}

\vskip 2em

\section{Adjoint operators $\Ep_{i,k}$ and $\Es_{i,k}$} \label{Sec: adjoint operators}
In this section, we define and investigate the operators $\Ep_{i,k}$ and $\Es_{i,k}$ on the bosonic extension $\qbA{}$ which are adjoint 
to the left and right multiplications of $f_{i,k}$ with respect to the bilinear form $\pair{ \cdot, \cdot }$ on $\qbA{}$. 
These operators $\Ep_{i,k}$ and $\Es_{i,k}$ can be viewed as a natural extension of the derivations $e'_{i}$ and $\es_{i}$ on $\Um$ (see~\eqref{Eq: ei ei*}).

\begin{lemma} \label{Lem: fip fip+1}
For any $(i,p)\in \hI$ and $m, n\in \Z_{\ge0}$, we have 
\begin{align*}
f_{i,p} f_{i, p+1} ^m f_{i, p} ^n  &= q_i^{2m} f_{i, p+1}^m f_{i,p}^{n+1} + (1-q_i^{2m}) f_{i, p+1}^{m-1} f_{i, p} ^n, \\
 f_{i, p} ^n f_{i, p-1}^m f_{i,p}  &= q_i^{2m} f_{i, p}^{n+1} f_{i,p-1}^{m} + (1-q_i^{2m}) f_{i, p}^{n} f_{i, p-1}^{m-1}.
\end{align*}
\end{lemma}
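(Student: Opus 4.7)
The plan is straightforward: both identities reduce by induction to the single defining relation from Definition~\ref{Def: extended qg}(c) applied with $i=j$, which reads $f_{i,p} f_{i,p+1} = q_i^{2} f_{i,p+1} f_{i,p} + (1-q_i^{2})$ since $c_{i,i}=2$. Since $f_{i,p}$ commutes with its own powers, the trailing factor $f_{i,p}^n$ is essentially a spectator and can be carried along throughout the argument without affecting the combinatorics.

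First I would establish the first identity by induction on $m \ge 0$. The case $m=0$ is trivial, and $m=1$ is immediate from the defining relation after multiplying on the right by $f_{i,p}^n$. For the induction step, I would apply the base relation to the leftmost pair, writing $f_{i,p} f_{i,p+1}^{m+1} f_{i,p}^n = \bigl(q_i^{2} f_{i,p+1} f_{i,p} + (1-q_i^{2})\bigr) f_{i,p+1}^m f_{i,p}^n$, and then invoke the induction hypothesis on the factor $f_{i,p} f_{i,p+1}^m f_{i,p}^n$ occurring inside the first summand. The resulting scalar coefficients combine via the elementary identity $q_i^{2}(1-q_i^{2m}) + (1-q_i^{2}) = 1 - q_i^{2(m+1)}$, producing the formula with $m+1$ in place of $m$.

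Rather than redo the induction for the second identity, I would deduce it from the first by applying the $\bfk$-algebra anti-automorphism $*$ introduced in Section~\ref{Sec: Bosonic ext}, which sends $f_{i,p}$ to $f_{i,-p}$ and reverses the order of products. Starting from the first identity with $-p$ in place of $p$ and applying $*$ to both sides swaps the left and right ends of each monomial and turns the index $-p+1$ into $p-1$; the scalar coefficients $q_i^{2m}$ and $1-q_i^{2m}$ are fixed by $*$, so one reads off the second identity directly.

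There is no substantive obstacle here: the argument is a one-step induction plus an appeal to the $*$-symmetry of the algebra, and the only computation worth double-checking is the coefficient recombination $q_i^{2}(1-q_i^{2m}) + (1-q_i^{2}) = 1 - q_i^{2(m+1)}$ used to close the induction.
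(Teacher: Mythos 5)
Your proof is correct and follows essentially the same approach as the paper: induction on $m$ using the $i=j$ case of Definition~\ref{Def: extended qg}(c), with the coefficient recombination $q_i^{2}(1-q_i^{2m}) + (1-q_i^{2}) = 1 - q_i^{2(m+1)}$ closing the step, and the $f_{i,p}^n$ factor carried along as a spectator (the paper simply assumes $n=0$ up front and multiplies back at the end, which is equivalent). The one cosmetic difference is that you obtain the second identity by applying the anti-automorphism $*$ to the first identity at $-p$, whereas the paper just remarks that the symmetric induction works; both routes are valid and equally short.
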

\begin{proof}
We may assume that $n=0$. We first focus on the first identity. If $m=1$, then it follows from the defining relations in Definition \ref{Def: extended qg}. 
For any $m>1$, we have 
\begin{align*}
f_{i,p} f_{i, p+1} ^m &= ( q_i^2 f_{i, p+1} f_{i,p} + (1-q_i^2) ) f_{i, p+1}^{m-1} \\ 
&=  q_i^2 f_{i, p+1} f_{i,p} f_{i, p+1}^{m-1} + (1-q_i^2) f_{i, p+1}^{m-1} \\
&= q_i^{2m} f_{i, p+1}^m f_{i,p} + (1-q_i^{2m}) f_{i, p+1}^{m-1},
\end{align*}
where the third equality follows from the induction hypothesis.

The second identity can be proved in the same manner as above.
\end{proof}

Recall that the operators $e_i'$ and $\es_i$ act on $\qbA{[k]}$ for any $k\in \Z$ (see Remark \ref{Rmk: Aq[k] and Aq[k+1]}).

\begin{lemma} \label{Lem: commute for f_ik u}
Let $ i\in I $ and $ k\in \Z$.
\bnum
\item \label{it: comm 1} Let $u \in \qbA{[k+1]}$ be a homogeneous element with $\es_i(u)=0$. Then we have 
$$
f_{i,k} u = q^{- (\wt(f_{i,k}), \wt(u))} u f_{i,k}.
$$
\item \label{it: comm 2} Let $v \in \qbA{[k-1]}$ be a homogeneous element with $e'_i(v)=0$. Then we have 
$$
 v f_{i,k} = q^{- (\wt(f_{i,k}), \wt(v))} f_{i,k} v .
$$
\ee
\end{lemma}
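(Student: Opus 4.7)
The plan is to deduce part~(ii) directly from Lemma~\ref{Lem: x f_i,k+1} and then obtain part~(i) from part~(ii) by applying the anti-automorphism $*$ of $\qbA{}$.

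For part~(ii), substituting $k \mapsto k-1$ in Lemma~\ref{Lem: x f_i,k+1} gives, for any $x \in \qbA{[k-1]}$,
\[
x f_{i,k} = q_i^{-\langle h_i,\wt(x)\rangle}\bigl( f_{i,k}\, x + q_i^{-1}(q_i^{-1}-q_i)\, e'_i(x) \bigr).
\]
Setting $x = v$, the assumption $e'_i(v) = 0$ eliminates the error term and yields $v f_{i,k} = q_i^{-\langle h_i,\wt(v)\rangle} f_{i,k}\, v$. It then remains to verify that $q_i^{-\langle h_i,\wt(v)\rangle} = q^{-(\wt(f_{i,k}),\wt(v))}$: on the left, $\wt(v)$ refers to the $\calU_q^-(\g)$-weight under $\qbA{[k-1]} \simeq \calU_q^-(\g)$, which by Remark~\ref{Rmk: Aq[k] and Aq[k+1]}(1) equals $(-1)^{k-1}$ times the $\qbA{}$-weight of \eqref{Eq: weight}; combined with $\wt(f_{i,k}) = (-1)^{k+1}\al_i$ and $(\al_i,\mu) = d_i\langle h_i,\mu\rangle$, a short computation confirms the equality.

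For part~(i), I exploit the $*$-symmetry. Under the identifications $\qbA{[p]} \simeq \calU_q^-(\g)$, the anti-automorphism $*$ on $\qbA{}$ corresponds to the standard $\bfk$-anti-involution on $\calU_q^-(\g)$ that fixes each $f_i$ (and interchanges $\qbA{[p]}$ with $\qbA{[-p]}$). Moreover, by comparing the Leibniz rules in \eqref{Eq: ei ei*} and using uniqueness, one has $\es_i = * \circ e'_i \circ *$ on $\calU_q^-(\g)$. Consequently, the hypothesis $\es_i(u) = 0$ on $u \in \qbA{[k+1]}$ translates into $e'_i(u^*) = 0$ on $u^* \in \qbA{[-k-1]}$. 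Applying part~(ii) with $k \mapsto -k$ and $v = u^*$ gives
\[
u^* f_{i,-k} = q^{-(\wt(f_{i,-k}),\wt(u^*))}\, f_{i,-k}\, u^*.
\]
Applying $*$ to both sides, and using that $*$ is anti-multiplicative, sends $f_{i,-k}$ to $f_{i,k}$, and preserves the $\qbA{}$-weight, together with $\wt(f_{i,-k}) = \wt(f_{i,k}) = (-1)^{k+1}\al_i$, one recovers the identity in part~(i).

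The main obstacle is purely bookkeeping: one must distinguish carefully between the $\qbA{}$-weight of \eqref{Eq: weight} and the $\calU_q^-(\g)$-weight used implicitly in Lemma~\ref{Lem: x f_i,k+1} (they differ by a sign $(-1)^k$ on $\qbA{[k]}$), otherwise the exponents appearing in the commutation scalars do not line up. Once this correspondence is sorted out, each part reduces to a one-line computation.
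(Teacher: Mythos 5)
Your argument is correct, but it travels a genuinely different route from the paper's. The paper proves part~(i) first, by embedding $f_{i,k}$ and the PBW root vectors $\rF_{\ell+2},\dots,\rF_{2\ell}$ (which span the $\es_i$-kernel inside $\qbA{[k+1]}$) into a locally reduced PBW datum on $\qbA{[k,k+1]}$ and invoking the Levendorskii--Soibelman commutation formula of Lemma~\ref{Lem: pbw comm}; it then gets part~(ii) by applying $*$. You go the other way: you prove part~(ii) directly from the $q$-boson identity of Lemma~\ref{Lem: x f_i,k+1} (whose error term vanishes precisely when $e_i'(v)=0$), and then deduce part~(i) by $*$-symmetry. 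Your route is more elementary — it needs only the Section~3 lemma and the observation that $\es_i = * \circ e_i' \circ *$, whereas the paper's proof leans on the PBW machinery of Section~5. The trade-off is that your argument forces you to keep straight the distinction between the $\qbA{}$-weight of~\eqref{Eq: weight} and the $\calU_q^-(\g)$-weight appearing implicitly in Lemma~\ref{Lem: x f_i,k+1}; you identify this pitfall explicitly, and the computation you indicate — using $\wt(v)_{\calU_q^-}=(-1)^{k-1}\wt(v)_{\qbA{}}$, $\wt(f_{i,k})=(-1)^{k+1}\al_i$, $(\al_i,\mu)=\sfd_i\langle h_i,\mu\rangle$, and the fact that $*$ preserves the $\qbA{}$-weight while sending $f_{i,-k}\mapsto f_{i,k}$ — does make the exponents match in both parts, so the proof is sound.
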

\begin{proof}

\eqref{it: comm 1} 
Let $N\seteq 2\ell$ and let $\bfi = (i_k)_{k\in [1,N]} \in I^{[1,N]}$ be a locally reduced sequence with $i_1 = i$. We set $\pd\seteq (\bfi, k)$ and consider the PBW root vectors 
$\rF_t \seteq \rF^{ \pd}_t$ of $\qbA{[k, k+1]}$ for $t\in [1, N]$ (see \eqref{Eq: PBW vectors}). Note that $\rF^{\bfi}_1 = f_{i,k}$ and $\rF^{\bfi}_{\ell+1} = f_{i,k+1}$. Since $u \in \qbA{[k+1]}$ with $\es_i(u)=0$, the element $u$ can be written as a linear combination of monomials of $\rF_{2\ell}$, $\rF_{2\ell-1}$, \ldots,  $\rF_{\ell+2}$.  Thus the assertion follows from Lemma \ref{Lem: pbw comm}.

\noindent
\eqref{it: comm 2} Recall the anti $\bR$-automorphism $*: \qbA{} \buildrel \sim \over \longrightarrow \qbA{}$ in Section \ref{Sec: Bosonic ext}. Since $\wt(x) = \wt(*(x))$ for any homogeneous element $x$, the assertion follows by applying $*$ to~\eqref{it: comm 1}.
\end{proof}

We now define the adjoint operators on $\qbA{}$ as follows.

\begin{definition} \label{def: Adjoint operator}
For any $(i,k)\in \hI$, we define $\bR$-linear maps 
\begin{align*}
\Ep_{i,k}: \qbA{} \longrightarrow \qbA{} \quad \text{ and }\quad \Es_{i,k}: \qbA{} \longrightarrow \qbA{},
\end{align*}
by 
\begin{align*} 
\pair{ \Ep_{i,k} (x), y } = \pair{ x, f_{i,k}y } \quad \text{ and }\quad \pair{  \Es_{i,k} (x), y } = \pair{  x, yf_{i,k} }
\end{align*}
for any $x,y \in \qbA{}$. 
\end{definition}

Lemma  \ref{Lem: properties of (,)} and Corollary \ref{Cor: orthonormal_reduced_Aq} say that $\Ep_{i,k}$ and $\Es_{i,k}$ are well-defined $\bR$-linear maps.
It is easy to see that 
$$
\Es_{i,p} = * \circ  \Ep_{i,-p} \circ  *
$$
for any $(i,p)\in \hI$.

\begin{theorem} \label{Thm: E_ik, f_ik} 
Let $i\in I$, $k \in \Z$, and let 
$$
x \in \qbA{> k+1}, \quad y \in \qbA{[k+1]},\quad  z \in \qbA{[k]}, \quad  w \in \qbA{<k}
$$ 
be homogeneous elements. Then we have 
\bnum
\item \label{it: left f} $f_{i,k}(xyzw) = q_i^{ (-1)^k \langle  h_i, \wt(xy) \rangle-1 } \left(    (q_i^{-1}-q_i) x (\es_{i} (y))z w + q_i x y (f_{i,k} z)w  \right),$
\item \label{it: left E} $\Ep_{i,k}(xyzw) = q_i^{ (-1)^k \langle  h_{i}, \wt(xy) \rangle } \left( (q_i^{-1}-q_i) x y ( e_{i}'(z)) w + q_i x (yf_{i,k+1}) zw  \right),$
\item \label{it: right f} $(xyzw) f_{i,k+1} = q_i^{ (-1)^{k+1} \langle h_{i}, \wt(zw) \rangle-1 } \left( (q_i^{-1}-q_i) x y (e_{i}'(z))w + q_i x (y f_{i,k+1})z w \right),$
\item \label{it: right E} $\Es_{i,k+1}(xyzw) = q_i^{ (-1)^{k+1} \langle h_{i}, \wt(zw) \rangle } \left(  (q_i^{-1}-q_i) x (\es_{i} (y))z w + q_i x y (f_{i,k} z)w  \right).$
\ee
\end{theorem}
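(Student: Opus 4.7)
The overall strategy is to establish (i) and (iii) by direct commutation through the standard decomposition $xyzw$, and then to derive (ii) and (iv) as their adjoints with respect to $\pair{\ ,\ }$, exploiting the defining properties of $\Ep_{i,k}$ and $\Es_{i,k+1}$.

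For (i), I would move $f_{i,k}$ from the left past $x$, then past $y$, and finally let it multiply $z$ while $w$ sits passively on the right. Because every generator appearing in $x\in\qbA{> k+1}$ has second index strictly greater than $k+1$, only Definition \ref{Def: extended qg}(b) applies, and iteration yields the clean braiding $f_{i,k}\,x = q^{-(\wt(f_{i,k}),\wt(x))}\,x\,f_{i,k}$. To move $f_{i,k}$ past $y\in\qbA{[k+1]}$ I would apply the anti-automorphism $*$ to Lemma \ref{Lem: x f_i,k+1} (using that $*$ intertwines $e_i'$ and $\es_i$ on $\calU_q^-(\g)$ and exchanges $f_{i,k}\leftrightarrow f_{i,-k}$), obtaining the mirror identity
\begin{align*}
f_{i,k}\,y \;=\; q_i^{(-1)^k\langle h_i,\wt(y)\rangle}\Bigl(y\,f_{i,k} + q_i^{-1}(q_i^{-1}-q_i)\,\es_i(y)\Bigr) \qquad (y\in\qbA{[k+1]}).
\end{align*}
Combining the two scalar factors gives $q_i^{(-1)^k\langle h_i,\wt(xy)\rangle}$, and distributing over $zw$ yields the RHS of (i). Statement (iii) is proved symmetrically, by moving $f_{i,k+1}$ from the right: Definition \ref{Def: extended qg}(b) handles the passage past $w\in\qbA{<k}$, while Lemma \ref{Lem: x f_i,k+1} (applied directly this time) handles $z\in\qbA{[k]}$ and produces the $e_i'(z)$-correction term.

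For (ii), I would verify $\pair{R,t}=\pair{xyzw,\,f_{i,k}t}$ for every $t\in\qbA{}$, where $R$ denotes the claimed RHS of (ii). By Lemma \ref{Lem: qA decomp} it suffices to test on standard decompositions $t = x'y'z'w'$, and Lemma \ref{Lem: properties of (,)} factorises every pairing over the four slots. Expanding $f_{i,k}t$ via (i) and matching $\qbA{}$-weights slot by slot, both sides become sums of products $\pair{x,x'}\pair{w,w'}$ times a two-slot expression, and the identity reduces to the following two \emph{slot identities} on $\qbA{[k]}\simeq \calU_q^-(\g)$ and $\qbA{[k+1]}\simeq \calU_q^-(\g)$ respectively:
\begin{align*}
(q_i^{-1}-q_i)\pair{e_i'(z),\,z'} = \pair{z,\,f_{i,k} z'}, \qquad
\pair{y\,f_{i,k+1},\,y'} = (q_i^{-1}-q_i)\pair{y,\,\es_i(y')}.
\end{align*}
Each of these is a transcription of Kashiwara's adjointness $(f_i u,v)_K = (u,e_i'(v))_K$ and $(u f_i,v)_K = (u,\es_i(v))_K$ via \eqref{eq: Kform Lform} and Definition \ref{def: ( )}; the choice $\cC_i = q_i^{1/2}(q_i^{-1}-q_i)$ is precisely what makes the conversion constants collapse to $(q_i^{-1}-q_i)$. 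Statement (iv) follows in the same way using (iii) in place of (i), or equivalently by applying $*$ to (ii) via the relation $\Es_{i,p} = *\circ \Ep_{i,-p}\circ *$ together with the $*$-invariance $\pair{*u,*v}=\pair{u,v}$ from Lemma \ref{Lem: properties of (,)}.

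The main obstacle is careful bookkeeping of sign conventions: the $\qbA{}$-weight $\wt(\cdot)$ and the $\calU_q^-(\g)$-weight under the identification in Lemma \ref{Lem: id A and U} differ by $(-1)^k$ on $\qbA{[k]}$ (Remark \ref{Rmk: Aq[k] and Aq[k+1]}), and the various $\cC_i$-powers issued by Definition \ref{def: ( )} must be tracked so that the factor $(q_i^{-1}-q_i)$ in the slot identities comes out exactly. Once these conventions are fixed, the rest of the argument is routine.
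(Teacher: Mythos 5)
Your proof is correct and follows the same overall architecture as the paper's proof (establish (i) and (iii) by moving $f_{i,k}$, $f_{i,k+1}$ through the four-slot decomposition, then obtain (ii) and (iv) by adjointness), but the internal computations differ in two worthwhile ways. For part (i), instead of writing $y=y_0 f_{i,k+1}^m$ with $\es_i(y_0)=0$ and invoking Lemma \ref{Lem: fip fip+1} as the paper does, you apply $*$ to Lemma \ref{Lem: x f_i,k+1} to get the mirror commutation relation $f_{i,k}\,y = q_i^{(-1)^k\langle h_i,\wt(y)\rangle}\bigl(y\,f_{i,k} + q_i^{-1}(q_i^{-1}-q_i)\,\es_i(y)\bigr)$ for $y\in\qbA{[k+1]}$, which handles the $y$-slot in one stroke; this is cleaner and I checked that the $*$-twist indeed converts $e'_i$ to $\es_i$ and keeps the scalar (the $\wt$ in Lemma \ref{Lem: x f_i,k+1} is the $\calU_q^-(\g)$-weight, and $(-1)^{k+1}$-twisting it gives your exponent, which agrees with the paper's after factoring $q_i^{-1}$). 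For part (ii), the paper fixes orthogonal bases adapted to $m,t$-decompositions, expands $\pair{\Ep_{i,k}(xyzw), x'y'z'w'}$ via (i), and solves for the two unknown coefficients $A,B$ by pairing against two specific test vectors; you instead verify the defining adjoint identity $\pair{R,t}=\pair{xyzw,f_{i,k}t}$ directly, reducing it (after sorting by weight sectors in the $(y',z')$-slots) to the two slot identities, which are exactly Kashiwara's adjointness $(f_iu,v)_K = (u,e'_iv)_K$ and $(uf_i,v)_K=(u,\es_iv)_K$ transported through \eqref{eq: Kform Lform} and Definition \ref{def: ( )}. I verified that $(q_i^{-1}-q_i)\cC_i^{-2}=(1-q_i^2)^{-1}$, so the conversion constant indeed collapses as you claim. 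The slot-identity route is more conceptual and avoids the coefficient-solving step, at the cost of needing to separate the weight sectors; the paper's route is more computational but self-contained. Both are valid; what yours buys is a cleaner appeal to the Kashiwara pairing's universal property rather than re-deriving it from explicit PBW norms.
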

\begin{proof}
\eqref{it: left f} As $ \calU_q^-(\g) = \sum_{t=0}^\infty \calU_i f_i^t$, where $\calU_i$ is the subalgebra given by \eqref{eq: Ui},  we may assume that $y = y_0 f_{i, k+1}^m$ for some $y_0 \in \qbA{[k+1]}$ with $\es_i(y_0)=0$ and $m\in \Z_{\ge0}$. 
By \eqref{Eq: es e'i for f_i^m}, \eqref{Eq: defining rel},  Lemma \ref{Lem: fip fip+1} and Lemma \ref{Lem: commute for f_ik u}, we have 
\begin{align*}
f_{i,k} (xyzw) &= f_{i,k} (x (y_0 f_{i, k+1}^m) zw) \allowdisplaybreaks\\
&=  q_i^{ (-1)^k \langle h_{i}, \wt(xy_0) \rangle}  x  y_0  (f_{i,k}f_{i, k+1}^m) zw \allowdisplaybreaks\\
&= q_i^{ (-1)^k \langle h_{i}, \wt(xy_0) \rangle}  x  y_0  \left(
(1-q_i^{2m})f_{i, k+1}^{m-1} + q_i^{2m} f_{i, k+1}^m f_{i,k}
\right) zw \allowdisplaybreaks\\
&= q_i^{ (-1)^k \langle h_{i}, \wt(xy_0) \rangle}  \left(
(1-q_i^{2m}) x  (y_0  f_{i, k+1}^{m-1} ) zw  
 + q_i^{2m} x  y_0 f_{i, k+1}^m (f_{i,k}z)w
 \right) \allowdisplaybreaks\\
&= q_i^{ (-1)^k \langle h_{i}, \wt(xy_0) \rangle}  \left(
 q_i^{2m-1}(q_i^{-1}-q_i) x  \es_i(y)  zw  
+ q_i^{2m} x  y (f_{i,k}z)w
\right) \allowdisplaybreaks\\
&= q_i^{ (-1)^k \langle h_{i}, \wt(xy) \rangle - 1}  \left(
 (q_i^{-1}-q_i) x  \es_i(y)  zw  
+ q_i x  y (f_{i,k}z)w
\right),
\end{align*}
where the last equality follows from that 
$( \wt(f_{i,k}), \wt(xy) ) =  ( \wt(f_{i,k}), \wt(xy_0) ) - m(\al_i, \al_i) $.

\noindent
\eqref{it: left E} We choose orthogonal bases 
$$
B_1 \subset \qbA{>k+1}, \quad B_2 \subset \qbA{[k+1]},\quad B_3 \subset \qbA{[k]}, \quad B_4 \subset \qbA{<k}
$$ 
with respect to the bilinear form $\pair{\ , \ }$. We assume that $B_2$ and $B_3$ satisfy the following conditions:
\bna
\item Any element $y \in B_2$ has the form $y=y_0 f_{i,k+1}^m$ for some $m\in \Z_{\ge0}$ and $y_0 \in \qbA{[k+1]}$ with $ \es_i(y_0)=0$ and $\pair{y_0 f_{i,k+1}^m, y_0' f_{i,k+1}^{m'} } = \pair{ y_0, y_0'} \pair{ f_{i,k+1}^m, f_{i,k+1}^{m'} }  $.
\item Any element $z \in B_3$ has the form $z= f_{i,k}^t z_0$ for some $t\in \Z_{\ge0}$ and $z_0 \in \qbA{[k]}$ with $ e_i'(z_0)=0$ and $ \pair{f_{i,k+1}^t z_0 ,  f_{i,k+1}^{t'} z_0' } = \pair{ z_0, z_0'} \pair{ f_{i,k+1}^t, f_{i,k+1}^{t'} }  $.
\ee
Note that $B_2$ (resp.\ $B_3$) can be obtained by taking the PBW basis of $\qbA{[k+1]}$ (resp.\ $\qbA{[k]}$) associated with a reduced expression $(i_1, i_2, \ldots, i_\ell) \in R(w_\circ)$ with $i_1 = i$ (resp.\ $i_\ell = i^*$) (see Theorem \ref{Thm: orthonormal_reduced}). 
By Lemma \ref{Lem: properties of (,)}, the bases $B_1$, $B_2$, $B_3$ and $B_4$ are mutually orthogonal, i.e., the product $ B_1 \times B_2 \times B_3 \times B_4 $ becomes an orthogonal basis of $\qbA{}$.

We take $x,x' \in B_1$, $y,y' \in B_2$, $z,z' \in B_3$, and $w,w' \in B_4$ and write 
$$
y = y_0 f_{i,k+1}^m, \quad y' = y_0' f_{i,k+1}^{m'}, \quad z =  f_{i,k}^t z_0, \quad z' =  f_{i,k}^{t'} z_0' 
$$ 
for some $y_0, y_0' \in \qbA{[k+1]}$ with $ \es_i(y_0)= \es_i(y_0')=0$ and  
$z_0, z_0' \in \qbA{[k]}$ with $ e_i'(z_0)= e_i'(z_0')=0$. 
By~\eqref{it: left f}, we have 
\begin{equation} \label{Eq: E'ik}
\begin{aligned}
\pair{ \Ep_{i,k} & (xyzw),  x'y'z'w' } = \pair{ xyzw,  f_{i,k}( x'y'z'w')  } \\
&= q_i^{ (-1)^k \langle h_{i}, \wt(x'y')  \rangle -1 }
\pair{ xyzw, (q_i^{-1} -q_i )x' \es_i(y')z'w' + q_i x'y'(f_{i,k}z')w' } \\
&= q_i^{ (-1)^k \langle h_{i}, \wt(x'y')  \rangle -1 }  (q_i^{-1} -q_i ) \pair{x,x'} \pair{y, \es_i(y')} \pair{z,z'} \pair{w,w'} \\
&\quad  + q_i^{ (-1)^k \langle h_{i}, \wt(x'y') \rangle }   \pair{x,x'} \pair{y, y'} \pair{z, f_{i,k}z'} \pair{w,w'} \\
&= q_i^{ (-1)^k \langle h_{i}, \wt(x'y')  \rangle -1 }  (q_i^{-2m'+1} -q_i ) \pair{x,x'} \pair{y_0, y'_0} \pair{z,z'} \pair{w,w'} \pair{ f_{i,k+1}^m, f_{i,k+1}^{m'-1} } \\
&\quad  + q_i^{ (-1)^k \langle h_{i}, \wt(x'y') \rangle }   \pair{x,x'} \pair{y, y'} \pair{z_0, z_0'} \pair{w,w'} \pair{f_{i,k}^t, f_{i,k}^{t'+1}}. 
\end{aligned}
\end{equation}
Thus, since the product $ B_1 \times B_2 \times B_3 \times B_4 $ is an orthogonal basis of $\qbA{}$, we can write  
$$
\Ep_{i,k}(xyzw) = A x (y_0 f_{i,k+1}^{m+1}) zw + B xy(f_{i,k}^{t-1}z_0)w,
$$
for some $A,B \in \bR$. Applying $ \pair{ - ,\  x (y_0 f_{i,k+1}^{m+1}) zw } $ to the above identity, it follows from Theorem \ref{Thm: orthonormal_reduced} and \eqref{Eq: E'ik} that 
\begin{align*}
A &= q_i^{ (-1)^k \langle h_{i}, \wt(x (y_0 f_{i,k+1}^{m+1}) )  \rangle -1 }  (q_i^{-2m-1} -q_i )
\frac{   \pair{ f_{i,k+1}^m, f_{i,k+1}^{m} } }
{  \pair{ f_{i,k+1}^{m+1}, f_{i,k+1}^{m+1} }  } \allowdisplaybreaks\\
&= q_i^{ (-1)^k \langle h_{i}, \wt(x y )  \rangle +1 }  (q_i^{-2m-1} -q_i )
\frac{   q_i^m }
{  (q_i^{-1}-q_i) [m+1]_i  } \allowdisplaybreaks\\
&= q_i^{ (-1)^k \langle h_{i}, \wt(x y )  \rangle +1 } .
\end{align*}
In a similar manner, we have 
\begin{align*}
B &= q_i^{(-1)^k \langle h_{i}, \wt(xy) \rangle } \frac{\pair{ f_{i,k}^t, f_{i,k}^{t}}}{\pair{ f_{i,k}^{t-1}, f_{i,k}^{t-1}}} \allowdisplaybreaks \\
&= q_i^{ (-1)^k \langle h_{i}, \wt(xy) \rangle } q_i^{-t+1} (q_i^{-1} - q_i) [t]_i.
\end{align*}
Therefore, we obtain
\begin{align*}
\Ep_{i,k}(xyzw) &= q_i^{ (-1)^k \langle h_{i}, \wt(x y ) \rangle } \left( q_i x (y_0 f_{i,k+1}^{m+1}) zw + 
 q_i^{-t+1} (q_i^{-1} - q_i) [t]_i  xy(f_{i,k}^{t-1}z_0)w \right) \\
 &= q_i^{ (-1)^k \langle h_{i}, \wt(x y ) \rangle  } \left( q_i x (y f_{i,k+1}) zw + 
  (q_i^{-1} - q_i)  xy(e_i'(z))w \right),
\end{align*}
where the last equality follows from \eqref{Eq: es e'i for f_i^m}.

\smallskip

\noindent
\eqref{it: right f} and~\eqref{it: right E} follow by applying the anti-automorphism $*$ to~\eqref{it: left f} and~\eqref{it: left E}.
\end{proof}

\begin{corollary} \label{cor: Ep and f}
Let $i\in I$ and $k \in \Z$. 
For any homogeneous element $u \in \qbA{}$, we have
\begin{align*}
\Ep_{i,k} (u) &= q_i^{ (-1)^k \langle h_{i}, \wt(u)\rangle + 1 } u  f_{i, k+1}, \\
\Es_{i,k} (u) &= q_i^{ (-1)^k \langle h_{i}, \wt(u)\rangle + 1} f_{i, k-1} u.
\end{align*}
\end{corollary}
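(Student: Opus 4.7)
The plan is to deduce this corollary directly from Theorem \ref{Thm: E_ik, f_ik}. By Lemma \ref{Lem: qA decomp}, every homogeneous element $u \in \qbA{}$ can be written as a sum of homogeneous products of the form $xyzw$ with $x \in \qbA{>k+1}$, $y \in \qbA{[k+1]}$, $z \in \qbA{[k]}$, $w \in \qbA{<k}$. By linearity, it therefore suffices to verify each identity on such a product. For the second identity one uses instead the shifted decomposition $x\in \qbA{>k}$, $y\in \qbA{[k]}$, $z\in \qbA{[k-1]}$, $w\in \qbA{<k-1}$.

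For the first identity, I would compare parts \eqref{it: left E} and \eqref{it: right f} of Theorem \ref{Thm: E_ik, f_ik}. The crucial observation is that both formulas contain the \emph{same} bracketed expression $(q_i^{-1}-q_i)xy(e_i'(z))w + q_i x(yf_{i,k+1})zw$ (up to the same factor of $q_i$ in front), so dividing one by the other eliminates all the complicated internal structure and yields
\begin{equation*}
\Ep_{i,k}(xyzw) = q_i^{(-1)^k\langle h_i,\wt(xy)\rangle - (-1)^{k+1}\langle h_i,\wt(zw)\rangle + 1}\,(xyzw)\,f_{i,k+1}.
\end{equation*}
Since $-(-1)^{k+1} = (-1)^k$ and $\wt(xyzw) = \wt(xy) + \wt(zw)$, the exponent collapses to $(-1)^k\langle h_i,\wt(u)\rangle + 1$, as desired.

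The second identity proceeds by the symmetric comparison of parts \eqref{it: left f} and \eqref{it: right E} of Theorem \ref{Thm: E_ik, f_ik} (after shifting $k \mapsto k-1$ in both, so that the indices align with $\Es_{i,k}$ and $f_{i,k-1}$). Again the shared bracketed factor $(q_i^{-1}-q_i)x(\es_i(y))zw + q_i xy(f_{i,k-1}z)w$ cancels in the ratio, producing
\begin{equation*}
\Es_{i,k}(xyzw) = q_i^{(-1)^k\langle h_i,\wt(zw)\rangle - (-1)^{k-1}\langle h_i,\wt(xy)\rangle + 1}\,f_{i,k-1}\,(xyzw),
\end{equation*}
which simplifies via the same sign identity to $q_i^{(-1)^k\langle h_i,\wt(u)\rangle + 1} f_{i,k-1}\,u$. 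Alternatively, one can obtain the $\Es_{i,k}$ identity from the $\Ep_{i,k}$ identity by conjugating with the anti-automorphism $*$, using $\Es_{i,p} = *\circ \Ep_{i,-p}\circ *$ together with $\wt(*(u)) = \wt(u)$.

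There is essentially no obstacle here: the work is entirely sign-tracking, and the corollary is really a statement that the adjoint operators $\Ep_{i,k}$ and $\Es_{i,k}$ differ from right multiplication by $f_{i,k+1}$ and left multiplication by $f_{i,k-1}$, respectively, only by a weight-dependent scalar. The only step that needs mild care is confirming that the $q_i$-power in front and the hidden $q_i$-factor inside the parentheses of parts \eqref{it: left f}--\eqref{it: right E} match exactly when one divides, so that the bracketed quantities truly cancel.
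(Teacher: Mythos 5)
Your proposal is correct and matches the paper's own proof: the paper also derives the first identity by comparing parts (2) and (3) of Theorem~\ref{Thm: E_ik, f_ik}, observing that both contain the identical bracketed factor $X$, and dispenses with the second identity via ``In a similar manner.'' Your two alternative routes for the second identity (comparing parts (1) and (4), or conjugating by $*$) are both valid and fill in what the paper leaves implicit.
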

\begin{proof}
Let $x \in \qbA{>k+1}$, $y \in \qbA{[k+1]}$, $z \in \qbA{[k]}$, and $w \in \qbA{<k}$ be homogeneous elements. We set 
$$ X \seteq (q_i^{-1}-q_i) x y ( e_{i}'(z)) w + q_i x (yf_{i,k+1}) zw. $$ 
By Theorem \ref{Thm: E_ik, f_ik}, we have 
\begin{align*}
\Ep_{i,k} (xyzw) &=  q_i^{ (-1)^k \langle  h_{i}, \wt(xy) \rangle } X \\
& = q_i^{ (-1)^k \langle  h_{i}, \wt(xyzw) \rangle + 1 } q_i^{ (-1)^{k+1} \langle  h_{i}, \wt(zw) \rangle - 1 }  X \\
& = q_i^{ (-1)^k \langle  h_{i}, \wt(xyzw) \rangle + 1 } (xyzw) f_{i, k+1},
\end{align*} 
which gives the first identity. 
In a similar manner, one can prove the second identity. 
\end{proof}

\begin{corollary} \label{cor: q to qinverse}
We have the following relations in $\End(\hcalA):$
\bnum
\item $\displaystyle\sum^{1-c_{i,j}}_{k=0} (-1)^k \left[\begin{matrix} 1-c_{i,j} \\ k \end{matrix} \right]_i  E_{i,p}^{\prime 1-c_{i,j}-k} \Ep_{j,p} E_{i,p}^{\prime k} = 0  $ for $i \ne j$.
\item $\Ep_{i,m}\Ep_{j,p} =q_i^{(-1)^{p-m}c_{i,j}}\Ep_{j,p}\Ep_{i,m}$ for $p>m+1$. 
\item $\Ep_{i,k}\Ep_{j,k+1} =q_i^{-c_{i,j}}\Ep_{i,k+1}\Ep_{i,k} + \delta_{i,j}(1-q_i^{-2})$. 
\item $\Ep_{i,k}\Es_{j,l} = \Es_{j,l}\Ep_{i,k}$.
\ee
\end{corollary}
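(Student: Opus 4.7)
The strategy is to reduce every identity to a known relation in $\qbA{}$ by using the explicit formula in Corollary~\ref{cor: Ep and f}. That corollary shows that, on homogeneous elements, $\Ep_{i,k}$ is the right multiplication by $f_{i,k+1}$ twisted by a scalar depending only on $\wt(u)$, and $\Es_{i,k}$ is the analogous left multiplication by $f_{i,k-1}$. Evaluating both sides of (1)--(4) at an arbitrary homogeneous $u$ will therefore always yield $u$ times a product of $f$'s (on the right or the left), weighted by an explicit $q$-factor, and the desired identities will follow by matching these $q$-factors with the relations in Definition~\ref{Def: extended qg} (translated into exponents of $q^{-1}$).

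The easiest is (4). Applying the two composites to a homogeneous $u$ gives, by Corollary~\ref{cor: Ep and f},
\[
\Es_{j,l}\Ep_{i,k}(u) = A\cdot f_{j,l-1}\,u\,f_{i,k+1},
\qquad
\Ep_{i,k}\Es_{j,l}(u) = B\cdot f_{j,l-1}\,u\,f_{i,k+1},
\]
so they agree as operators on $\qbA{}$ iff $A=B$. A direct computation of both scalars, using $\wt(u\,f_{i,k+1}) = \wt(u) + (-1)^k\al_i$ and $\wt(f_{j,l-1}\,u) = \wt(u)+(-1)^l\al_j$, reduces the equality $A=B$ to the symmetry $q_j^{\langle h_j,\al_i\rangle}=q^{(\al_j,\al_i)}=q^{(\al_i,\al_j)}=q_i^{\langle h_i,\al_j\rangle}$ of the bilinear form.

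For (1)--(3) I will similarly push all the operators past $u$. Each $\Ep_{i,p}^{\,n}$ produces a scalar times $u\,f_{i,p+1}^{\,n}$, and the scalar is a manifestly computable product of $q_i$-powers of $\langle h_i,\wt(u)\rangle$ plus a combinatorial correction arising only from the running weight shifts of the intermediate factors. The key bookkeeping point—which makes the quantum Serre identity (1) work uniformly in $k$—is that in the string $E_{i,p}^{\prime\,1-c_{i,j}-k}\Ep_{j,p}E_{i,p}^{\prime\,k}$ the $k$-dependent part of the scalar vanishes after simplification, because the exponents $k$ and $1-c_{i,j}-k$ combine into $(1-c_{i,j})^2 + (1-c_{i,j})c_{i,j}$. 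Thus the alternating $q$-binomial sum of Serre in $\qbA{}$ (applied to $u\,f_{i,p+1}^\bullet f_{j,p+1} f_{i,p+1}^\bullet$ on the right) gives $0$, and (1) follows. For (2), after moving everything to the right of $u$ one invokes the defining relation $f_{i,m+1}f_{j,p+1} = q_i^{(-1)^{p-m+1}c_{i,j}}f_{j,p+1}f_{i,m+1}$ (valid since $p>m+1$); the factor $q_i^{(-1)^{p-m}c_{i,j}}$ in (2) is exactly the inverse of this, which is the reason the exponent flips sign (``$q\to q^{-1}$'' phenomenon). The same argument works for (3), using instead the third relation in Definition~\ref{Def: extended qg}(c) and noting that the identity term $\delta_{i,j}(1-q_i^2)$ acquires an overall scalar that simplifies to $\delta_{i,j}(1-q_i^{-2})$ once the normalizing $q$-twists of $\Ep_{i,k}\Ep_{j,k+1}$ are taken into account.

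The main obstacle is purely computational: verifying that all the weight-dependent scalars cancel cleanly, in particular that in (3) the inhomogeneous term $(1-q_i^2)$ from $\qbA{}$ turns into $(1-q_i^{-2})$ after the twists. This will follow because $\Ep_{i,k}\Ep_{j,k+1}$ contributes a scalar with exponent $+2$ in $q_i$ relative to $\Ep_{j,k+1}\Ep_{i,k}$, so multiplying $(1-q_i^2)$ by $q_i^{-2}$ produces the expected $(1-q_i^{-2})$. No new ideas beyond Corollary~\ref{cor: Ep and f} and the defining relations of $\qbA{}$ are needed; the whole proof is essentially a translation dictionary.
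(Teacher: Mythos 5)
Your strategy is sound and the computations check out: on a homogeneous $u$, Corollary~\ref{cor: Ep and f} converts each $\Ep$ (resp.\ $\Es$) into a weight-twisted right (resp.\ left) multiplication, and matching the $q$-twists against the defining relations of $\qbA{}$ yields all four identities. The ``$q\to q^{-1}$'' phenomenon in (2)--(3) is exactly as you describe, and for (1) your observation that the $k$-dependent part of the total scalar drops out is correct; the one ingredient your sketch leaves implicit is that this cancellation also uses $q_i^{c_{i,j}}=q_j^{c_{j,i}}$ to fuse the $q_j^{kc_{j,i}}$ contribution from the middle $\Ep_{j,p}$ with the $q_i^{(n-k)c_{i,j}}$ contribution from the outer string, before the exponents combine to $(1-c_{i,j})^2+(1-c_{i,j})c_{i,j}$. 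Also, in (3) the inhomogeneous term picks up a minus sign when you move it across (i.e.\ $-q_i^{-2}(1-q_i^2)=1-q_i^{-2}$), which your wording glosses over; the endpoint is right.

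There is, however, a strictly shorter and more conceptual route which avoids Corollary~\ref{cor: Ep and f} entirely. From Definition~\ref{def: Adjoint operator} and an easy iteration, $\Ep_{i,p}^{k}\Ep_{j,p}\Ep_{i,p}^{n-k}$ is the adjoint, with respect to $\pair{\,,\,}$, of left multiplication by $f_{i,p}^{n-k}f_{j,p}f_{i,p}^{k}$; likewise $\Ep_{i,m}\Ep_{j,p}$ is adjoint to $L_{f_{j,p}f_{i,m}}$ and $\Es_{j,l}$ to right multiplication $R_{f_{j,l}}$. Since the bilinear form is non-degenerate (Lemma~\ref{Lem: properties of (,)}), every relation among $L_{f_{i,p}}$'s and $R_{f_{j,l}}$'s in $\End(\qbA{})$ passes immediately to the adjoints. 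Relations (1)--(3) then follow from the defining relations of $\qbA{}$ by a one-line rearrangement (for (1), also a reindexing $k\mapsto n-k$), and (4) follows from the tautological commutativity $L_{f_{i,k}}R_{f_{j,l}}=R_{f_{j,l}}L_{f_{i,k}}$. This sidesteps all of the weight-scalar bookkeeping that you flag as the main obstacle; you may find it a cleaner way to organize the proof, though your version via Corollary~\ref{cor: Ep and f} is correct.
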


\begin{remark}
The first three relations in Corollary~\ref{cor: q to qinverse} can be obtained from 
the defining relations in Definition~\ref{Def: extended qg} by replacing $q^{1/2}$ with $q^{-1/2}$. These coincide with the defining relations in \cite[Thoerem 7.3]{HL15}. The operators $\{ \Es_{i,k} \}_{(i,k)\in \hI}$ satisfies the same relations in Definition~\ref{Def: extended qg}. 
\end{remark}

The following tells us that the operators $E_{i,p}$ and $\Es_{i,p}$ have analogues of the $q$-derivation properties \eqref{Eq: ei ei*} of $e'_i$ and $\es_i$.
\begin{corollary} \label{cor: prime operation}
Let $i\in I$ and $k \in \Z$. 
\bnum
\item \label{it: prime 1} For any homogeneous elements $ u \in \qbA{} $, $v \in \qbA{[ k]}$ and $w \in \qbA{<k}$, we have 
\begin{align*}
\Ep_{i,k}(uvw) = \Ep_{i,k}(u) vw + q_i^{ (-1)^k \langle h_{i}, \wt(u) \rangle } (q_i^{-1}-q_i) ue_{i}' (v)w.
\end{align*}  
\item \label{it: prime 2} For any homogeneous elements $ u \in \qbA{>k} $, $v \in \qbA{[k]}$ and $w \in \qbA{}$, we have 
\begin{align*}
\Es_{i,k}(uvw) =  uv\Es_{i,k}(w) + q_i^{ (-1)^k \langle h_{i}, \wt(w)\rangle } (q_i^{-1}-q_i) u\es_{i} (v)w.
\end{align*}
\ee
\end{corollary}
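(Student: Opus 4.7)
The strategy is to prove (i) by a direct computation based on Corollary \ref{cor: Ep and f}, and then to deduce (ii) from (i) via the anti-automorphism $*$. For (i), Corollary \ref{cor: Ep and f} converts the adjoint operator into a right multiplication,
$$
\Ep_{i,k}(uvw) = q_i^{(-1)^k \langle h_i, \wt(uvw)\rangle + 1}\, uvw\, f_{i,k+1},
$$
so the task reduces to pushing $f_{i,k+1}$ leftward through $w$ and then through $v$. Since $w \in \qbA{<k}$ is built from homogeneous pieces in $\qbA{[j]}$ with $j < k$ at index distance $k+1-j \ge 2$, Definition \ref{Def: extended qg}(b) applied piecewise yields the pure $q$-commutation
$$
w\, f_{i,k+1} = q_i^{-(-1)^k \langle h_i, \wt(w)\rangle}\, f_{i,k+1}\, w.
$$
For the subsequent push past $v \in \qbA{[k]}$, Lemma \ref{Lem: x f_i,k+1} gives
$$
v\, f_{i,k+1} = q_i^{-(-1)^k \langle h_i, \wt(v)\rangle}\Bigl(f_{i,k+1}\, v + q_i^{-1}(q_i^{-1}-q_i)\, e'_i(v)\Bigr),
$$
which is exactly where the $e'_i$ correction term enters. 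Combining these two commutations, the scalar prefactors collapse to $q_i^{-(-1)^k \langle h_i, \wt(vw)\rangle}$ and cancel the matching portion of the prefactor in Corollary \ref{cor: Ep and f}, leaving one principal term and one correction term. Recognizing the principal term as $\Ep_{i,k}(u)\, vw$ through a second application of Corollary \ref{cor: Ep and f} yields (i).

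For (ii), apply the anti-automorphism $*$ to the identity of (i). Recall from Section \ref{Sec: adjoint operators} that $\Es_{i,p} = * \circ \Ep_{i,-p} \circ *$; that $*$ is an anti-automorphism sending $\qbA{>k}$, $\qbA{[k]}$, $\qbA{<k}$ respectively to $\qbA{<-k}$, $\qbA{[-k]}$, $\qbA{>-k}$ and, under the identifications of Lemma \ref{Lem: id A and U}, restricting on each $\qbA{[k]}$ to the standard $*$-involution of $\calU_q^-(\g)$, hence intertwining $e'_i$ with $\es_i$; and that $\wt(x^*) = \wt(x)$ because $(-1)^{-p+1} = (-1)^{p+1}$. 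Performing the relabeling $(u,v,w) \mapsto (w^*, v^*, u^*)$ and then $k \mapsto -k$ (using $(-1)^{-k} = (-1)^k$) converts the identity of (i) directly into that of (ii).

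The principal delicacy, which I expect to be the main obstacle, is bookkeeping the $q_i$-exponents carrying the parity sign $(-1)^k$ inherited from the weight convention $\wt(f_{i,p}) = (-1)^{p+1}\al_i$: one must verify that the $(-1)^k$-twisted scalars arising from the two pushes combine exactly into $q_i^{-(-1)^k \langle h_i, \wt(vw)\rangle}$, and that the weight in Lemma \ref{Lem: x f_i,k+1} is interpreted under the identification $\qbA{[k]} \simeq \calU_q^-(\g)$ so that the resulting sign in the final formula lands correctly on $(-1)^k \langle h_i, \wt(u)\rangle$ in the correction term.
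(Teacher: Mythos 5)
Your argument for part (i) is correct and coincides with the paper's own proof: both begin from Corollary \ref{cor: Ep and f} to rewrite $\Ep_{i,k}(uvw)$ as a scalar times $(uvw)f_{i,k+1}$, push $f_{i,k+1}$ leftward past $w$ via the degree-two relation of Definition \ref{Def: extended qg}(b) (equivalently \eqref{Eq: defining rel}), then past $v$ via the $q$-boson relation (the paper invokes Theorem \ref{Thm: E_ik, f_ik}, which with $x=y=w=1$ reduces exactly to your Lemma \ref{Lem: x f_i,k+1}), and finally recognize the principal term as $\Ep_{i,k}(u)vw$ by a second use of Corollary \ref{cor: Ep and f}. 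Your worry about $(-1)^k$-bookkeeping is well placed but resolves correctly: since the $\qbA{}$-weight of $v\in\qbA{[k]}$ is $(-1)^k$ times the $\calU_q^-(\g)$-weight used in Lemma \ref{Lem: x f_i,k+1}, the factor $q_i^{-\langle h_i,\wt(v)\rangle}$ there multiplies against $q_i^{(-1)^k\langle h_i,\wt(v)\rangle+1}$ to give the clean $q_i$ and $(q_i^{-1}-q_i)$ factors in the statement.

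For part (ii), the paper simply asserts it "can be proved in the same manner," i.e.\ by repeating the computation mirror-symmetrically using $\Es_{i,k}(u)=q_i^{\ldots}f_{i,k-1}u$ and pushing $f_{i,k-1}$ rightward. Your alternative — deducing (ii) from (i) via $\Es_{i,k}=*\circ\Ep_{i,-k}\circ*$ with the substitution $(u,v,w)\mapsto(w^*,v^*,u^*)$ and $k\mapsto -k$ — is a genuine, if small, variant and is correct: $*$ sends $\qbA{>k}\to\qbA{<-k}$ and $\qbA{[k]}\to\qbA{[-k]}$, preserves the $\qbA{}$-weight since $(-1)^{-p+1}=(-1)^{p+1}$, intertwines $e'_i$ and $\es_i$ under the identifications of Lemma \ref{Lem: id A and U}, and an anti-automorphism reverses the product so that the three hypotheses of (i) land in the right slots. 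This buys you not having to repeat the computation, at the cost of a slightly heavier notational check; either route is fine.
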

\begin{proof}
\eqref{it: prime 1} By \eqref{Eq: defining rel}, Theorem \ref{Thm: E_ik, f_ik} and Corollary \ref{cor: Ep and f}, we have 
\begin{align*}
\Ep_{i,k}(uvw) &= q_i^{ (-1)^k \langle h_{i}, \wt(uvw)\rangle + 1 } (uvw)  f_{i, k+1} \allowdisplaybreaks\\
&= q_i^{ (-1)^k \langle h_{i}, \wt(uv )\rangle + 1 } u(v f_{i, k+1} ) w \allowdisplaybreaks\\ 
&= q_i^{ (-1)^k \langle h_{i}, \wt(u)\rangle  } \left(  q_i (u f_{i, k+1}) v w + (q_i^{-1}-q_i) u e'_i(v)w  \right) \allowdisplaybreaks\\
&=    \Ep_{i,k} (u) v w +  q_i^{ (-1)^k \langle h_{i}, \wt(u)\rangle  } (q_i^{-1}-q_i) u e'_i(v)w.  
\end{align*}

\eqref{it: prime 2} It can be proved in the same manner as above.
\end{proof}

 \vskip 2em

\section{PBW bases: arbitrary sequences} \label{Section: general PBW}

In this section, we construct and investigate PBW bases for arbitrary sequences. 
This allows us to construct a subalgebra $\qbA{}(\ttb)$ of $\qbA{}$ associated with an element $\ttb$ of the braid group $\ttB_\cmC$. The subalgebra $\qbA{}(\ttb)$ can be understood as a \emph{braid-analogue} of the unipotent quantum coordinate ring $A_q(\n(w))$ associated with an element $w$ of the Weyl group $\weyl_\cmC$.
We shall prove that the PBW monomials associated with any sequence of
$\ttb$ form an orthogonal basis of $\qbA{}(\ttb)$ with respect to the pairing $\pair{ \ , \ }$.

\subsection{Garside normal form}
In this subsection, we briefly recall the basic results of a presentation of $\ttb $ in a Braid group referred to as \emph{Garside normal form} for our purpose (see \cite{Gar96} and see also \cite{MPfactor} and references therein).

We simply write $\ttB = \ttB_\cmC $ and $\weyl = \weyl_\cmC $.
Recall the natural projection $ \pi: \ttB^{+} \rightarrow \weyl$ given in \eqref{Eq: b->W} and the element $\gar \in \ttB^+$  such that $\ell(\gar) = \ell(w_\circ)$ and $ \pi( \gar) = w_\circ$. Note that $\gar^2$  belongs to  the center of $\ttB$. 
For $\ttx,\tty \in \ttB^+$, we write $\ttx \le \tty$ if there exists $\ttz \in \ttB^+$ such that $\ttx\ttz=\tty$. We say $\ttx$ a \emph{prefix} of $\tty$, and a prefix $\ttx$ of $\gar$ an \emph{permutation braid}. 
Note that the partial order $\le$  is invariant under left multiplication and the image of permutation braids under $\pi$ coincides with $\weyl$.

\begin{proposition} [{\cite{Gar96}}]\label{prop: gcd}
For any elements $\ttx,\tty \in \ttB^+$, there exists a unique element $\ttd$ such that $\ttd \le \ttx$, $\ttd \le \tty$ and that $\ttd' \le \ttd$ for every common prefix $\ttd'$
of $\ttx$ and $\tty$. 
\end{proposition}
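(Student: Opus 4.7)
The plan is to follow the classical Garside-theoretic approach, treating uniqueness and existence separately. Uniqueness is routine: the order $\le$ on $\ttB^+$ is antisymmetric because length is additive on $\ttB^+$ and the monoid has no non-trivial units, so $\ttx\ttz = \ttx$ forces $\ttz = 1$; hence if $\ttd_1$ and $\ttd_2$ both satisfy the characterizing universal property, then $\ttd_1 \le \ttd_2$ and $\ttd_2 \le \ttd_1$ force $\ttd_1 = \ttd_2$.

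For existence I would first introduce the set of common prefixes
$$
\mathcal{S} \seteq \{ \ttd \in \ttB^+ \mid \ttd \le \ttx \text{ and } \ttd \le \tty \},
$$
which is nonempty (the identity is a common prefix) and finite, since any $\ttd \in \mathcal{S}$ satisfies $\ell(\ttd) \le \min(\ell(\ttx), \ell(\tty))$ and there are only finitely many positive braids of any given length---being represented by words in the finite alphabet $\{r_i\}_{i \in I}$. Pick $\ttd \in \mathcal{S}$ with $\ell(\ttd)$ maximal; it then remains to upgrade length-maximality to the claim that $\ttd$ dominates every element of $\mathcal{S}$ in the prefix order.

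The heart of the argument is the closure of $\mathcal{S}$ under pairwise right least-common-multiples. Given $\ttd, \ttd' \in \mathcal{S}$, both left-divide $\ttx$, so $\ttx$ is itself a common right-multiple of them; by the foundational theorem of Garside (extended to arbitrary finite Coxeter type by Brieskorn--Saito and Deligne), the pair then admits a least common right-multiple $\ttd \vee \ttd' \in \ttB^+$. Its universal property forces $\ttd \vee \ttd' \le \ttx$, and the same argument with $\tty$ in place of $\ttx$ gives $\ttd \vee \ttd' \le \tty$; hence $\ttd \vee \ttd' \in \mathcal{S}$. Since $\ttd \le \ttd \vee \ttd'$ and $\ttd$ has maximal length, this forces $\ttd = \ttd \vee \ttd'$, and therefore $\ttd' \le \ttd$, as desired.

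The main obstacle is precisely the Garside input: existence of \emph{least} common right-multiples, as opposed to mere existence of common right-multiples. Common right-multiples are easy---a sufficiently high power of $\gar^2$, which is central in $\ttB$, is divisible on the right by any prescribed finite set of positive braids. Promoting this to a \emph{least} common right-multiple requires the cancellativity of $\ttB^+$ together with its lattice-theoretic structure, and is the main substantive content of the cited Garside paper, which I would simply invoke rather than reprove.
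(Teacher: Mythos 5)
The paper does not prove this proposition at all: it is stated as a cited result, attributed directly to \cite{Gar96}, and the text moves on to use it as a black box in establishing the Garside normal form. So there is no in-paper argument to compare against. Evaluating your sketch on its own terms: it is a correct and standard Garside-theoretic derivation of greatest common divisors from the existence of least common right-multiples. Your uniqueness argument (antisymmetry of $\le$ from additivity of length and absence of nontrivial units) is sound, your finiteness-and-maximality setup is sound, and the closure of the set of common prefixes under join is exactly the right mechanism. The one thing worth flagging is a mild risk of circularity in how you gesture at the black box: you phrase the missing ingredient as ``cancellativity together with its lattice-theoretic structure,'' but the lattice structure on $(\ttB^+,\le)$ \emph{is} the conjunction of the LCM and GCD existence statements, the latter of which is precisely what you are trying to prove. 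Your actual argument only uses the existence of least common right-multiples (conditional on a common right-multiple existing), and you should invoke exactly that and no more; stated that way the reduction is clean and non-circular, and matches how Brieskorn--Saito and Deligne organize the proof for spherical Artin monoids.
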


We call $\ttd$ in Proposition~\ref{prop: gcd} the \emph{greatest common divisor} of $\ttx$ and $\tty$, and denote it by $\ttx \wedge \tty$.   
A product $\ttx\tty$ of permutation braids $\ttx$ and $\tty$  is \emph{left-weighted} if $\ttx\tty \wedge \gar=\ttx$.

\begin{theorem} [{\cite{WP,EM94}}] \label{thm: Garside} 
$($Garside left normal form$)$ \
Each element $\ttb \in \ttB$ can be presented by a product of permutation braids of the form
$$ 
\gar^r \ttx_1 \cdots \ttx_k,
$$   
where $r \in \Z$, $k \in \Z_{\ge 0}$, $1 < \ttx_i < \gar$ and $\ttx_i\ttx_{i+1}$ is left-weighted for $1 \le i <k$. 
\end{theorem}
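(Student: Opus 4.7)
The plan is a three-stage argument: reduce to the positive monoid $\ttB^+$, perform a greedy decomposition against $\gar$ using the lattice structure of Proposition~\ref{prop: gcd}, and then verify the strict inequality $\ttx_j < \gar$ together with the left-weighted property.

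First, I would reduce to $\ttB^+$. Since every simple reflection is a left descent of the longest element $w_\circ$, the Garside element admits a factorization $\gar = r_i \mathtt{p}_i$ with $\mathtt{p}_i \in \ttB^+$ for each $i \in I$, whence $r_i^{-1} = \mathtt{p}_i \gar^{-1}$. Centrality of $\gar^2$ implies that conjugation by $\gar$ is an automorphism of $\ttB$ preserving $\ttB^+$ (realizing the Dynkin involution $i \mapsto i^*$ of Lemma~\ref{Lemma: Tw0}). Using these two facts, any word in the $r_i^{\pm 1}$ can be rewritten in the form $\gar^{-m}\mathtt{a}$ with $m \in \Z_{\ge 0}$ and $\mathtt{a} \in \ttB^+$ by pushing all occurrences of $\gar^{-1}$ to the left.

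Next, given $\mathtt{a} \in \ttB^+$, I would strip off the maximal left-power of $\gar$: let $s$ be the largest non-negative integer with $\gar^s \le \mathtt{a}$, which is well-defined since $s\cdot \ell(\gar) \le \ell(\mathtt{a})$. Write $\mathtt{a} = \gar^s \mathtt{a}'$, set $r := s-m$ and $\mathtt{a}_1 := \mathtt{a}'$. Iteratively define
$$
\ttx_j := \mathtt{a}_j \wedge \gar, \qquad \mathtt{a}_{j+1} := \ttx_j^{-1} \mathtt{a}_j,
$$
using Proposition~\ref{prop: gcd} and left-cancellativity of $\ttB^+$. Each $\ttx_j$ is a non-trivial permutation braid: the first letter of any expression of $\mathtt{a}_j$ is some $r_i$, which divides $\gar$, so $r_i \le \ttx_j$. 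The recursion terminates at some index $k$ with $\mathtt{a}_{k+1} = 1$ because $\ell(\mathtt{a}_{j+1}) < \ell(\mathtt{a}_j)$. Left-weightedness of $\ttx_j \ttx_{j+1}$ then follows directly from the defining choice: any common left-divisor $\ttd$ of $\ttx_j \ttx_{j+1}$ and $\gar$ also divides $\mathtt{a}_j = \ttx_j \ttx_{j+1} \cdots \ttx_k$, hence $\ttd \le \mathtt{a}_j \wedge \gar = \ttx_j$, giving $\ttx_j \ttx_{j+1} \wedge \gar = \ttx_j$.

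Finally, to ensure $\ttx_j < \gar$ strictly: if $\ttx_j = \gar$ for some $j \ge 1$, then $\gar \le \mathtt{a}_j$, and then $\mathtt{a}_{j-1} = \ttx_{j-1} \mathtt{a}_j \ge \ttx_{j-1} \gar$; since $\gar^{-1} \ttx_{j-1} \gar \in \ttB^+$ by conjugation-invariance, one obtains $\gar \le \ttx_{j-1} \gar \le \mathtt{a}_{j-1}$. Iterating backwards yields $\gar \le \mathtt{a}_1 = \mathtt{a}'$, contradicting the maximality of $s$. The main obstacle in this plan is precisely this strict-inequality step, which is the only part of the argument that genuinely exploits the Garside structure (centrality of $\gar^2$ and conjugation-invariance of $\ttB^+$ under it); the rest is bookkeeping on top of the lattice property of Proposition~\ref{prop: gcd} and left-cancellativity of $\ttB^+$.
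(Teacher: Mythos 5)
The paper does not actually prove Theorem~\ref{thm: Garside}; it is imported from \cite{WP,EM94} as a black box, so there is no internal argument to compare against. Your proposal is, however, a correct self-contained proof, and it follows the standard Garside/El-Rifai--Morton construction: clear denominators by pushing $\gar^{-1}$ to the left using $\gar r_i = r_{i^*}\gar$ and $r_i^{-1}=\mathtt{p}_i\gar^{-1}$; strip the maximal $\gar$-power; then greedily peel off $\ttx_j := \mathtt{a}_j \wedge \gar$. Each step you flag is sound. Non-triviality $1<\ttx_j$ holds because every $r_i$ is a left divisor of $\gar$ (every $s_i$ is a left descent of $w_\circ$), so the first letter of $\mathtt{a}_j$ divides $\ttx_j$. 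Left-weightedness follows cleanly from $\ttd\le\ttx_j\ttx_{j+1}\le\mathtt{a}_j$ and $\ttd\le\gar$ giving $\ttd\le\mathtt{a}_j\wedge\gar=\ttx_j$. The strictness $\ttx_j<\gar$ is correctly reduced by downward induction via $\ttx_{i-1}\gar=\gar(\gar^{-1}\ttx_{i-1}\gar)$, landing on $\gar\le\mathtt{a}_1$ and contradicting the maximality of $s$; this is exactly the place where centrality of $\gar^2$ (equivalently, that $\gar$-conjugation is the Dynkin involution on generators and preserves $\ttB^+$) is needed. One small inaccuracy in your closing remark: Stage~1 already invokes this same conjugation-invariance, so the strictness step is not the \emph{only} place the Garside structure is used; but that is a presentational comment, not a gap.
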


\begin{corollary} 
\label{Cor: braid} 
For any $\ttx \in \ttB^+$, there exists $\tty\in \ttB^+$ and $m\in \Z_{\ge 0}$ such that $ \ttx\tty = \gar^m$. 
\end{corollary}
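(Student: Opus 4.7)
The plan is to combine two facts already noted in the text: (i) $\gar^2$ lies in the centre of $\ttB$ (stated just after the definition of $\gar$), and (ii) every element of $\ttB$ admits a Garside normal form in the sense of Theorem~\ref{thm: Garside}. Given $\ttx \in \ttB^+$, I will apply the Garside normal form to the \emph{group inverse} $\ttx^{-1} \in \ttB$ to obtain a presentation
$$
\ttx^{-1} \;=\; \gar^r \, \ttx_1 \ttx_2 \cdots \ttx_k
$$
with $r \in \Z$, $k \in \Z_{\ge 0}$, and each $\ttx_i$ a permutation braid (so in particular $\ttx_i \in \ttB^+$). Note that $r$ may be negative; this is the only mildly delicate point.

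Next, I will choose any $N \in \Z_{\ge 0}$ with $2N + r \ge 0$ and set
$$
\tty \;:=\; \gar^{2N}\, \ttx^{-1} \qquad\text{and}\qquad m \;:=\; 2N.
$$
Rewriting $\tty$ via the normal form gives
$$
\tty \;=\; \gar^{2N+r}\, \ttx_1 \cdots \ttx_k,
$$
which is a product of elements of $\ttB^+$ (since $\gar \in \ttB^+$, $2N+r\ge 0$, and each $\ttx_i \in \ttB^+$), and therefore lies in $\ttB^+$ itself. On the other hand, the centrality of $\gar^2$ (and hence of $\gar^{2N}$) in $\ttB$ yields
$$
\ttx \tty \;=\; \ttx \cdot \gar^{2N} \cdot \ttx^{-1} \;=\; \gar^{2N} \cdot \ttx \ttx^{-1} \;=\; \gar^{m},
$$
which is exactly the required identity.

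There is essentially no substantial obstacle here: the argument is a direct consequence of Theorem~\ref{thm: Garside} together with the centrality of $\gar^2$. The only bookkeeping point is that the Garside exponent $r$ of $\ttx^{-1}$ need not be non-negative, and the role of pre-multiplying by $\gar^{2N}$ is precisely to absorb any negative $r$ into a non-negative exponent so that the resulting expression can be read as a positive braid. No further ingredients (for example, no use of the diagram involution induced by conjugation by $\gar$) are required.
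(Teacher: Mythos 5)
Your argument is correct, and it reaches the conclusion by a somewhat different route than the paper. The paper applies Theorem~\ref{thm: Garside} directly to $\ttx$ itself, writing $\ttx=\gar^r\ttx_1\cdots\ttx_k$ (with $r\ge 0$ since $\ttx\in\ttB^+$), then constructs $\tty$ by completing each permutation braid $\ttx_i$ to $\gar$ via a positive complement $\tty_i$ with $\ttx_i\tty_i=\gar$, setting $\tty=(\tty_k\gar)(\tty_{k-1}\gar)\cdots(\tty_1\gar)$; the central element $\gar^2$ is then used to collapse $\ttx\tty$ to $\gar^{r+2k}$. You instead apply the normal form to $\ttx^{-1}$, define $\tty=\gar^{2N}\ttx^{-1}$ for $N$ large enough that the exponent $2N+r$ is nonnegative, and observe that this expression for $\tty$ is then visibly a product of positive braids, while centrality of $\gar^{2N}$ makes $\ttx\tty=\gar^{2N}$ immediate. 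Your approach buys a shorter verification (no need to exhibit the complements $\tty_i$), at the mild cost of passing through the inverse $\ttx^{-1}\in\ttB$ and therefore relying on the normal form for general, not just positive, braids; the paper's approach stays entirely inside $\ttB^+$ and yields the minimal $m=r+2k$ coming from the normal form of $\ttx$, whereas your $m=2N$ is chosen just large enough. Both proofs hinge on exactly the same two ingredients, namely Garside normal form and centrality of $\gar^2$, so the difference is in bookkeeping rather than in the essential idea.
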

\begin{proof}
By Theorem~\ref{thm: Garside}, $\ttx$ can be presented by $\gar^r \ttx_1 \cdots \ttx_k$. Take $\tty_i \in \ttB^+$
such that $\ttx_i\tty_i =\gar$ for $1 \le i \le k$ and set
$$
\tty \seteq (\tty_k\gar)(\tty_{k-1}\gar) \cdots (\tty_1\gar) \in \ttB^+. 
$$
Then the assertion follows from the fact that $\gar^2$ belongs to the center of $\ttB$. 
\end{proof}

\subsection{Subalgebras $\qbA{}(\ttb)$} 
In this subsection we define a subalgebra of $\qbA{}(\ttb)$ of $\qbA{}$ associated with an element $ \ttb \in \ttB^+$.

\begin{definition} \label{Def: Aq(P;r,s)} \
\bnum 
\item A pair $\pd = (\ii, \xi)$ is called a PBW datum of $\ttb \in \bg^+$ if $\ii \in \Seq(\ttb)$ and $\xi \in \Z$.

\item Let $\pd = (\ii, \xi)$ be a PBW datum of $\ttb \in \bg^+$ and set $N\seteq \ell(\ttb)$. For any interval $\ttJ \subseteq [1,N]$, we define $ \qbAu{\ttJ,\pd} $ to be the $\bR$-subalgebra of $\qbA{}$ generated by the PBW root vectors $\rF^{\pd}_k$ for $  k \in \ttJ$.
When $\xi = 0$ and $\ttJ=[1,N]$, we simple write 
$$
\qbA{}(\ttb)\seteq \qbA{}(\ii) = \qbAu{[1,N],\pd}.
$$  
\ee
\end{definition}
We will deal with the well-definedness for $\qbA{}(\ttb)$ in Corollary \ref{Cor: well-def} below.

\smallskip

Under the isomorphism $\Um \simeq \qbA{[k]}$ for each $k \in \Z$, it follows from \eqref{eq: PBW basis} and Proposition \ref{Prop: T and S} that  
\begin{equation} \label{Eq: braid qA}
\begin{aligned}
\qbA{}(\hspace{-1.3ex}\underbrace{i,j,\cdots}_{m(i,j)\text{-factors}}\hspace{-1.3ex}) &
= \qbA{}(\hspace{-1.5ex}\underbrace{j,i\cdots}_{m(i,j)\text{-factors}}\hspace{-1.3ex})  \quad \text{ for any $i \ne j \in I$},  
\end{aligned}
\end{equation}
where $m(i,j)$ is given in \eqref{Eq: m(i,j)}.

From Definition \ref{Def: Aq(P;r,s)}, we have the following.

\begin{lemma} \label{Lem: qA(pd, rs)}
Let $\pd = (\ii, \xi)$ be a PBW datum of $\ttb \in \bg^+$, and write $\ii = (i_k)_{k\in [1,N]}$. 
\bnum
\item \label{it: qA1} Let $r,s \in [1,N]$ with $r \le s$.
We set $\ii' = (i_r, i_{r+1}, \ldots, i_N )$, and define $\pd' \seteq (\ii', \xi)$. Then we have 
$$
\qbAu{[r, s],\pd}  = \bT_{i_{1}} \bT_{i_{2}} \cdots \bT_{i_{r-1}} \left( \qbAu{[1,s-r+1],\pd'} \right). 
$$
\item Let $k\in \Z$ and set $\pd'' \seteq (\ii, \xi+k) $. Then we have 
$$
\qbAu{[r, s],\pd''} = \dD^k \left( \qbAu{[r, s],\pd}  \right).
$$
\ee
\end{lemma}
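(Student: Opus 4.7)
The plan is to reduce both assertions to direct computations with PBW root vectors, since the subalgebras $\qbAu{\ttJ,\pd}$ are generated by these vectors by Definition \ref{Def: Aq(P;r,s)}, and both $\bT_i$ and $\dD$ are $\bR$-algebra automorphisms of $\qbA{}$.

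For part (i), I would first unfold the definitions of the PBW root vectors. Writing $\pd' = (\ii', \xi)$ with $\ii' = (i_r, i_{r+1}, \ldots, i_N)$, for each $k \in [1, s-r+1]$ we have
\begin{equation*}
\rF^{\pd'}_k = \bT_{i_r} \bT_{i_{r+1}} \cdots \bT_{i_{r+k-2}}(f_{i_{r+k-1}, \xi}),
\end{equation*}
so by Proposition \ref{Prop: Ti braid} applied to the prefix $\bT_{i_1} \cdots \bT_{i_{r-1}}$,
\begin{equation*}
\bT_{i_1} \bT_{i_2} \cdots \bT_{i_{r-1}}\bigl(\rF^{\pd'}_k\bigr) = \bT_{i_1} \cdots \bT_{i_{r+k-2}}(f_{i_{r+k-1},\xi}) = \rF^{\pd}_{r+k-1}.
\end{equation*}
Thus $\bT_{i_1} \cdots \bT_{i_{r-1}}$ sends the generating set $\{ \rF^{\pd'}_k \mid k \in [1, s-r+1]\}$ of $\qbAu{[1,s-r+1],\pd'}$ bijectively onto the generating set $\{ \rF^{\pd}_j \mid j \in [r, s]\}$ of $\qbAu{[r,s],\pd}$. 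Since $\bT_{i_1} \cdots \bT_{i_{r-1}}$ is a $\bR$-algebra automorphism of $\qbA{}$, the claimed equality of subalgebras follows.

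For part (ii), the argument is identical in spirit but uses $\dD$ instead. By definition of $\dD$ we have $\dD^k(f_{i_m, \xi}) = f_{i_m, \xi+k}$ for each $m$. By Lemma \ref{Lem: Ti and others}\eqref{it: d T}, $\dD$ commutes with every $\bT_i$, hence with every product $\bT_{i_1}\cdots \bT_{i_{m-1}}$; therefore
\begin{equation*}
\dD^k\bigl(\rF^{\pd}_m\bigr) = \bT_{i_1} \cdots \bT_{i_{m-1}}\bigl(\dD^k(f_{i_m,\xi})\bigr) = \bT_{i_1} \cdots \bT_{i_{m-1}}(f_{i_m, \xi+k}) = \rF^{\pd''}_m
\end{equation*}
for each $m \in [1,N]$. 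Restricting to $m \in [r,s]$, the automorphism $\dD^k$ sends the generators of $\qbAu{[r,s],\pd}$ bijectively onto those of $\qbAu{[r,s],\pd''}$, which yields the second identity.

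There is no substantive obstacle here; the proof is a bookkeeping exercise once one observes that the two automorphisms in question preserve (or shift in a controlled way) the defining PBW generators. The only thing worth being careful about is invoking the braid relations (Proposition \ref{Prop: Ti braid}) to ensure the compositions $\bT_{i_1} \cdots \bT_{i_{r-1}}$ and $\bT_{i_r} \cdots \bT_{i_{r+k-2}}$ can be composed unambiguously into $\bT_{i_1} \cdots \bT_{i_{r+k-2}}$, and Lemma \ref{Lem: Ti and others}\eqref{it: d T} to commute $\dD$ past all the $\bT_i$'s.
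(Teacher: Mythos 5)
Your proof is correct and supplies exactly the unwinding of definitions that the paper leaves implicit (the paper presents this lemma as an immediate consequence of Definition~\ref{Def: Aq(P;r,s)} with no written proof). One small remark: the appeal to Proposition~\ref{Prop: Ti braid} is not actually needed — you are composing literal products of the specific automorphisms $\bT_{i_1},\dots,\bT_{i_{r+k-2}}$ rather than choosing different expressions of a braid element, so associativity of composition alone gives $\bigl(\bT_{i_1}\cdots\bT_{i_{r-1}}\bigr)\circ\bigl(\bT_{i_r}\cdots\bT_{i_{r+k-2}}\bigr)=\bT_{i_1}\cdots\bT_{i_{r+k-2}}$; the braid relations play no role here.
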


\subsection{PBW bases for $\qbA{}(\ttb)$}
In this subsection, we shall construct the PBW bases of $\qbA{}(\ttb)$ for arbitrary element $\ttb \in \bg^+$. 

We first focus on the case where $\ttb = \gar^m$. 
Let $\pd = (\ii, \xi)$ be a PBW datum of $\gar^m \in \bg^+$. We set $N\seteq \ell(\gar^m)  = m \ell$ and write $\ii = (i_k)_{k\in [1,N]} \in \Seq(\gar^m)$.

\begin{assumption} \label{Assu: pbw} \
\bna
\item \label{it: a1} The set $\sfP_\pd \seteq \{ \rF_{\pd}(\bsa) \mid \bsa \in \Z_{\ge 0}^{[1,N]} \}$ is a $\bR$-linear orthogonal basis of $\qbAu{[1, N],\pd}$ with respect to the bilinear form $\pair{\ , \ }$.   
\item \label{it: a2} For any $\bst, \bst' \in \Z_{\ge0}^{[1,N]}$, we have 
$$
\pair{\rF_{\pd}(\bst), \rF_{\pd}(\bst')} = \prod_{k=1}^N \delta_{t_k, t_k'} q_{i_k}^{-t_{k}(t_{k}-1)/2} (q_{i_k}^{-1}-q_{i_k})^{t_k}  [t_{k}]_{i_k}!.
$$
\item \label{it: a3} 
For any $k, t \in [1,N] $ with $ k < t$, we have 
$$
\rF^{\pd}_k \rF^{\pd}_t - q^{ - ( \wt(\rF^{\pd}_k), \wt (\rF^{\pd}_t) ) } \rF^{\pd}_t \rF^{\pd}_k = \sum_{\bsc \in \Z_{\ge 0}^{\oplus(k, t) }} A_\bsc  \rF_{\pd}(\bsc) 
\quad \text{ for some $A_\bsc \in \bR$.}
$$
\ee
\end{assumption} 

\begin{lemma} \label{Lem: PBW datum-orthogonal}
Suppose that a PBW datum $\pd = (\ii, \xi)$ of $\gar^m$ satisfies Assumption \ref{Assu: pbw}. Let $r,t,s \in [1,N]$ with $r  \le  t<s$.
\bnum
\item As a $\bR$-vector space, we have  
$$
\qbAu{[r,s],\pd}  \simeq \qbAu{(t,s],\pd}  \otimes_{\bR} \qbAu{[r,t],\pd} .  
$$
\item Let $B_1$ and $B_2$ be $\bR$-linear orthogonal bases of $\qbAu{(t,s],\pd}$ and $\qbAu{[r,t],\pd}$ respectively. Then the set $ B_1 \times B_{2} $ is a $\bR$-linear orthogonal basis of $\qbAu{[r,s],\pd}$.
\item For any $x,x' \in \qbAu{(t,s],\pd}$ and $y,y' \in \qbAu{[r,t],\pd}$, we have 
$$ \pair{xy, x'y'} = \pair{x,x'}\pair{y,y'}. $$
\ee
\end{lemma}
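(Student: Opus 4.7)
The plan is to bootstrap everything directly from Assumption \ref{Assu: pbw}: the explicit diagonal pairing formula in (b) and the LS-type straightening in (c) will do all the work. The preliminary step I would take is to identify, for any subinterval $\ttJ \subseteq [1,N]$, a natural monomial basis of $\qbAu{\ttJ,\pd}$, namely the subset of $\sfP_\pd$ consisting of those $\rF_\pd(\bsa)$ with $\bsa$ supported in $\ttJ$. Spanning follows by repeatedly applying the LS relation in (c) to any word in the generators $\rF^\pd_k$ ($k\in \ttJ$): each straightening move either swaps two adjacent factors or replaces their product with a sum of decreasing monomials whose indices lie in the open interval between the two swapped indices, hence still in $\ttJ$. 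Linear independence and orthogonality of this candidate basis are inherited verbatim from $\sfP_\pd$ and (b).

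For (i), I would use the convention that $\Opd^{\longrightarrow}_k$ orders factors by decreasing $k$: any decreasing monomial $\rF_\pd(\bsa)$ with $\bsa$ supported on $[r,s]$ factors canonically as $\rF_\pd(\bsa^+) \cdot \rF_\pd(\bsa^-)$, where $\bsa^+$ is the restriction of $\bsa$ to $(t,s]$ and $\bsa^-$ the restriction to $[r,t]$. Thus the multiplication map $\qbAu{(t,s],\pd} \otimes_{\bR} \qbAu{[r,t],\pd} \longrightarrow \qbAu{[r,s],\pd}$ sends the tensor product of the two monomial bases bijectively onto the monomial basis of $\qbAu{[r,s],\pd}$ identified above, proving (i). For (iii), by bilinearity it suffices to check the identity on pairs of decreasing monomials $x = \rF_\pd(\bsa)$, $x' = \rF_\pd(\bsa')$ supported in $(t,s]$ and $y = \rF_\pd(\bsb)$, $y' = \rF_\pd(\bsb')$ supported in $[r,t]$, where $xy = \rF_\pd(\bsa+\bsb)$ and $x'y' = \rF_\pd(\bsa'+\bsb')$. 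The formula in (b) then produces a product of Kronecker deltas indexed by $k \in [1,N]$ that splits cleanly along the two sub-intervals, giving $\pair{xy, x'y'} = \pair{x,x'}\pair{y,y'}$.

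Part (ii) is then immediate: $B_1 \times B_2$ spans $\qbAu{[r,s],\pd}$ by (i), while the factorization in (iii) combined with the orthogonality of $B_1$ and $B_2$ forces $\pair{xy, x'y'} = 0$ whenever $(x,y) \ne (x',y')$, and the non-vanishing on the diagonal follows from the non-degeneracy in (b). I do not anticipate a serious obstacle here; the entire statement is a bookkeeping consequence of Assumption \ref{Assu: pbw}. The only mildly delicate point is the spanning part of the preliminary step, which relies essentially on the fact that the LS correction terms in (c) remain inside the open sub-interval $(k,t)$, so that straightening terminates without ever introducing PBW root vectors outside $\ttJ$.
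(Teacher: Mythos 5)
Your proposal is correct, and it takes the same route the paper intends: the paper's proof reads ``It follows from Assumption~\ref{Assu: pbw} directly,'' and your argument is precisely the natural unpacking of that one-liner. The preliminary identification of $\qbAu{\ttJ,\pd}$ with $\Span_\bR\{\rF_\pd(\bsa) : \mathrm{supp}(\bsa)\subseteq\ttJ\}$ (a subset of the orthogonal basis $\sfP_\pd$ of Assumption~\ref{Assu: pbw}\,(a)) is the right preliminary step, and once it is in place, (i) is the observation that the multiplication map is a bijection of monomial bases (using that $\Opd^{\longrightarrow}$ orders by decreasing index, so monomials supported on the disjoint intervals $(t,s]$ and $[r,t]$ concatenate correctly), (iii) is read off from the explicit diagonal formula in (b) since the product over $k\in[1,N]$ splits over the two disjoint index ranges, and (ii) combines (i) with (iii) and the non-vanishing of the diagonal pairings.

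The one place you gloss slightly is the termination of the straightening process for the spanning claim: repeatedly applying (c) a priori replaces a single pair $\rF^\pd_k\rF^\pd_t$ by $\rF^\pd_t\rF^\pd_k$ plus monomials $\rF_\pd(\bsc)$ that may have \emph{more} factors, so ``number of factors'' is not a valid induction variable. The clean termination argument is an induction on the largest index $M$ appearing, then on the number of occurrences of $\rF^\pd_M$ (the correction terms in (c) never contain $\rF^\pd_M$, so each application of (c) to a pair with top index $M$ either slides an $\rF^\pd_M$ leftward, decreasing a positional measure, or produces terms with strictly fewer $\rF^\pd_M$'s). You correctly flag this as the only delicate point and identify why it works — the correction terms live in the open interval $(k,t)$ — so this is a footnote rather than a gap.
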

\begin{proof}
It follows from Assumption \ref{Assu: pbw} directly. 
\end{proof}

\begin{lemma} \label{Lem: initial}
There is a sequence $\ii \in \Seq(\gar^m)$ such that, for any $\xi\in \Z$, the PBW datum $\pd = (\ii, \xi)$ satisfies  {\rm Assumption \ref{Assu: pbw}}. 
\end{lemma}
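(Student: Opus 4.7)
The plan is to exhibit such a sequence explicitly by using a locally reduced sequence, and then invoke the PBW theory already developed in Section~\ref{Sec: PBW locally reduced}.

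First I would choose a locally reduced sequence $\bfi^\Diamond = (i_k)_{k \in \Z} \in I^\Z$ in the sense of \eqref{Eq: longest case}, so that each block $(i_{r\ell+1}, \ldots, i_{(r+1)\ell})$ is a reduced expression of $w_\circ$. Setting $\bfi \seteq (i_1, i_2, \ldots, i_{m\ell}) \in I^{[1,m\ell]}$, I would check that $\bfi \in \Seq(\gar^m)$: each consecutive block of length $\ell$ is a reduced expression of $w_\circ$, so its product in $\ttB^+$ is a permutation braid of length $\ell$ which must equal $\gar$; concatenating $m$ such blocks yields $\gar^m$, and since $\ell(\gar^m) = m\ell = \ell(\bfi)$, we indeed have $\bfi \in \Seq(\gar^m)$.

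Next, for any $\xi \in \Z$, I would take the PBW datum $\pd = (\bfi, \xi)$. In the notation of Section~\ref{Sec: PBW locally reduced} this corresponds to the choice $a = \xi$, $b = \xi + m - 1$, and $N = m\ell$. Consequently the PBW root vectors $\rF^\pd_k$ of Definition~\ref{Def: Aq(P;r,s)} coincide with the PBW root vectors constructed from the locally reduced sequence in \eqref{Eq: PBW vectors}, and by Lemma~\ref{Lem: Fk+ell=Dfk} every $\rF^\pd_k$ lies in $\qbA{[\xi, \xi+m-1]}$. Applying Theorem~\ref{Thm: orthonormal_reduced}, the set $\sfP_\pd = \{ \rF_\pd(\bsa) \mid \bsa \in \Z_{\ge0}^{[1,N]} \}$ is a $\bR$-linear orthogonal basis of $\qbA{[\xi, \xi+m-1]}$ with respect to $\pair{\ , \ }$, with pairing values computed by Theorem~\ref{Thm: orthonormal_reduced}\eqref{it: reduced ortho3}. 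In particular $\sfP_\pd \subseteq \qbAu{[1,N],\pd}$, and since the subalgebra $\qbAu{[1,N],\pd}$ is generated by its PBW root vectors and $\sfP_\pd$ spans the ambient space $\qbA{[\xi, \xi+m-1]}$, we obtain $\qbAu{[1,N],\pd} = \qbA{[\xi, \xi+m-1]}$ and $\sfP_\pd$ is an orthogonal basis of this subalgebra. This yields conditions \eqref{it: a1} and \eqref{it: a2} of Assumption~\ref{Assu: pbw}.

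Finally I would verify condition \eqref{it: a3} by a direct appeal to Lemma~\ref{Lem: pbw comm}: for any $k < t$ in $[1, N]$, the commutator $\rF^\pd_k \rF^\pd_t - q^{-(\wt \rF^\pd_k, \wt \rF^\pd_t)} \rF^\pd_t \rF^\pd_k$ lies in the span of monomials $\rF^{c_{t-1}}_{t-1} \cdots \rF^{c_{k+1}}_{k+1}$ with indices strictly between $k$ and $t$, which is a special case of the Levendorskii--Soibelman-type formula already proved there. No obstacle arises here; the essential content was built into the choice of $\bfi^\Diamond$ being locally reduced, which is precisely the hypothesis under which the PBW theory of Section~\ref{Sec: PBW locally reduced} operates. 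The only point requiring care is the identification $\bfi \in \Seq(\gar^m)$, i.e., that a locally reduced sequence of length $m\ell$ represents the positive lift $\gar^m$ rather than some shorter element of $\ttB^+$; but this is immediate from the length count and the fact that products of reduced expressions of $w_\circ$ lift uniquely to $\gar$ in $\ttB^+$.
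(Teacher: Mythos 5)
Your proof is correct and follows the same route as the paper's own proof: choose a locally reduced sequence, truncate to length $m\ell$ to get an element of $\Seq(\gar^m)$, and then read off Assumption~\ref{Assu: pbw}\eqref{it: a1}--\eqref{it: a2} from Theorem~\ref{Thm: orthonormal_reduced} and \eqref{it: a3} from Lemma~\ref{Lem: pbw comm}. The paper's proof is a one-line citation of those two results; you simply spell out the intermediate bookkeeping (identifying $a=\xi$, $b=\xi+m-1$, $N=m\ell$, and the equality $\qbAu{[1,N],\pd}=\qbA{[\xi,\xi+m-1]}$), all of which is accurate.
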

\begin{proof}
We choose a locally reduced sequence $(i_k)_{k\in \Z}   \in I^\Z$ satisfying \eqref{Eq: longest case} and set 
$\bfi \seteq (i_k)_{k\in [1,m\ell]}$. It is obvious that $\bfi \in \Seq(\gar^m)$. Then the assertion follows from Theorem \ref{Thm: orthonormal_reduced} and Lemma \ref{Lem: pbw comm}.
\end{proof}

For $i \ne j\in I$, we set 
$$
\eta(i,j) \seteq m(i,j)-1 
$$
The following is the main theorem of this subsection.
\begin{theorem} \label{Thm: key thm1}
Suppose that a PBW datum $\pd = (\ii, \xi)$ of $\gar^m$ satisfies {\rm Assumption \ref{Assu: pbw}}. Let $\jj$ be the sequence obtained from $\ii$ by applying a single braid move. 
Then the PBW datum $\pd' = (\jj, \xi)$ also satisfies {\rm Assumption \ref{Assu: pbw}}.
\end{theorem}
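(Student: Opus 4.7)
The plan is to localize the effect of the braid move to a contiguous window $[k,L]$ of length $m(i,j)$, decompose both $\qbAu{[1,N],\pd}$ and $\qbAu{[1,N],\pd'}$ as a threefold tensor product via Lemma~\ref{Lem: PBW datum-orthogonal}, and then verify clauses (a)--(c) of Assumption~\ref{Assu: pbw} piece by piece. Let the braid move replace the subword $(i_k,\ldots,i_L)=(i,j,i,\ldots)$ of $\ii$ by $(j_k,\ldots,j_L)=(j,i,j,\ldots)$ with $L=k+m(i,j)-1$. Outside $[k,L]$ the two sequences agree, and the braid relations for $\bT$ (Proposition~\ref{Prop: Ti braid}) yield $\bT_{i_k}\cdots\bT_{i_L}=\bT_{j_k}\cdots\bT_{j_L}$, hence $\rF^{\pd}_r=\rF^{\pd'}_r$ for every $r\in[1,N]\setminus[k,L]$. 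Conjugating by $\bT_{i_1}\cdots\bT_{i_{k-1}}$ and shifting by $\dD^{\xi}$ via Lemma~\ref{Lem: qA(pd, rs)} reduces the study of the middle factor to a rank~2 problem: the sequences $(i,j,i,\ldots)$ and $(j,i,j,\ldots)$ of length $m(i,j)$ are reduced expressions of the longest element of $\weyl_{\{i,j\}}\subseteq\weyl$, so they are locally reduced in the sense of \eqref{eq: locally reduced}. Theorem~\ref{Thm: orthonormal_reduced} and Lemma~\ref{Lem: pbw comm} then furnish orthogonal PBW bases, the pairing of Assumption~\ref{Assu: pbw}(b), and the LS formula of Assumption~\ref{Assu: pbw}(c) for each of the two middle sub-data; identity \eqref{Eq: braid qA} identifies the two resulting subalgebras as one and the same middle subalgebra $M$.

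Applying Lemma~\ref{Lem: PBW datum-orthogonal} with cut-offs $k-1$ and $L$ yields
\[
\qbAu{[1,N],\pd}\;\simeq\;\qbAu{(L,N],\pd}\tens_{\bR}M\tens_{\bR}\qbAu{[1,k-1],\pd}
\]
and the analogous decomposition for $\pd'$. The two outer factors coincide for $\pd$ and $\pd'$ and already satisfy Assumption~\ref{Assu: pbw} by hypothesis, while $M$ does so by the rank~2 analysis just described. Clauses (a) and (b) for $\pd'$ then follow at once by combining the three orthogonal bases and using the multiplicativity of $\pair{\ ,\ }$ across factors provided by Lemma~\ref{Lem: PBW datum-orthogonal}(iii); the braid invariance of $\pair{\ ,\ }$ in Proposition~\ref{Prop: invariant form} guarantees that the two rank~2 presentations of $M$ give the same pairing formula.

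Clause (c), the LS formula for $\pd'$, is established by a case analysis on $(k',t')\in[1,N]^2$ with $k'<t'$. When $k',t'$ lie in a common factor, the formula is immediate from the hypothesis on $\pd$ (outer cases) or from the rank~2 analysis (middle). When $k'$ and $t'$ straddle the middle, i.e.\ $k'<k\le L<t'$, the two vectors coincide with their $\pd$-counterparts, so the LS formula for $\pd$ places the commutator in $\qbAu{(k',t'),\pd}$; the tensor decomposition forces $\qbAu{(k',t'),\pd}=\qbAu{(k',t'),\pd'}$, and clause (a) for $\pd'$ then re-expands the commutator as required. The main obstacle is the remaining mixed case, typified by $k'\in[1,k-1]$ and $t'\in[k,L]$, for which $\rF^{\pd'}_{t'}\in M$ need not equal $\rF^{\pd}_{t'}$ despite having the same weight. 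The strategy is to expand $\rF^{\pd'}_{t'}$ in the $\pd$-PBW basis of $M$ (the expansion is weight-restricted), to iterate the LS formula for $\pd$ in order to commute $\rF^{\pd'}_{k'}=\rF^{\pd}_{k'}$ across the resulting middle monomials, and to control the correction terms via a pairing argument using the adjoint operators $\Ep_{i,p}$ and $\Es_{i,p}$ of Section~\ref{Sec: adjoint operators}: testing orthogonality of the commutator against every $\pd'$-PBW monomial not supported in $(k',t')$ forces the commutator into $\qbAu{(k',t'),\pd'}$, after which clause (a) supplies the required expansion.
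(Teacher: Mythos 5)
Your overall architecture matches the paper's: a threefold tensor decomposition around the braid-move window (via Lemma~\ref{Lem: PBW datum-orthogonal}), outer factors unchanged because $\rF^{\pd}_r=\rF^{\pd'}_r$ there, the middle factor identified through \eqref{Eq: braid qA} and handled by the rank-two locally-reduced analysis of Section~\ref{Sec: PBW locally reduced}, and clauses (a)--(b) for $\pd'$ by multiplicativity of $\pair{\ ,\ }$ across the factors. Your case split for clause (c) and your identification of the mixed case as the sticking point are likewise the same as in the paper.

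The gap is in the mixed case itself. Your proposed intermediate step---expanding $\rF^{\pd'}_{t'}$ in the $\pd$-PBW basis of the middle factor and iterating the LS formula for $\pd$---does not lead to the target: when $t'<L$, the iterated corrections live in $\pd$-monomials whose support may run all the way to $L$, and there is no reason their re-expansion in the $\pd'$-basis should be supported inside $(k',t')$. You then retreat to ``testing orthogonality of the commutator against every $\pd'$-PBW monomial not supported in $(k',t')$,'' but that \emph{is} the statement to be proved, and you supply no mechanism for evaluating those pairings. The paper's proof never passes through the $\pd$-basis: having already secured (a)--(b) for $\pd'$, it writes $\rF^{\pd'}_{s}\rF^{\pd'}_{t}=\sum_{\bsd}A_\bsd\,\rF_{\pd'}(\bsd)$ directly, then applies $\bT^{-1}_{j_{s-1}}\cdots\bT^{-1}_{j_1}$ to collapse the $s$-th vector to the generator $f_{j_s,\xi}$ while, by Lemma~\ref{Lem: codomain of T_i}, pushing the remaining root vectors into $(\qbA{\ge\xi})_{j_s}$ and $\qbA{<\xi}$. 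Only in that normalized position do the derivation-type identities for $\Ep_{j_s,\xi}$ and $\Es_{j_t,\xi}$ (Corollary~\ref{cor: prime operation}) make the pairings explicitly computable, killing $A_\bsd$ for $\bsd$ supported below $s$ or with $d_s>1$ and isolating the $q$-commutator term; a second such reduction up to $j_{t-1}$ kills $d_t>0$. This two-stage reduction by $\bT^{-1}$ is the heart of the argument, and without it your orthogonality step remains a restatement of the goal rather than a proof.
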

\begin{proof}
Let $\ii = (i_k)_{k\in [1,N]}$ and let $\jj = (j_k)_{k\in [1,N]}$.
Suppose that $\jj$ is obtained from $\ii$ by applying a single braid move at the $c$-th position, i.e., 
$ i_k = j_k $ for either $ k < c $ or $ c+\eta < k$ and $(i_c, i_{c+1}, \ldots, i_{c+\eta})$ and $(j_c, j_{c+1}, \ldots, j_{c+\eta})$ are related by an $m(i_c,i_{c+1})$-braid move, where $\eta\seteq \eta(i_c,i_{c+1})$. 
Note that, for either $k <c$ or $k > c+\eta$, we have 
$$
\rF^{\pd}_k = \rF^{\pd'}_k
$$
by Proposition~\ref{Prop: Ti braid}.
It follows from \eqref{Eq: braid qA} and 
Lemma~\ref{Lem: qA(pd, rs)} that 
\begin{align*}
\qbAu{[1,c),\pd}  &= \qbAu{[1,c),\pd'}, \quad
\qbAu{[c,c+\eta],\pd} = \qbAu{[c,c+\eta],\pd'} \qtq 
\qbAu{(c+\eta,N],\pd} = \qbAu{(c+\eta, N],\pd'}.
\end{align*}
By Assumption \ref{Assu: pbw} and Lemma \ref{Lem: PBW datum-orthogonal}, the PBW datum $\pd' = (\jj, \xi)$ satisfies Assumption \ref{Assu: pbw}~\eqref{it: a1} and~\eqref{it: a2}.

We shall prove that $\pd' = (\jj, \xi)$ satisfies Assumption \ref{Assu: pbw}~\eqref{it: a3}.
Let $s,t \in [1,N]$. 

\noindent
$\bullet$ If $s,t \in [1,c) \cup (c+\eta,N]$, then it follows from Lemma \ref{Lem: qA(pd, rs)} and Assumption \ref{Assu: pbw}. 

\noindent
$\bullet$ If $s,t \in [c, c+\eta]$, then it follows from Lemma~\ref{Lem: pbw comm} and Lemma~\ref{Lem: qA(pd, rs)}.

We first assume that $ s \in [c, c+\eta] $ and $ t \in (c+\eta, N] $.
Since $ \rF^{\pd'}_s \in \qbAu{[c,c+\eta],\pd}$, by \eqref{Eq: braid qA}, Lemma \ref{Lem: qA(pd, rs)} and Assumption \ref{Assu: pbw},
one can write 
\begin{align} \label{eq: F_P's F_Pt}
\rF^{\pd'}_s \rF^{\pd'}_t = \sum_{\bsd } A_\bsd \rF_{\pd'}(\bsd) \quad \text{ for some $A_\bsd \in \bR$},
\end{align}
where $\bsd = (d_c,d_{c+1},\ldots,d_t)$ runs over all elements in $ \Z_{\ge0}^{\oplus[c, t]} $. 
Let $\{ \bse_k \}_{c \le k \le t}$ be a natural basis of $ \Z_{\ge0}^{\oplus[c, t]} $. 

\smallskip

We define 
\begin{align*}
\rG_k &\seteq \bT_{j_s} \bT_{j_{s+1}} \cdots \bT_{j_{k-1}} (f_{j_k,\xi}) \ \ \qquad \text{ for $k > s$,}\\
\rH_k &\seteq \bT^{-1}_{j_{s-1}} \bT^{-1}_{j_{s-2}} \cdots \bT^{-1}_{j_{k+1}} (f_{j_k,\xi-1})\quad \text{ for $k < s$,}
\end{align*}
and set 
\begin{align*}
\rG^{\bsd^+} &\seteq \rG_t^{d_t} \rG_{t-1}^{d_{t-1}} \cdots \rG_{s+1}^{d_{s+1}}     \qquad \text{ for $\bsd^+ \in  \Z_{\ge 0}^{\oplus (s,t]}$,}\\
\rH^{\bsd^-} &\seteq  \rH_{s-1}^{d_{s-1}} \rH_{s-2}^{d_{s-2}} \cdots \rH_{c}^{d_{c}}   \qquad \text{ for $\bsd^- \in  \Z_{\ge 0}^{\oplus [c,s)}$.}
\end{align*}
Note that Lemma \ref{Lem: codomain of T_i} says that 
\begin{align*} 
\rG^{\bsd^+} \in (\qbA{\ge \xi})_{j_s} \quad  \text{and} \quad \rH^{\bsd^-} \in \qbA{<\xi}.
\end{align*}
Applying $\bT_{j_{s-1}}^{-1} \bT_{j_{s-2}}^{-1} \cdots \bT_{j_{1}}^{-1} $ to \eqref{eq: F_P's F_Pt}, we have 
\begin{align*}
f_{j_s, \xi} \rG_{t} = \sum_{\bsd^+, \bsd^-, d_s} A_\bfd  \rG^{\bsd^+}  f_{j_s, \xi}^{d_s} \rH^{\bsd^-},
\end{align*}
where $\bsd = \bsd^- * (d_s) * \bsd^+ \in \Z_{\ge0}^{\oplus [c,t]} $ and $*$ is the concatenation.
By~\eqref{Eq: es e'i for f_i^m} and Corollary \ref{cor: prime operation}, we have 
\begin{align*}
\Bpair{ f_{j_s, \xi} \rG_{t}, \rG^{\bsd^+}  f_{j_s, \xi}^{d_s} \rH^{\bsd^-} } & = \Bpair{ \rG_{t}, \Ep_{j_s,\xi} \left( \rG^{\bsd^+}  f_{j_s, \xi}^{d_s} \rH^{\bsd^-} \right) } \\
&= \Bpair{ \rG_{t}, \Ep_{j_s,\xi} \left( \rG^{\bsd^+} \right)  f_{j_s, \xi}^{d_s} \rH^{\bsd^-}  }  +  B \Bpair{ \rG_{t},  \rG^{\bsd^+}  f_{j_s, \xi}^{d_s-1} \rH^{\bsd^-}  }
\end{align*}
where $ B = q_{j_s}^{ (-1)^{\xi} \langle h_{j_s}, \wt(\rG^{\bsd^+}) \rangle -d_s+1 } [d_s]_{j_s} (q_{j_s}^{-1}-q_{j_s})  $ if $d_s>0$ and $B=0$ otherwise.
By the orthogonality, the value of the above bilinear form is $0$ in one of the following cases:
\bna
\item $\bsd^- \ne (0,0,\ldots, 0) $,
\item $d_s > 1$.
\item $\bsd^- = (0,0,\ldots, 0) $, $d_s = 1$ and $ \rG^{\bsd^+} \ne \rG_{t} $.
\ee
Thus $A_\bsd = 0$ in the above cases.
In particular, when 
$\bsd^- = (0,0,\ldots, 0)$, $d_s = 1$ and $\rG^{\bsd^+} = \rG_{t}$,
Theorem \ref{Thm: orthonormal_reduced} and Corollary~\ref{cor: prime operation} tell us that
$$
\pair{ f_{j_s, \xi} \rG_{t}, \rG_{t}  f_{j_s, \xi} } =
\pair{  \rG_{t},  \Ep_{j_s, \xi} (\rG_{t}  f_{j_s, \xi}) } =
q_{j_s}^{ (-1)^\xi \langle h_{j_s}, \wt(\rG_t) \rangle  } (q_{j_s}^{-1}-q_{j_s})(q_{j_t}^{-1}-q_{j_t}),
$$
and hence 
$$
A_\bsd = \frac{ \pair{ f_{j_s, \xi} \rG_{t}, \rG_{t}  f_{j_s, \xi} } }{ \pair{ \rG_{t}  f_{j_s, \xi}, \rG_{t}  f_{j_s, \xi} } } = q_{j_s}^{ (-1)^\xi \langle h_{j_s}, \wt(\rG_t) \rangle  }.
$$
Since the bilinear form is invariant under the actions $\bT_i$, 
we have 
\begin{align} \label{eq: F_P's F_Pt 3}
\rF^{\pd'}_s \rF^{\pd'}_t =  q^{ - ( \wt(\rF^{\pd'}_s), \wt(\rF^{\pd'}_t) )  } \rF^{\pd'}_t \rF^{\pd'}_s +  
\sum_{\bsd } A_\bsd \rF_{\pd'}(\bsd),
\end{align}
where $\bsd = (d_k)$ runs over  $\Z_{\ge0}^{\oplus(s, t]}$.

 Applying $\bT_{j_{t-1}}^{-1} \bT_{j_{t-2}}^{-1} \cdots \bT_{j_{1}}^{-1} $ to \eqref{eq: F_P's F_Pt 3}, we have 
\begin{align*}
\sfR_s f_{j_t, \xi}  = 
q^{ - ( \wt(\rF^{\pd'}_s), \wt(\rF^{\pd'}_t) )  } f_{j_t, \xi} \sfR_s  +
\sum_{\bsd = \bsd' *d_t  \in \Z_{\ge0}^{\oplus(s, t]}} A_\bsd  f_{j_t, \xi}^{d_t}  \sfR^{\bfd'},
\end{align*}
where $ \sfR_k=  \bT^{-1}_{j_{t-1}} \bT^{-1}_{j_{t-2}} \cdots \bT^{-1}_{j_{k+1}} (f_{j_k,\xi-1}) \in \qbA{<\xi}$ for $s\le  k <t$ and 
$$\sfR^{\bsd'} \seteq \sfR_{t-1}^{d_{t-1}}\cdots  \sfR_{s+2}^{d_{s+2}}\sfR_{s+1}^{d_{s+1}}  \in \qbA{<\xi} \quad \text{ for } \bsd'=(d_{s+1},\ldots,d_{t-1}) \in \Z^{\oplus (s,t)}.$$ 
By Corollary \ref{cor: prime operation}, for any $\bsd'=(d_{s+1},\ldots,d_{t-1}) \in \Z^{\oplus (s,t)}$, we have
\begin{align*}
\Bpair{\sfR_s f_{j_t, \xi}, f_{j_t, \xi}^{d_t}  \sfR^{\bfd'} } &= \Bpair{\sfR_s, \Es_{j_t,\xi} \left(f_{j_t, \xi}^{d_t}  \sfR^{\bfd'} \right) }     \\
& =  \Bpair{\sfR_s, f_{j_t, \xi}^{d_t}  \Es_{j_t,\xi} \left(\sfR^{\bfd'} \right) } + B'   \Bpair{\sfR_s, f_{j_t, \xi}^{d_t-1}   \sfR^{\bfd'}  } \ \ \text{ for some } B' \in \bfk,
\end{align*}
which tells that $A_\bsd$ vanishes unless $d_t=0$ by the orthogonality. 
Thus, with~\eqref{eq: F_P's F_Pt 3}, we can conclude that
$$\rF^{\pd'}_s \rF^{\pd'}_t =  q^{ - ( \wt(\rF^{\pd'}_s), \wt(\rF^{\pd'}_t) )  } \rF^{\pd'}_t \rF^{\pd'}_s +  
\sum_{\bsd } A_\bsd \rF_{\pd'}(\bsd),$$ 
where $\bsd = (d_k)$ runs over $\Z_{\ge 0}^{\oplus(s, t)}$. 
Therefore, $\pd' = (\jj, \xi)$ satisfies Assumption \ref{Assu: pbw}~\eqref{it: a3}.  

\smallskip 

The remaining case  for $ s \in [1,c) $ and  $ t \in [c, c+\eta] $  can be proved in the same manner as above.
\end{proof}

\begin{corollary} \label{Cor: key}
Every PBW datum $\pd = (\ii, \xi)$ of $\gar^m$ satisfies {\rm Assumption \ref{Assu: pbw}}.
\end{corollary}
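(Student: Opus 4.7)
The plan is to combine Lemma~\ref{Lem: initial} with Theorem~\ref{Thm: key thm1} via a connectivity argument on $\Seq(\gar^m)$. First I would invoke Lemma~\ref{Lem: initial} to produce at least one locally reduced sequence $\ii_0 \in \Seq(\gar^m)$ for which the PBW datum $(\ii_0,\xi)$ is known to satisfy Assumption~\ref{Assu: pbw}, and this for every $\xi \in \Z$ simultaneously. The goal is then to propagate this property from $\ii_0$ to an arbitrary $\ii \in \Seq(\gar^m)$.

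Next I would appeal to the following classical fact about Artin--Tits monoids: any two positive expressions of the same element $\ttb \in \ttB^+$ are related by a finite sequence of braid moves, with no recourse to the inverse relations. This is possible because each defining braid move in \eqref{Eq: braid gp} equates two positive words of the same length, so the word problem in $\ttB^+$ is solved purely by these moves (this is the analogue of Matsumoto's theorem for the Artin monoid, and is already implicit in how $\Seq(\ttb)$ is used throughout the paper). Applied to $\ttb = \gar^m$, this yields a chain $\ii_0 = \jj_0, \jj_1, \ldots, \jj_n = \ii$ in $\Seq(\gar^m)$ in which each consecutive pair differs by a single braid move.

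Finally I would iterate Theorem~\ref{Thm: key thm1} along this chain with the fixed integer $\xi$: each transition $(\jj_{k-1},\xi) \rightsquigarrow (\jj_k,\xi)$ preserves the validity of Assumption~\ref{Assu: pbw}, so starting from $(\jj_0,\xi) = (\ii_0,\xi)$ we conclude that $(\ii,\xi)$ satisfies Assumption~\ref{Assu: pbw}. The only delicate point is the connectivity statement for positive braid words; everything else is routine bookkeeping along the chain, since Theorem~\ref{Thm: key thm1} is stated precisely so as to allow this induction.
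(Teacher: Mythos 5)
Your proof is correct and follows exactly the paper's argument: use Lemma~\ref{Lem: initial} to obtain one locally reduced starting sequence in $\Seq(\gar^m)$, note that any two positive expressions of $\gar^m$ are connected by a chain of braid moves, and then propagate Assumption~\ref{Assu: pbw} along the chain via Theorem~\ref{Thm: key thm1}. The paper's proof is a one-line version of precisely this reasoning.
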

\begin{proof}
Since any two elements in $\Seq(\ttb)$ of $\ttb \in \bg^+$ are related by braid relations, it follows from Theorem \ref{Lem: initial} and Theorem \ref{Thm: key thm1}.
\end{proof}

We now construct the PBW bases for $\qbA{}(\ttb)$ for arbitrary element $\ttb \in \bg^+$, which is the main theorem of the paper.

\begin{theorem} \label{Thm: main}
Let $\pd = (\ii, \xi)$ be a PBW datum of $\ttb \in \bg^+ $ and let $L = \ell(\ttb)$. 
For any $r,s \in [1,L]$ with $r \le s$, we have the following.
\bnum
\item The set 
$$ \sfP_{\pd;[r,s]} \seteq \left\{ \rF_{\pd}(\bsa) = \sfF_{s}^{a_s}\sfF_{s-1}^{a_{s-1}} \cdots \sfF_{r}^{a_r} \mid \bsa \in \Z_{\ge 0}^{\oplus [r,s]} \right\}$$ 
is a $\bR$-linear orthogonal basis of $\qbAu{[r, s],\pd}$ with respect to the bilinear form $\pair{ \ , \ }$.   
\item For any $\bst, \bst' \in \Z_{\ge0}^{\oplus [r,s]}$, we have 
$$
\pair{\rF_{\pd}(\bst), \rF_{\pd}(\bst')} = \prod_{k=1}^N \delta_{t_k, t_k'} q_{i_k}^{-t_{k}(t_{k}-1)/2} (q_{i_k}^{-1}-q_{i_k})^{t_k}  [t_{k}]_{i_k}!.
$$
\item  
For any $k, t \in [r,s] $ with $ k < t$, we have 
$$
\rF^{\pd}_k \rF^{\pd}_t - q^{ - ( \wt(\rF^{\pd}_k), \wt (\rF^{\pd}_t) ) } \rF^{\pd}_t \rF^{\pd}_k = \sum_{\bsc \in \Z_{\ge 0}^{\oplus(k, t) }} A_\bsc  \rF_{\pd}(\bsc) 
\quad \text{ for some $A_\bsc \in \bR$.}
$$
\ee
\end{theorem}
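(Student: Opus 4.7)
The plan is to deduce Theorem~\ref{Thm: main} from Corollary~\ref{Cor: key} via an extension argument that reduces an arbitrary $\ttb \in \bg^+$ to a power of the Garside element. By Corollary~\ref{Cor: braid}, there exist $\ttb' \in \bg^+$ and $m \in \Z_{\ge 0}$ with $\ttb\ttb' = \gar^m$. I would fix any $\ii' \in \Seq(\ttb')$, form the concatenated sequence $\tilde{\ii} = \ii * \ii' \in \Seq(\gar^m)$, and set $\tilde{\pd} = (\tilde{\ii}, \xi)$. Since the PBW root vector $\rF^{\tilde\pd}_k$ defined in~\eqref{Eq: PBW vectors} depends only on the first $k$ entries of $\tilde{\ii}$ together with $\xi$, we have $\rF^{\tilde\pd}_k = \rF^{\pd}_k$ for every $k \in [1,L]$, and consequently $\rF_{\tilde\pd}(\bsa) = \rF_{\pd}(\bsa)$ for all $\bsa$ supported in $[r,s]$.

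Next I would invoke Corollary~\ref{Cor: key} for $\tilde{\pd}$, which delivers Assumption~\ref{Assu: pbw} over the full interval $[1,\ell(\gar^m)]$. The pairing formula in item~(2) of the theorem then follows immediately from Assumption~\ref{Assu: pbw}~\eqref{it: a2} by restricting indices to $[r,s] \subseteq [1,L]$. For item~(3), applying Assumption~\ref{Assu: pbw}~\eqref{it: a3} to $k<t$ in $[r,s]$ produces a correction with support in the open interval $(k,t)$, which is contained in $(r,s) \subseteq [r,s]$; the identity therefore already takes place inside $\qbAu{[r,s],\pd}$ and yields the required LS-formula. Mutual orthogonality of the elements of $\sfP_{\pd;[r,s]}$, together with the nonvanishing of the diagonal pairings (which forces linear independence), is inherited by restriction from Assumption~\ref{Assu: pbw}~\eqref{it: a1} and~\eqref{it: a2}.

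The remaining task, and the main subtlety, is to show that $\sfP_{\pd;[r,s]}$ in fact spans $\qbAu{[r,s],\pd}$. Since the subalgebra is generated by $\rF^\pd_k$ for $k \in [r,s]$, it is enough to prove that the $\bR$-span of $\sfP_{\pd;[r,s]}$ is closed under left multiplication by each such generator. Given a PBW monomial $(\rF^\pd_v)^{a_v}\cdots(\rF^\pd_r)^{a_r}$ supported in $[r,s]$ and a generator $\rF^\pd_k$, I would apply the LS-formula established above to straighten $\rF^\pd_k$ past factors of larger index; the decisive point is that every correction term produced along the way remains supported in $[r,s]$, because LS corrections at indices $k'<t'$ in $[r,s]$ are supported in $(k',t') \subseteq [r,s]$. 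Termination of this straightening procedure follows from a standard induction on a well-founded ordering of monomials (for instance, lexicographic in the exponent vectors refined by total weight), mirroring the classical PBW argument for $\calU_q^-(\g)$. Thus the heart of the proof is this closure-under-multiplication step, while everything else amounts to a direct transfer from the already-established Garside case via the identification $\rF^{\tilde\pd}_k = \rF^{\pd}_k$ on the subinterval.
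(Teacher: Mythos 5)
Your proof is correct and takes essentially the same route as the paper: complete $\ttb$ to a Garside power $\gar^m$ via Corollary~\ref{Cor: braid}, invoke Corollary~\ref{Cor: key} for the extended datum, note that the PBW root vectors on $[1,L]$ are unchanged, and then transfer the result to the subinterval $[r,s]$. The paper packages the subinterval step as Lemma~\ref{Lem: PBW datum-orthogonal} (after a preliminary reduction to $r=1$ via Lemma~\ref{Lem: qA(pd, rs)}), whereas you spell out the underlying straightening argument explicitly; the content is the same.
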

\begin{proof}
Thanks to Lemma \ref{Lem: qA(pd, rs)}~\eqref{it: qA1}, we may assume that $r=1$.
By Corollary \ref{Cor: braid}, there exists $\ttb'\in \bg_\cmC^+$ and $m\in \Z_{\ge0}$ such that $\ttb\ttb' = \gar^m$. 
We set $\jj \seteq \ii* \ii'$ for some $\ii' \in \Seq(\ttb')$. Then $\pd' \seteq (\jj, \xi)$ is a PBW datum of $\gar^m$, and the assertion follows from Lemma \ref{Lem: PBW datum-orthogonal} and Corollary \ref{Cor: key}.
\end{proof}

\begin{corollary} \label{Cor: well-def}
Let $\ttb \in \bg^+$. For any $\ii, \jj \in \Seq(\ttb)$, we have $ \qbA{}(\ii) = \qbA{}(\jj)$.  
Thus, the algebra $\qbA{}(\ttb)$ does not depend on the choice of $\ii \in \Seq(\ttb)$.    
\end{corollary}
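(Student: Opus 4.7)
The plan is to reduce to the case of a single braid move. The positive monoid $\bg^+$ admits a monoid presentation whose only defining relations are the homogeneous braid relations \eqref{Eq: braid gp}, so two words in $\Seq(\ttb)$ necessarily have the same length and are connected by a finite chain of $m(i,j)$-braid moves; it therefore suffices to prove $\qbA{}(\ii)=\qbA{}(\jj)$ when $\jj$ differs from $\ii=(i_1,\ldots,i_L)$ by a single braid move at positions $[c,c+\eta]$, where $\eta=\eta(i_c,i_{c+1})$. Set $\pd=(\ii,0)$ and $\pd'=(\jj,0)$.

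I would then compare the PBW root vectors position by position. For $k<c$, the definition \eqref{Eq: PBW vectors} gives $\rF^{\pd}_k=\rF^{\pd'}_k$ verbatim. For $k>c+\eta$, Proposition \ref{Prop: Ti braid} yields the braid-group identity $\bT_{i_c}\bT_{i_{c+1}}\cdots\bT_{i_{c+\eta}}=\bT_{j_c}\bT_{j_{c+1}}\cdots\bT_{j_{c+\eta}}$, so combined with $i_k=j_k$ in that range one obtains $\bT_{i_1}\cdots\bT_{i_{k-1}}=\bT_{j_1}\cdots\bT_{j_{k-1}}$ and hence $\rF^{\pd}_k=\rF^{\pd'}_k$ again. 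Thus the two sets of generators agree outside the braided block. Inside the block, I would invoke Lemma \ref{Lem: qA(pd, rs)}\eqref{it: qA1}. Writing $\ii''=(i_c,\ldots,i_{c+\eta})$ and $\jj''=(j_c,\ldots,j_{c+\eta})$, and using $i_k=j_k$ for $k<c$, that lemma produces
$$\qbAu{[c,c+\eta],\pd}=\bT_{i_1}\cdots\bT_{i_{c-1}}\bigl(\qbA{}(\ii'')\bigr)\qtq\qbAu{[c,c+\eta],\pd'}=\bT_{i_1}\cdots\bT_{i_{c-1}}\bigl(\qbA{}(\jj'')\bigr).$$
Now $\ii''$ and $\jj''$ are precisely the two sides of the $m(i_c,i_{c+1})$-braid move, so the dihedral base case \eqref{Eq: braid qA} gives $\qbA{}(\ii'')=\qbA{}(\jj'')$, whence $\qbAu{[c,c+\eta],\pd}=\qbAu{[c,c+\eta],\pd'}$.

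Combining the two pieces, the total generating sets of $\qbA{}(\ii)$ and $\qbA{}(\jj)$ span the same $\bR$-subalgebra, so $\qbA{}(\ii)=\qbA{}(\jj)$. The argument is essentially bookkeeping: all substantive inputs — the dihedral base case \eqref{Eq: braid qA}, the prefix-covariance Lemma \ref{Lem: qA(pd, rs)}, and the braid relations for $\bT_i$ in Proposition \ref{Prop: Ti braid} — are already in place. The only conceptual ingredient is the connectedness of $\Seq(\ttb)$ by braid moves, which is immediate from the Artin-monoid presentation of $\bg^+$; no genuine obstacle arises.
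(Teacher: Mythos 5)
Your proof is correct and takes essentially the same approach as the paper's (implicit) reasoning. The corollary is stated without proof as an immediate consequence, and your key observation — that after reducing to a single braid move at positions $[c,c+\eta]$, the root vectors outside the block coincide (using Proposition~\ref{Prop: Ti braid}) and the block subalgebras $\qbAu{[c,c+\eta],\pd}$ and $\qbAu{[c,c+\eta],\pd'}$ agree (by Lemma~\ref{Lem: qA(pd, rs)} together with~\eqref{Eq: braid qA}) — is precisely the opening step of the proof of Theorem~\ref{Thm: key thm1}, correctly transported from the $\gar^m$ setting to a general $\ttb\in\bg^+$.
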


\begin{example} 
We keep all notations appeared in Example \ref{Ex: example1}.
Let $\bg_{A_2}$ be the braid group of type $A_2$ and let $\ttb = r_1r_2r_2r_1 \in \bg_{A_2}^+$. We set 
$$
\jj\seteq (1,2,2,1) \in \Seq(\ttb).
$$

\bnum

\item We consider the PBW datum $ \pdh = (\jj, 0)$. Then the PBW vectors are given as follows:  
\begin{align*}
\rF^{\pdh}_1 &= f_{1,0}, \\
\rF^{\pdh}_2 &= \bT_1(f_{2,0}) =  \cC^{-1} [f_{1,0}, f_{2,0}]_q \\
\rF^{\pdh}_3 &= \bT_1\bT_2(f_{2,0}) = \bT_1(f_{2,1}) =  \cC^{-1} [f_{1,1}, f_{2,1}]_q, \\
\rF^{\pdh}_4 &= \bT_1\bT_2\bT_2(f_{1,0}) = \cC^{-1}\bT_1\bT_2([f_{2,0}, f_{1,0}]_q) = \cC^{-1}([\bT_1\bT_2(f_{2,0}), \bT_1\bT_2(f_{1,0})]_q)  \\
&= \cC^{-2} [[f_{1,1}, f_{2,1}]_q, f_{2,0}]_q.
\end{align*}
By Theorem \ref{Thm: main}, the PBW vectors $\rF^{\pdh}_1$, $\rF^{\pdh}_2$, $\rF^{\pdh}_3$ and $\rF^{\pdh}_4$ generate the algebra $\qbA{}(\ttb)$ and the PBW monomials $\rF_{\pdh}(\bsa)$ form an orthogonal basis of $\qbA{}(\ttb)$.

\item For any $ k, t \in [1,4]$ with $k < t$,  we have  $[\rF^{\pdh}_k, \rF^{\pdh}_t  ]_q=0$ except the following two cases:
\bna
\item By Definition \ref{Def: extended qg}, we have 
$$
[\rF^{\pdh}_2, \rF^{\pdh}_3 ]_q = \bT_1 ( [f_{2,0}, f_{2,1} ]_q ) 
= \bT_1(1-q^2) = 1-q^2.
$$
\item Since $[ f_{1,0}, f_{2,1}]_q=0 $, we have 
$$
[f_{1,0}, [f_{1,1}, f_{2,1}]_q  ]_q = [[f_{1,0}, f_{1,1}]_q, f_{2,1}  ]_q = [1-q^2, f_{2,1}  ]_q = 0.
$$
We thus have 
\begin{align*}
[ \rF^{\pdh}_1,  \rF^{\pdh}_4]_q &= 
 \cC^{-2}[f_{1,0}, [[f_{1,1}, f_{2,1}]_q, f_{2,0}]_q ]_q \\
 &= q^{ (\al_1, \al_1 + \al_2)} \cC^{-2} [f_{1,1},f_{2,1} ]_q [f_{1,0}, f_{2,0}]_q  - q^{(\al_1+\al_2, \al_2)} \cC^{-2} [f_{1,0}, f_{2,0}]_q [f_{1,1}, f_{2,1}]_q \\ 
& = q \rF^{\pdh}_3 \rF^{\pdh}_2  - q \rF^{\pdh}_2 \rF^{\pdh}_3 \\
&= q^2(q^{-1}-q) \rF^{\pdh}_3 \rF^{\pdh}_2 - q^2(q^{-1}-q).
\end{align*}
\ee
\item Let $\ttb' = r_2r_2 \in \bg^+_{A_2}$ and set $\jj'\seteq (2,2)$. Then 
$$
\ttb \ttb' = r_1r_2r_2r_1 r_2r_2 = r_1r_2 r_1r_2 r_1 r_2 = \gar^2. 
$$
Note that  $ \jj * \jj' \in \Seq(\gar^2) $ and $ \jj * \jj'$ is obtained from $\ii$ by applying 3-braid move to the third position. Let $ \pdh'\seteq (\jj * \jj', 0 )$. Then we have the corresponding PBW vectors
\begin{align*}
\rF^{\pdh'}_1 &= f_{1,0}, \qquad \rF^{\pdh'}_2 = \cC^{-1} [f_{1,0}, f_{2,0}]_q, \qquad \rF^{\pdh'}_3 = \cC^{-1} [f_{1,1}, f_{2,1}]_q, \\
\rF^{\pdh'}_4 &= \cC^{-2} [[f_{1,1}, f_{2,1}]_q, f_{2,0}]_q, \qquad \rF^{\pdh'}_5 = f_{2,0}, \qquad \rF^{\pdh'}_6 = f_{2,1},
\end{align*}	
which tells us that
\bna
\item $\rF^{\pdh}_k  = \rF^{\pdh'}_k$ for $k=1,2,3,4$,
\item $\rF^{\pdh'}_k = \rF^{\pd}_k$ for $k=1,2,6$,
\item $\rF^{\pdh'}_3 = \rF^{\pd}_5$ and $\rF^{\pdh'}_5 = \rF^{\pd}_3$.
\ee
\ee

\end{example}

\bibliographystyle{amsplain}
\bibliography{ref}{}

\end{document}